\tikzset{aar/.style={->, thick}}
\tikzset{taar/.style={double, double equal sign distance, -implies}}
\tikzset{amar/.style={->, dotted}}
\tikzset{dmar/.style={->, dashed}}
\newcommand{\lab}[1]{$\scriptstyle #1$}
\newcommand{\co}{\nobreak\mskip2mu\mathpunct{}\nonscript
  \mkern-\thinmuskip{:}\penalty300\mskip6muplus1mu\relax}
\renewcommand{\th}{^{\text{th}}}
\newcommand{\lsub}[2]{{}_{#1}#2}
\newcommand{\lsup}[2]{{}^{#1}\mskip-.6\thinmuskip#2}
\def\mathcenter#1{%
  \vcenter{\hbox{$#1$}}%
}
\newcommand{\ZZ}{\mathbb{Z}}
\newcommand{\FF}{\mathbb{F}}
\newcommand{\Field}{\FF_2}
\newcommand{\op}{\mathrm{op}}
\newcommand{\Ainf}{A_\infty}
\newcommand{\Pin}{\mathit{Pin}}
\newcommand{\Spin}{\mathit{Spin}}
\newcommand{\bdy}{\partial}
\DeclareMathOperator{\gr}{gr}
\DeclareMathOperator{\Cone}{Cone}
\newcommand{\dg}{\textit{dg }}
\newcommand{\Id}{\mathrm{Id}}
\newcommand{\bId}{\mathbb{I}}
\DeclareMathOperator{\coker}{coker}
\DeclareMathOperator{\Mor}{Mor}
\DeclareMathOperator{\Hom}{Hom}
\DeclareMathOperator{\rank}{rank}
\newcommand{\HF}{\mathit{HF}}
\newcommand{\HFa}{\widehat{\mathit{HF}}}
\newcommand{\CF}{{\mathit{CF}}}
\newcommand{\CFa}{\widehat{\mathit{CF}}}
\newcommand{\Alg}{\mathcal{A}}
\newcommand{\Blg}{\mathcal{B}}
\newcommand{\CFD}{\mathit{CFD}}
\newcommand{\CFDD}{\mathit{CFDD}}
\newcommand{\CFA}{\mathit{CFA}}
\newcommand{\CFDA}{\mathit{CFDA}}
\newcommand{\CFDAa}{\widehat{\CFDA}}
\newcommand{\CFAA}{\mathit{CFAA}}
\newcommand{\CFAAa}{\widehat{\CFAA}}
\newcommand{\CFDa}{\widehat{\CFD}}
\newcommand{\CFAa}{\widehat{\CFA}}
\newcommand{\CFDDa}{\widehat{\CFDD}}
\newcommand{\CFDI}{\mathit{CFDI}}
\newcommand{\CFAI}{\mathit{CFAI}}
\newcommand{\CFDIa}{\widehat{\CFDI}}
\newcommand{\CFAIa}{\widehat{\CFAI}}
\newcommand{\DD}{$\mathit{DD}$}
\newcommand{\DA}{$\mathit{DA}$}
\newcommand{\AAm}{$\mathit{AA}$}
\newcommand{\DT}{\boxtimes}
\newcommand{\CFI}{\mathit{CFI}}
\newcommand{\CFIa}{\widehat{\CFI}}
\newcommand{\HFI}{\mathit{HFI}}
\newcommand{\HFIm}{\mathit{HFI}^{-}}
\newcommand{\HFIa}{\widehat{\HFI}}
\newcommand{\conj}[1]{\overline{\overline{#1}}}
\newcommand{\bunderline}[1]{\underline{#1\mkern-2mu}\mkern2mu }
\def\du {\bar{d}}
\def\dl {\bunderline{d}}
\newcommand{\HD}{\mathcal{H}}
\newcommand{\alphas}{\boldsymbol{\alpha}}
\newcommand{\betas}{\boldsymbol{\beta}}
\newcommand{\x}{\mathbf{x}}
\newcommand{\HB}{H}
\newcommand{\spinc}{\mathfrak{s}}
\newcommand{\SpinC}{\mathrm{spin}^c}
\newcommand{\spinC}{\mathit{Spin}^c}
\renewcommand{\Spin}{\mathrm{spin}}
\newcommand{\Gens}{\mathfrak{S}}
\newcommand{\PMC}{\mathcal{Z}}
\newcommand{\AZ}{\mathsf{AZ}}
\newcommand{\bAZ}{\overline{\AZ}}
\newcommand{\AZbar}{\bAZ}
\newcommand{\Chord}{\mathrm{chord}}
\DeclareMathOperator{\Sym}{Sym}
\newcommand{\ModCat}{\mathsf{Mod}}
\newcommand{\MCG}{\mathit{MCG}}
\newcommand{\nbd}{\mathrm{nbd}}
\theoremstyle{plain}
\numberwithin{equation}{section}
\newtheorem{theorem}[equation]{Theorem}
\newtheorem{proposition}[equation]{Proposition}
\newtheorem{lemma}[equation]{Lemma}
\newtheorem{corollary}[equation]{Corollary}
\newtheorem{conjecture}[equation]{Conjecture}
\newtheorem{convention}[equation]{Convention}
\newtheorem{definition}[equation]{Definition}
\theoremstyle{definition}
\theoremstyle{remark}
\newtheorem{example}[equation]{Example}
\newtheorem{remark}[equation]{Remark}
\definecolor{darkgreen}{rgb}{0,.25,0}
\definecolor{darkred}{rgb}{.25,0,0}
\newcommand{\greenit}[1]{\textcolor{darkgreen}{#1}}
\newcommand{\orangeit}[1]{\textcolor{darkred}{#1}}
\providecommand\@dotsep{5}
\def\listtodoname{List of Todos}
\def\listoftodos{\@starttoc{tdo}\listtodoname}
\begin{document}

\title{Involutive bordered Floer homology}

\author{Kristen Hendricks}
 \address{Mathematics Department, Michigan State University\\
   East Lansing, MI 48824}
   \thanks{\texttt{KH was supported by NSF Grant DMS-1663778.}}
\email{\href{mailto:hendricks@math.msu.edu}{hendricks@math.msu.edu}}

\author{Robert Lipshitz}
\thanks{\texttt{RL was supported by NSF Grant DMS-1642067.}}
\address{Department of Mathematics, University of Oregon, Eugene, OR 97403}
\email{\href{mailto:lipshitz@uoregon.edu}{lipshitz@uoregon.edu}}

\subjclass[2010]{Primary 57R58, Secondary 57M27}

\date{}

\dedicatory{}

\begin{abstract}
  	We give a bordered extension of involutive $\HFa$ and use it to give an algorithm to compute involutive $\HFa$ for general $3$-manifolds. We also explain how the mapping class group action on $\HFa$ can be computed using bordered Floer homology. As applications, we prove that involutive $\HFa$ satisfies a surgery exact triangle and compute $\HFIa(\Sigma(K))$ for all 10-crossing knots $K$.
\end{abstract}

\maketitle


\tableofcontents

\section{Introduction}\label{sec:intro}
In 2013, Manolescu introduced a $\Pin(2)$-equivariant version of Seiberg-Witten Floer homology and used it to resolve the Triangulation Conjecture~\cite{Manolescu16:triangulation}. Since then, several authors have given applications of these new invariants, particularly to the homology cobordism group~\cite{Manolescu14:intersection,Lin15:KO,Stoffregen:SFS,Stoffregen:sum,Stoffregen:remark}.
F.~Lin also gave a reformulation of $\Pin(2)$-equivariant
Seiberg-Witten Floer homology, and deduced a number of formal properties, such as a surgery exact triangle, in addition to various applications~\cite{Lin:MB-Floer,Lin:Exact,Lin:higher-comp,Lin:correction,Lin:involutive-Kh}.

Two years later, Manolescu and the first author introduced a shadow of
$\Pin(2)$-equivariant Seiberg-Witten Floer homology, called
\emph{involutive Heegaard Floer homology}~\cite{HM:involutive}, in
Ozsv\-\'ath-Szab\'o's Heegaard Floer
homology~\cite{OS04:HolomorphicDisks}. Involutive Heegaard Floer
homology has also had a number of applications, again mainly to the homology cobordism group~\cite{HMZ:involutive-sum,BH:cuspidal,DM:involutive-plumbed, Zemke:KnotSum}. 

As described below, a key step in the definition of involutive Heegaard Floer homology is naturality of the Heegaard Floer invariants~\cite{OS06:HolDiskFour,JT:Naturality}. Another implication of naturality is that the mapping class group of a $3$-manifold $Y$ acts on the Heegaard Floer invariants of $Y$; this action has been studied relatively little.

Bordered Heegaard Floer homology, introduced by Ozsv\'ath, Thurston,
and the second author, extends the Heegaard Floer invariant $\HFa(Y)$
to $3$-manifolds with boundary~\cite{LOT1,LOT2}, leading to a practical algorithm for computing $\HFa(Y)$~\cite{LOT4}. 
In this paper we extend that algorithm to compute both the hat variant
of involutive Heegaard Floer homology and the mapping class group
action on $\HFa(Y)$. Although the two actions are different, their
description in terms of bordered Floer homology are quite similar. We also prove a (hitherto unknown) surgery exact triangle for the hat variant of involutive Heegaard Floer homology. In the rest of the introduction we recall some of the definitions and sketch how these algorithms work.

Given a 3-manifold $Y$, the minus (respectively hat) involutive Heegaard Floer complex of $Y$ is defined as follows~\cite{HM:involutive}. Fix a pointed Heegaard diagram $\HD$ for $Y$. Recall that $\CF^-(\HD)$ (respectively $\CFa(\HD)$) is a chain complex of free $\Field[U]$-modules (respectively $\Field$-vector spaces). Consider the modules
\begin{align*}
\CFI^-(\HD)&=\CF^-(\HD)[-1]\otimes_{\Field[U]}\Field[U,Q]/(Q^2)\\
\CFIa(\HD)&=\CFa(\HD)[-1]\otimes_{\Field}\Field[Q]/(Q^2),
\end{align*}
over $\Field[U,Q]/(Q^2)$ (respectively $\Field[Q]/(Q^2)$), where $Q$ has degree $-1$ and $U$ has degree $-2$. Define a differential on $\CFI^-(\HD)$ and $\CFIa(\HD)$ by 
\begin{equation}\label{eq:CFI-diff}
\bdy_{\CFI}(x)=\bdy_{\CF}(x)+[x+\iota(x)]Q,
\end{equation}
where $\bdy_{\CF}(x)$ is the usual differential on $\CF^-(\HD)$ or $\CFa(\HD)$ and $\iota$ is an endomorphism of $\CF^-(\HD)$ or $\CFa(\HD)$ defined as follows. Let $\conj{\HD}$ be the result of exchanging the roles of the $\alpha$- and $\beta$-circles and reversing the orientation of the Heegaard surface; i.e., if
\begin{align*}
\HD&=(\Sigma,\alphas,\betas,z)\\
\intertext{then}
\conj{\HD}&=(-\Sigma,\betas,\alphas,z).
\end{align*}

Given a generator $\x=\{x_i\in\alpha_i\cap
\beta_{\sigma(i)}\}\subset\Sigma$ for $\CF^-(\HD)$ (respectively
$\CFa(\HD)$), exactly the same set of points gives a generator
$\eta(\x)$ for $\CF^-(\conj{\HD})$. For suitable choices of almost complex structures on $\Sym^g(\Sigma)$ and $\Sym^g(-\Sigma)$, the map $\eta$ is a chain isomorphism. Next, since $\HD$ and $\conj{\HD}$ both represent $Y$, there is a sequence of Heegaard moves from $\conj{\HD}$ to $\HD$. There is then a corresponding chain homotopy equivalence $\Phi\co \CF^-(\conj{\HD})\to\CF^-(\HD)$ (respectively $\Phi\co\CFa(\conj{\HD})\to\CFa(\HD)$) associated to this sequence of Heegaard moves (together with changes of almost complex structures)~\cite{OS04:HolomorphicDisks}; the map $\Phi$ is well-defined up to chain homotopy~\cite{OS06:HolDiskFour,JT:Naturality,HM:involutive}. Then
\[
  \iota=\Phi\circ\eta.
\]
Formula~\eqref{eq:CFI-diff} makes $\CFI^-(\HD)$ (respectively $\CFIa(\HD)$) into a differential $\Field[U,Q]/(Q^2)$-module (respectively $\Field[Q]/(Q^2)$-module), and hence the homology $\HFI^-(\HD)$ (respectively $\HFIa(\HD)$) is also a module over $\Field[U,Q]/(Q^2)$ (respectively $\Field[Q]/(Q^2)$).

In this paper, we will focus mainly on $\CFIa(\HD)$ and its homology $\HFIa(\HD)$. Up to isomorphism, the homology groups $\HFIa(\HD)$ are determined by the induced map $\iota_*\co \HFa(\HD)\to\HFa(\HD)$ on homology:
\[
\HFIa(\HD)\cong(\ker(\Id+\iota_*)\oplus Q\coker(\Id+\iota_*))[-1]
\]
with the obvious $\Field[Q]/(Q^2)$-module structure (e.g., if $x\in\ker(\Id+\iota_*)\subset \HFa(\HD)$ then $Qx$ is the image of $x$ in $\coker(\Id+\iota_*)$).

Before explaining how to compute involutive Heegaard Floer homology, we review the bordered algorithm to compute $\HFa(Y)$~\cite{LOT4}. (This was not the first algorithm to compute $\HFa(Y)$, which was discovered by Sarkar-Wang~\cite{SarkarWang07:ComputingHFhat}.) Choosing a Heegaard splitting of $Y$ allows us to write $Y$ as a union of two (standard) handlebodies $H_g$ of genus $g$, glued by a diffeomorphism $\psi\co \Sigma_g\to\Sigma_g$ of their boundaries. Let $\PMC$ be the split, genus $g$ pointed matched circle~\cite[Figure 4]{LOT4}, and $F(\PMC)$ the corresponding surface. 
Let $\phi_0\co F(\PMC)\to \bdy H_g$ be the $0$-framed parametrization~\cite[Section 1.4.1]{LOT4}. Then 
\begin{equation}\label{eq:compute-CFa}
  \CFa(Y)\simeq \CFAa(H_g,\phi_0)\DT_{\Alg(\PMC)}\CFDa(H_g,\phi_0\circ\psi).
\end{equation}
The bordered modules $\CFAa(H_g,\phi_0)$ and $\CFDa(H_g,\phi_0)$ can be described explicitly; see Section~\ref{sec:comp-hb}. Further, if $\CFDAa(\psi)$ is the type \DA\ bordered bimodule associated to the mapping cylinder of $\psi$ then
\[
  \CFDa(H_g,\phi_0\circ\psi)\simeq \CFDAa(\psi)\DT_{\Alg(\PMC)}\CFDa(H_g,\phi_0).
\]
One factors $\psi$ as a composition $\psi=\psi_1\circ\dots\circ\psi_n$ where each $\psi_i$ is an arcslide~\cite[Section 2.1]{LOT4}. Then
\[
  \CFDAa(\psi)\simeq \CFDAa(\psi_1)\DT\cdots\DT\CFDAa(\psi_n).
\]
The type \DD\ bimodule $\CFDDa(\psi_i)$ associated to each arcslide can be described explicitly~\cite[Section 4]{LOT4}. The type \DA\ bimodule $\CFDAa(\psi_i)$ can be computed as
\begin{align*}
  \CFDAa(\psi_i)&\simeq \CFAAa(\bId)\DT_{\Alg(-\PMC)}\CFDDa(\psi_i),\\
  \CFAAa(\bId)&\simeq \CFAAa(\AZ\cup\bAZ),
\end{align*}
and $\AZ\cup\bAZ$ is a particular nice bordered Heegaard diagram
introduced by Auroux and
Zarev~\cite{Auroux10:Bordered,Zarev:JoinGlue,LOTHomPair} (see
Section~\ref{sec:AZ}), whose type \AAm\ bimodule is, consequently,
easy to describe.

Combining these steps gives an algorithm to compute $\CFa(Y)$. This algorithm is practical, at least for manifolds with small Heegaard genus and not-too-complicated gluing maps~\cite[Section 9.5]{LOT4}. Further improvements have been made by Zhan~\cite{Zhan14:thesis}.

The other key tools for computing involutive Heegaard Floer homology come from
earlier work on dualities in bordered Heegaard Floer
homology~\cite{LOTHomPair}. Recall that a bordered Heegaard diagram
consists of an oriented surface-with-boundary $\Sigma$, a collection
$\alphas$ of arcs and circles in $\Sigma$, a collection $\betas$ of
circles in $\Sigma$, and a basepoint $z$ in $\bdy\Sigma$ satisfying
certain conditions~\cite[Section 4.1]{LOT1}. We can also consider a
\emph{$\beta$-bordered} Heegaard diagram, in which $\alphas$ consists
only of circles and $\betas$ consists of arcs and
circles~\cite[Section 3.1]{LOTHomPair}. Given a bordered Heegaard diagram $\HD$,
there is an associated $\beta$-bordered Heegaard diagram $\HD^\beta$,
obtained by exchanging the roles of the $\alpha$- and $\beta$-curves
in $\HD$. The boundary of a $\beta$-bordered Heegaard diagram is a
\emph{$\beta$-pointed matched circle}. Given a pointed matched circle
$\PMC$, let $\PMC^\beta$ be the corresponding $\beta$-pointed matched
circle. Another operation on bordered Heegaard diagrams (respectively
pointed matched circles) is reversal of the orientation of the
Heegaard surface (respectively circle); we will denote this with a
minus sign. Given a Heegaard diagram $\HD$ with boundary $\PMC$,
the invariants of these objects are related as follows:
\begin{align*}
  \Alg(\PMC^\beta)&=\Alg(\PMC)^\op=\Alg(-\PMC)\\
  \lsup{\Alg(-\PMC^\beta)}\CFDa(\HD^\beta)&=\lsup{\Alg(\PMC)}\CFDa(\HD^\beta)\cong\overline{\lsup{\Alg(-\PMC)}\CFDa(\HD)}\\
  \lsup{\Alg(\PMC)}\CFDa(-\HD)&=\CFDa(-\HD)^{\Alg(\PMC)}\cong\overline{\lsup{\Alg(-\PMC)}\CFDa(\HD)}\\
  \CFAa(\HD^\beta)_{\Alg(\PMC^\beta)}&=\CFAa(\HD^\beta)_{\Alg(-\PMC)}\cong\overline{\CFAa(\HD)_{\Alg(\PMC)}}\\
  \CFAa(-\HD)_{\Alg(-\PMC)}&=\lsub{\Alg(\PMC)}\CFAa(-\HD)\cong\overline{\CFAa(\HD)_{\Alg(\PMC)}},
\end{align*}
where the overline denotes the dual $\Ainf$-module or type
$D$ structure~\cite{LOTHomPair}. As usual in the bordered Floer
literature, we are using superscripts to denote type $D$ structures
and subscripts for $\Ainf$ actions.

Given a bordered Heegaard diagram $\HD$ with boundary $\PMC$, let
$\conj{\HD}=-\HD^\beta$, so $\conj{\HD}$ is a $\beta$-bordered
Heegaard diagram with boundary $\conj{\PMC}=-\PMC^\beta$. From the
isomorphisms above, it follows that:
\begin{align*}
  \lsup{\Alg(-\PMC)}\CFDa(\conj{\HD})&\cong\lsup{\Alg(-\PMC)}\CFDa(\HD) &
  \CFAa(\conj{\HD})_{\Alg(\PMC)}&\cong \CFAa(\HD)_{\Alg(\PMC)}
\end{align*}
These are the analogues of the isomorphism $\eta$ in the definition of
$\CFI$, and we will denote these isomorphisms by $\eta$ as well. In
particular, it is immediate from the proofs of the isomorphisms
(see~\cite{LOTHomPair}) that the isomorphism $\eta$ takes a generator
$\x\subset \alphas\cap\betas\subset \Sigma$ to the same subset of
$\Sigma$.

The second ingredient in the definition of $\CFI$ is relating
$\conj{\HD}$ and $\HD$ by a sequence of Heegaard moves. In the
bordered setting this is not possible: $\conj{\HD}$ is
$\beta$-bordered while $\HD$ is $\alpha$-bordered. The
Auroux-Zarev piece $\AZ$ comes to the rescue. Specifically, if we glue
$\AZ$ (respectively $\bAZ$) to $\conj{\HD}$ along the $\beta$-boundary of
$\AZ$ or $\bAZ$ then we have
\[
  \conj{\HD}\cup_\bdy\AZ\sim \HD\sim \conj{\HD}\cup_\bdy\bAZ
\]
\cite[Lemma 4.6]{LOTHomPair} (where $\sim$ means the diagrams are related by a sequence of bordered Heegaard moves or, equivalently, represent the same bordered $3$-manifold).

Now, fix bordered Heegaard diagrams $\HD_0,\HD_1$ with
$\bdy\HD_0=\PMC=-\bdy\HD_1$. Let $Y=Y(\HD_0\cup_\bdy\HD_1)$ be the
closed $3$-manifold represented by $\HD_0\cup_\bdy\HD_1$.  We show in
Theorem~\ref{thm:iota-right}
that, up to homotopy, the involution $\iota$ on $\CFa(Y)$ is the
composition of the following maps:
\begin{equation}\label{eq:decomp-iota}
\begin{split}
  \CFa(Y)&\simeq \CFAa(\HD_0)_{\Alg(\PMC)}\DT\lsup{\Alg(\PMC)}\CFDa(\HD_1)\\
  &\stackrel{\eta}{\longrightarrow}\CFAa(\conj{\HD_0})_{\Alg(\PMC)}\DT\lsup{\Alg(\PMC)}\CFDa(\conj{\HD_1})\\
  &=\CFAa(\conj{\HD_0})_{\Alg(\PMC)}\DT\lsup{\Alg(\PMC)}[\Id_{\Alg(\PMC)}]_{\Alg(\PMC)}\DT\lsup{\Alg(\PMC)}\CFDa(\conj{\HD_1})\\
   &\stackrel{\Omega_1}{\longrightarrow} \CFAa(\conj{\HD_0})_{\Alg(\PMC)}\DT\lsup{\Alg(\PMC)}\CFDAa(\bId_{\PMC})_{\Alg(\PMC)}\DT\lsup{\Alg(\PMC)}\CFDa(\conj{\HD_1})\\
  &\stackrel{\Omega_2}{\longrightarrow} 
    \CFAa(\conj{\HD_0})_{\Alg(\PMC)}\DT\lsup{\Alg(\PMC)}\CFDAa(\bAZ)_{\Alg(\PMC)}\DT\lsup{\Alg(\PMC)}\CFDAa(\AZ)_{\Alg(\PMC)}\DT \lsup{\Alg(\PMC)}\CFDa(\conj{\HD_1})\\
  &\stackrel{\Psi=\Psi_0\DT\Psi_1}{\longrightarrow}\CFAa(\HD_0)_{\Alg(\PMC)}\DT\lsup{\Alg(\PMC)}\CFDa(\HD_1)\\
  &\simeq \CFa(Y).
\end{split}
\end{equation}
Here, $\lsup{\Alg(\PMC)}[\Id_{\Alg(\PMC)}]_{\Alg(\PMC)}$ is the (type \DA) \emph{identity bimodule} of $\Alg(\PMC)$, i.e., the identity for the operation $\DT$~\cite[Definition 2.2.48]{LOT2}, while $\bId_{\PMC}$ is the standard bordered
Heegaard diagram for the identity map of $F(\PMC)$.
The map $\Omega_1$ is induced by a homotopy equivalence between $[\Id_{\Alg(\PMC)}]$ and $\CFDAa(\bId_{\PMC})$, while $\Omega_2$ is
induced by a sequence of Heegaard moves from $\bId_{\PMC}$ to the bordered
Heegaard diagram $\bAZ\cup\AZ$. The map $\Psi_0$ is induced by a
sequence of Heegaard moves from $\conj{\HD_0}\cup \bAZ$ to $\HD_0$ and the map $\Psi_1$ is induced by a sequence of Heegaard moves
from $\AZ\cup\conj{\HD_1}$ to $\HD_1$.

To give an algorithm to compute $\HFIa(Y)$ we restrict to the case that the $\HD_i$ come from a Heegaard splitting of $Y$. As discussed above, we can compute $\CFAa(\HD_0)$ and $\CFDa(\HD_1)$ in this case. Further, the diagrams $\AZ$ and $\bAZ$ are nice (both in the technical and colloquial sense) and so it is routine to compute $\CFDAa(\AZ)$ and $\CFDAa(\bAZ)$. We write down these bimodules explicitly in Section~\ref{sec:AZ}. To compute $\HFIa(Y)$ it remains to compute the maps $\Omega=\Omega_2\circ\Omega_1$ and $\Psi=\Psi_0\DT\Psi_1$. It turns out that both are determined by being the unique graded homotopy equivalences of the desired form; this is explained in Section~\ref{sec:rigid} (Lemmas~\ref{lem:HB-rigid} and~\ref{lem:MCG-rigid}). In particular, one never needs to compute $\CFDAa(\Id_{\PMC})$. (These rigidity results were first observed in unpublished work of Ozsv\'ath, Thurston, and the second author, and parallel results in Khovanov homology \cite{Khovanov06:cobordism}.)

An arguably even nicer description of $\iota$, in terms of morphisms
complexes, is given in Section~\ref{sec:hom-pair}.

Changing topics slightly, given a closed $3$-manifold $Y$, the based
mapping class group of $Y$ acts on
$\HFa(Y)$~\cite{OS06:HolDiskFour,JT:Naturality}. One can use bordered
Floer homology to compute the mapping class group action in a similar
way to $\HFIa$, so we explain that algorithm here as
well. (We are interested in this action partly because it sometimes
allows one to compute the concordance invariant
$q_\tau$~\cite{HLS:HEquivariant}.)

So, fix a closed $3$-manifold $Y$, a basepoint $p\in Y$, and a mapping
class $[\chi]\in\MCG(Y,p)$. We can choose a Heegaard splitting
$Y=\HB_0\cup_F\HB_1$ for $Y$ and a representative $\chi$ for
$[\chi]$ so that $\chi$ respects the Heegaard splitting, i.e.,
$\chi(\HB_i)=\HB_i$ (Lemma~\ref{lem:preserve-HS}). Let $\psi$ denote
the gluing map for the Heegaard splitting, so $\CFa(Y)$ is computed by
Equation~\eqref{eq:compute-CFa}, and we know how to compute
$\CFAa(H_g,\phi_0)$ and $\CFDa(H_g,\phi_0\circ\psi)$. Let $\chi|_F$
denote the restriction of $\chi$ to $F$. As described above, we
can also compute $\CFDAa(\chi|_F)$. Since $\chi|_F$ extends over $\HB_i$,
the bordered manifolds $(H_g,\phi_0)$ and
$(H_g,\phi_0\circ\chi|_F^{-1})$ are equivalent, as are the bordered
manifolds $(H_g,\phi_0\circ\psi)$ and
$(H_g,\phi_0\circ\psi\circ\chi|_F^{-1})$. Thus, there are
(grading-preserving) chain homotopy equivalences
\begin{align*}
  \CFAa(H_g,\phi_0)\DT \CFDAa(\chi|_F)&\stackrel{\Theta_0}{\longrightarrow}\CFAa(H_g,\phi_0)\\
  \CFDAa(\chi|_F^{-1})\DT \CFDa(H_g,\phi_0\circ\psi)&\stackrel{\Theta_1}{\longrightarrow}\CFDa(H_g,\phi_0\circ\psi).
\end{align*}
In fact, we show in Section~\ref{sec:rigid} that there are unique
graded homotopy equivalences $\Theta_0$ and $\Theta_1$ between these
modules (up to homotopy), so $\Theta_0$ and $\Theta_1$ are
algorithmically computable (cf.\ Section~\ref{sec:comp-htpy-equiv}).
We show in Theorem~\ref{thm:MCG-act-is}
that the action of $\chi$ on $\HFa(Y)$ is given by the composition
\begin{equation}\label{eq:MCG-act-is}
\begin{split}
  \CFa(Y)&\simeq \CFAa(H_g,\phi_0)\DT\CFDa(H_g,\phi_0\circ\psi)\\
  &=\CFAa(H_g,\phi_0)\DT[\Id_{\Alg(\PMC)}]\DT\CFDa(H_g,\phi_0\circ\psi)\\
  &\longrightarrow
    \CFAa(H_g,\phi_0)\DT\CFDAa(\chi|_F)\DT\CFDAa(\chi|_F^{-1})\DT\CFDa(H_g,\phi_0\circ\psi)\\
  &\stackrel{\Theta_0\DT\Theta_1}{\longrightarrow}
    \CFAa(H_g,\phi_0)\DT\CFDa(H_g,\phi_0\circ\psi)\\
  &\simeq\CFa(Y)
\end{split}
\end{equation}
for an appropriate homotopy equivalence
$[\Id_{\Alg(\PMC)}]\to \CFDAa(\chi|_F)\DT\CFDAa(\chi|_F^{-1})$. Again, there is a
unique such homotopy equivalence, so this map is computable.

The paper has two more contents. In Section~\ref{sec:ibF} we give a definition of \emph{involutive bordered Floer homology}, which describes succinctly what information one needs to compute about a bordered $3$-manifold in order to recover $\HFIa$ of gluings. In Section~\ref{sec:triangle} we use this description to prove a surgery exact triangle for involutive Heegaard Floer homology. (Previously, Lin proved that $\Pin(2)$-equivariant monopole Floer homology admits a surgery exact triangle~\cite[Theorem 1]{Lin:Exact}, but surgery triangles for involutive Heegaard Floer homology have so far been elusive.) 

This paper is organized as follows. In Section~\ref{sec:background} we
collect the results we need from the bordered Floer literature. Section~\ref{sec:comp-htpy-equiv} notes that, given two explicit, finitely generated type $D$, $A$, or \DA\ bimodules over the bordered algebras, computing the set of homotopy equivalences between them can be done algorithmically. The rigidity results---that there is a unique isomorphism between type $D$ or $A$ modules for the same bordered handlebody, and between type \DD, \DA, or \AAm\ modules for the same mapping cylinder---are proved in Section~\ref{sec:rigid}. The fact that Formula~\eqref{eq:decomp-iota} computes the map $\iota$ is proved in Section~\ref{sec:ibF}, which also proposes a general definition of involutive bordered Floer homology. Section~\ref{sec:MCG} shows that Formula~\eqref{eq:MCG-act-is} computes the mapping class group action on $\HFa$. The proof of the surgery triangle is in Section~\ref{sec:triangle}. Another computation of $\iota$, entirely in terms of type $D$ modules, is given in Section~\ref{sec:hom-pair}. We conclude with computer computations for the branched double covers of 10-crossing knots, in Section~\ref{sec:examples}.

\emph{Acknowledgments.} We thank Nick Addington, Tony Licata, Tye
Lidman, Ciprian Ma\-no\-les\-cu, Peter Ozsv\'ath, and Dylan Thurston
for helpful conversations. In particular, the results in
Section~\ref{sec:rigid} were first recorded in an unpublished paper of
Ozsv\'ath, Thurston, and the second author. Finally, we thank the referee for helpful suggestions.

\section{Background}\label{sec:background}
We assume the reader has a passing familiarity with bordered Heegaard Floer homology. The review in this section is focused on fixing notation and recalling some of the less well-known aspects of the theory such as gradings and the Auroux-Zarev diagram.

\subsection{The split pointed matched circle and its algebra}
Let $\PMC_k$ denote the \emph{split pointed matched circle} for a surface of genus $k$. That is, $\PMC_k=(Z,\{a_1,\dots,a_{4k}\},M,z)$ where $M$ matches $a_{4i+1}\leftrightarrow a_{4i+3}$, and $a_{4i+2}\leftrightarrow a_{4i+4}$, for $i=0,\dots,k-1$. Note that the matched pairs in $\PMC_k$ are in canonical bijection with $\{1,\dots,2k\}$, by identifying $\{a_{4i+1},a_{4i+3}\}\mapsto 2i+1$ and $\{a_{4i+2},a_{4i+4}\}\mapsto 2i+2$. 

The algebra $\Alg(\PMC_k)$ has a canonical $\Field$-basis of
\emph{strand diagrams}, and decomposes as a direct sum
\[
\Alg(\PMC_k)=\bigoplus_{i=-k}^k\Alg(\PMC,i).
\]
The integer $i$ denotes the \emph{weight} or \emph{$\SpinC$-structure} of a
strand diagram, which is the number of non-horizontal strands plus
half the number of horizontal strands minus $k$~\cite[Definition 3.23]{LOT1}. Only the summand $\Alg(\PMC_k,0)$ will be relevant in this paper, and we will often abuse notation and let $\Alg(\PMC_k)$ denote $\Alg(\PMC_k,0)$.

It will be convenient to have names for certain elements of
$\Alg(\PMC_k)$. Given a subset $\mathbf{s}\subset \{1,\dots,2k\}$ with
cardinality $k$ there is a corresponding basic idempotent
$I(\mathbf{s})\in \Alg(\PMC_k,0)$. Next, for $1\leq i<j\leq 4k$ let
$\rho_{i,j}$ be the chord from $a_i$ to $a_j$. There is a
corresponding algebra element $a(\rho_{i,j})\in\Alg(\PMC_k,0)$, the
sum of all strand diagrams obtained by adding $2k-2$ horizontal
strands to $\rho_{i,j}$ in any allowed way. To keep notation simple, we will often denote $a(\rho_{i,j})$ by $\rho_{i,j}$.

In the special case that $k=1$, $\Alg(\PMC_1,0)$ has $8$ generators: $I(1)$, $I(2)$, $\rho_{1,2}$, $\rho_{2,3}$, $\rho_{3,4}$, $\rho_{1,3}$, $\rho_{2,4}$, and $\rho_{1,4}$. The multiplication satisfies, for instance, $\rho_{1,2}\rho_{2,3}=\rho_{1,3}$ and $I(1)\rho_{1,2}I(2)=\rho_{1,2}$.

Note that $\PMC_k$ is symmetric under reflection: $-\PMC_k\cong \PMC_k$.

There is an inclusion map 
\[
\iota\co \overbrace{\Alg(\PMC_1)\otimes\cdots\otimes\Alg(\PMC_1)}^k\hookrightarrow \Alg(\PMC_k)
\]
which sends $\rho_{i,j}$ in the $\ell\th$ copy of $\Alg(\PMC_1)$ to $\rho_{4(\ell-1)+i,4(\ell-1)+j}$. There is also a projection map 
\[
\pi\co \Alg(\PMC_k)\to \overbrace{\Alg(\PMC_1)\otimes\cdots\otimes\Alg(\PMC_1)}^k
\]
satisfying $\pi\circ\iota=\Id_{\Alg(\PMC_1)^{\otimes k}}$ and $\pi(\rho)=0$ if $\rho$ is a strand diagram not in the image of $\iota$. (These are special cases of the maps in~\cite[Section 3.4]{LOT2}.)

\subsection{Explicit descriptions of some bordered handlebodies}\label{sec:comp-hb}

Let $Y_0$ be the \emph{$0$-framed solid torus}. The type $D$ structure $\CFDa(Y_0)$ has a single generator $n$ with 
\[
  \delta^1(n)=\rho_{1,3}n.
\]
The $\Ainf$-module $\CFAa(Y_0)$ also has a description with a single generator, but more convenient for us will be the model with three generators $t,u,v$,
\begin{align*}
  m_1(u)&=v &
  m_2(u,\rho_{1,2})&=t\\
  m_2(u,\rho_{1,3})&=v &
  m_2(t,\rho_{2,3})&=v,
\end{align*}
and all other $\Ainf$ operations vanish. In particular, this model for $\CFAa(Y_0)$ is an ordinary \dg module.
(The conventions are chosen so that $\CFAa(Y_0)\DT_{\Alg(\PMC_1)}\CFDa(Y_0)\cong\HFa(S^2\times S^1)\cong \Field\oplus\Field$.)

More generally, let $Y_{0^k}$ be the \emph{$0$-framed handlebody of
  genus $k$}. Then the \emph{standard type $D$ structure for
  $Y_{0^k}$}, denoted $\CFDa(Y_{0^k})$, is the image of
$\CFDa(Y_0)^{\otimes k}$ under the induction map
$\lsup{\Alg(-\PMC_1)^{\otimes
    k}}\ModCat\to\lsup{\Alg(-\PMC_k)}\ModCat$ associated to
$\iota$~\cite[Definition 2.2.48]{LOT2}. Equivalently, if $\lsup{\Alg(-\PMC_k)}[\iota]_{\Alg(-\PMC_1)^{\otimes k}}$ denotes the rank 1 \DA\ bimodule associated to $\iota$ then 
\[
  \CFDa(Y_{0^k})=\lsup{\Alg(-\PMC_k)}[\iota]_{\Alg(-\PMC_1)^{\otimes k}}\DT\left(\lsup{\Alg(-\PMC_1)}\CFDa(Y_0)\right)^{\otimes k}.
\]
The module $\CFAa(Y_{0^k})$ is the image of $\CFAa(Y_0)^{\otimes k}$ under the restriction map $\ModCat_{\Alg(\PMC_1)^{\otimes k}}\to\ModCat_{\Alg(\PMC_k)}$ associated to $\pi$. Equivalently,
\[
  \CFAa(Y_{0^k})_{\Alg(\PMC_k)}=\left(\CFAa(Y_0)_{\Alg(\PMC_1)}\right)^{\otimes k}\DT \lsup{\Alg(\PMC_1)^{\otimes k}}[\pi]_{\Alg(\PMC_k)}
\]

Explicitly, the type $D$ structure $\CFDa(Y_{0^k})$ has a single generator $n$ with
\[
\delta^1(n)=(\rho_{1,3}+\rho_{5,7}+\cdots+\rho_{4k-3,4k-1})n.
\]
The module $\CFAa(Y_{0^k})$ has basis $\{t,u,v\}^k$. The module structure is determined as follows. First, operations $m_i$, $i>2$, vanish: $\CFAa(Y_{0^k})$ is an honest \dg module. Second, $m_2(\cdot,\rho_{4i,4i+1})$ and $m_2(\cdot,\rho_{4i+3,4i+4})$ vanish identically. Third, given a basis element $(x_1,\dots,x_{k})\in \CFAa(Y_{0^k})$, 
\begin{align*}
  m_1((x_1,\dots,x_k))&=\sum_{x_i=u}(x_1,\dots,x_{i-1},v,x_{i+1},\dots,x_k)\\
  m_2((x_1,\dots,x_k),\rho_{4i+1,4i+2})&=\begin{cases}
	(x_1,\dots,x_{i},t,x_{i+2},\dots,x_k) & x_{i+1}=u\\
	0 & \text{otherwise}
  \end{cases}\\
  m_2((x_1,\dots,x_k),\rho_{4i+2,4i+3})&=\begin{cases}
	(x_1,\dots,x_{i},v,x_{i+2},\dots,x_k) & x_{i+1}=t\\
	0 & \text{otherwise}
  \end{cases}\\
  m_2((x_1,\dots,x_k),\rho_{4i+1,4i+3})&=\begin{cases}
	(x_1,\dots,x_{i},v,x_{i+2},\dots,x_k) & x_{i+1}=u\\
	0 & \text{otherwise.}
  \end{cases}
\end{align*}

\subsection{The type \texorpdfstring{\DD}{DD} identity bimodule}\label{sec:DD-id}
Fix a pointed matched circle $\PMC$ with orientation-reverse $-\PMC$.
Let $Y_{\Id}$ be the identity cobordism of $F(\PMC)$. Given a subset $\mathbf{s}\subset \{1,\dots,2k\}$, let $\mathbf{s}^c$ denote the complement of $\mathbf{s}$. Then $\CFDDa(\bId_\PMC)\coloneqq \CFDDa(Y_{\Id})$ is generated by $\{(I(\mathbf{s})\otimes I(\mathbf{s}^c))\}\subset \Alg(\PMC)\otimes\Alg(-\PMC)$. The differential is defined by 
\[
\delta^1(I(\mathbf{s})\otimes I(\mathbf{s}^c))=\sum_{\mathbf{t}\subset \{1,\dots,2k\}}\sum_{\rho\in\Chord(\PMC)}(I(\mathbf{s})\otimes I(\mathbf{s}^c))(a(\rho)\otimes a(-\rho))\otimes(I(\mathbf{t})\otimes I(\mathbf{t}^c))
\]
where $\Chord(\PMC)$ denotes the set of chords in the pointed matched
circle $\PMC$, and $-\rho$ is the chord in the orientation-reverse
$-\PMC$ associated to $\rho$.

\subsection{The Auroux-Zarev piece}\label{sec:AZ}

The \emph{Auroux-Zarev interpolating piece}~\cite{Auroux10:Bordered,Zarev:JoinGlue}, is the $\alpha$-$\beta$-bordered Heegaard diagram $\AZ(\PMC)$ defined as follows. For fixed $k$, let $T$ be the triangle defined by the $y$-axis, the $x$-axis, and the line $x+y=4k+1$. Let $e_y$ be the edge of $T$ along the $y$-axis, $e_x$ be the edge along the $x$-axis, and $e_D$ be the diagonal edge. Produce a genus $k$ surface $\Sigma'$ from $T$ by identifying small neighborhoods of the points $(i, 4k+1-i)$ and $(j, 4k+1-j)$ on $e_D$ whenever $i$ and $j$ are matched in $\PMC$. If $i$ and $j$ are matched in $\PMC$, the two vertical segments $T \cap \{x=i\}$ and $T \cap \{x=j\}$ descend to a single arc; declare this to be a $\beta$-arc. Similarly, the two horizontal segments $T \cap \{y=4k+1-i\}$ and $T \cap \{4k+1-j\}$ descend to a single arc; declare this to be an $\alpha$-arc. Finally, attach a one-handle connecting small neighborhoods of $(0,0)$ and $(4k+1,0)$, giving a surface $\Sigma$. Place the basepoint $z$ at $(0,4k+1)$. Then $\AZ(\PMC)=(\Sigma,\alphas,\betas,z)$, and the boundary of $\AZ(\PMC)$ is $\PMC\cup\PMC^\beta$. See Figure~\ref{fig:AZ}.

\begin{figure}
  \centering
  \includegraphics[width=\textwidth]{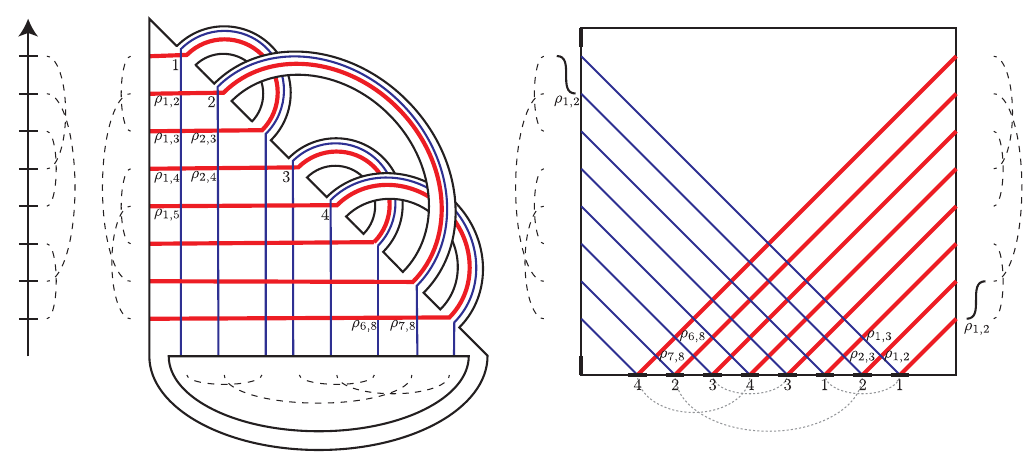}
  \caption{\textbf{The diagram $\AZ$.} Left: a pointed matched circle
    $\PMC$. Center: the diagram $\AZ(\PMC)$, with \textcolor{red}{$\alpha$-arcs thick} and \textcolor{blue}{$\beta$-arcs thin}. Some of the
    intersection points are labeled by the corresponding $\alpha$-arc if
    they correspond to a pair of horizontal strands, or the chord $\rho_{i,j}$ otherwise. Labels of generators are to the lower-left of the corresponding intersection point. Right: the same diagram, drawn to show the $\Alg(\PMC)$-actions on the left and right; the chord $\rho_{1,2}$ in each algebra is indicated. The thick segments are identified in pairs to give an orientable surface of genus $2$ with two boundary components; along the bottom, this identification is indicated by the dotted arcs.}
  \label{fig:AZ}
\end{figure}

There is a canonical identification between the set of generators
$\Gens(\AZ(\PMC))$ and the strand diagram basis for $\Alg(\PMC)$ as follows \cite{Auroux10:Bordered, LOTHomPair}. Numbering the $\alpha$-arcs from the top and the $\beta$-arcs from the left, the number of points in $\alpha_s \cap \beta_t$ is two if $s=t$ and otherwise is equal to the number of chords in $\PMC$ starting at an endpoint of $\alpha_t$ and ending at an endpoint of $\alpha_s$. If the endpoints of $\beta_s$ are $(i,0)$ and $(j,0)$, the intersection point in $\alpha_s \cap \beta_s$ which lies on $e_D$ corresponds to the smeared horizontal strand $\{i,j\}$. Other intersection points correspond to upward-sloping chords as follows: if $z$ lies at coordinates $(x,4k+1-y)$, then $z$ corresponds to the strand $\rho_{x,y}$ in $\Alg(\PMC)$. Figure~\ref{fig:AZ} indicates the identifications between intersection points in $\AZ(\PMC)$ and chords in $\Alg(\PMC)$. An arbitrary element of $\Gens(\AZ(\PMC))$ is a set of such intersection points, and corresponds to a strand diagram in $\Alg(\PMC)$. 

Using the fact that $\AZ(\PMC)$ is nice, it is easy to see that the differential on $\CFAAa(\AZ(\PMC))$ corresponds to the differential on $\Alg(\PMC)$. Furthermore, $m_2$ multiplications correspond to $k$-tuples of half-strips on the appropriate boundary \cite[Proposition 8.4]{LOTHomPair}. If we treat the $\alpha$ boundary as the right action and the $\beta$ boundary as the left action, we have 
\[
\CFAAa(\lsup{\beta}\AZ(\PMC)^{\alpha}) \simeq \lsub{\Alg(\PMC)}\Alg(\PMC)_{\Alg(\PMC)}
\]
whereas if we treat the $\alpha$ boundary as the left action and the $\beta$ boundary as the right action, we have
\[
\CFAAa(\lsup{\alpha}\AZ(\PMC)^{\beta}) \simeq \lsub{\Alg(-\PMC)}\Alg(-\PMC)_{\Alg(-\PMC)}.
\]
In our computations in Section 4, we will use $\AZ(-\PMC)$ (for $\PMC$ the split pointed matched circle), and treat the $\alpha$-boundary as the left action and the $\beta$-boundary as the right action. Then,
\[
\CFAAa(\lsup{\alpha}\AZ(-\PMC)^{\beta}) \simeq \lsub{\Alg(\PMC)}\Alg(\PMC)_{\Alg(\PMC)}.
\]
The corresponding labeling of generators is shown in Figure~\ref{fig:AZ2}.

We are also interested in a related diagram $\bAZ(\PMC)$ obtained from
$\AZ(\PMC)$ by switching the $\alpha$ and $\beta$
curves. (Equivalently, one could reflect $\AZ(-\PMC)$ across the
$x$-axis, obtaining $\bAZ(\PMC)=-\AZ(-\PMC)$.) Let
$\overline{\Alg(\PMC)}$ be the dual, over $\Field$, of $\Alg(\PMC)$. Since $\Alg(\PMC)$ comes with a preferred basis, the strand diagrams, there is a preferred basis $\{a^*\mid a\text{ is a strand diagram for }\Alg(\PMC)\}$ for $\overline{\Alg(\PMC)}$. The differential $\bar{d}$ on $\overline{\Alg(\PMC)}$ is the transpose of the differential $d$ on $\Alg(\PMC)$. Moreover, $\overline{\Alg(\PMC)}$ has left and right multiplications by $\Alg(\PMC)$: on the right, $a_1^*\cdot a_2$ is the element of $\overline{\Alg(\PMC)}$ which sends an element $a_3$ to $a_1^*(a_2a_3)$, and on the left $a_2 \cdot a_1^*$ is the element of $\overline{\Alg(\PMC)}$ which sends an element $a_3$ to $a_1^*(a_3 a_2)$. 

By the same computation as above one obtains
\begin{equation}\label{eq:CFAA-AZ-bar}
\CFAAa(\lsup{\beta}\bAZ(\PMC)^{\alpha}) \simeq \lsub{\Alg(\PMC)}\overline{\Alg(\PMC)}_{\Alg(\PMC)}
\end{equation}
if the $\alpha$-action is on the right \cite[Appendix A]{LOTHomPair}. See also Figure~\ref{fig:AZ2}.

\begin{figure}
  \centering
  \includegraphics[width=\textwidth]{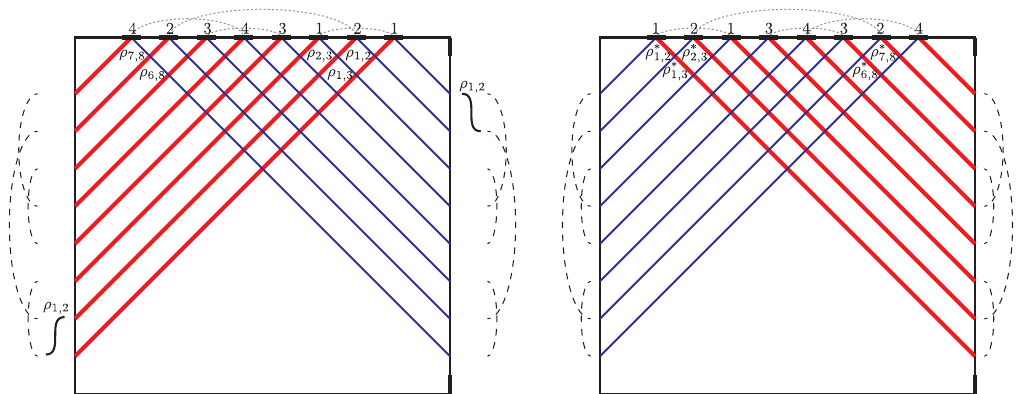}
  \caption{\textbf{The diagrams $\AZ(-\PMC)$ and $\bAZ(\PMC)$.} Left: the diagram $\AZ(-\PMC)$ for $\PMC$ the same pointed matched circle as in Figure~\ref{fig:AZ}, labeled compatibly with a left action by $\Alg(\PMC)$ corresponding to the $\alpha$-boundary and a right-action by $\Alg(\PMC)$ corresponding to the $\beta$-boundary. Right: the diagram $\bAZ(\PMC)$. Viewing the $\alpha$-boundary as the right action and the $\beta$-boundary as the left action, this is a bimodule over $\Alg(\PMC)$. Labels are to the left of the corresponding intersection point.}
  \label{fig:AZ2}
\end{figure}

Next we describe $\CFDAa(\lsup{\alpha}\AZ(-\PMC)^{\beta})$ in the case that the $\alpha$ boundary gives the left type $D$ structure and the $\beta$ boundary gives the right type $A$ structure. From the pairing theorem,
\begin{align*}
  \lsup{\Alg(\PMC)}\CFDAa(\lsup{\alpha}\AZ(-\PMC)^{\beta})_{\Alg(\PMC)}&\simeq \lsup{\Alg(\PMC)}\CFDDa(\bId_{-\PMC})^{\Alg(\PMC)}\DT \lsub{\Alg(\PMC)}\CFAAa(\lsup{\beta}\AZ(-\PMC)^{\alpha})_{\Alg(\PMC)}\\
  &=\lsup{\Alg(\PMC)}\CFDDa(\bId_{-\PMC^\beta})^{\Alg(\PMC)}\DT\lsub{\Alg(\PMC)}\Alg(\PMC)_{\Alg(\PMC)}.
\end{align*}
Thus, a generator of $\CFDAa(\lsup{\beta}\AZ(-\PMC)^{\alpha})$ corresponds to $J \otimes a$, where $a$ is a strand diagram in $\Alg(\PMC)$ and $J$ is the complementary idempotent to the left idempotent $I$ of $a$.  The map $\delta^1_2 \co \CFDAa(\AZ(-\PMC))\otimes \Alg(\PMC) \to \Alg(\PMC) \otimes \CFDAa(\AZ(-\PMC))$ is given by multiplication on the right; the image of $\delta^1_2$ is contained in the subspace $\CFDAa(\AZ(-\PMC))=1\otimes\CFDAa(\AZ(-\PMC))$.
The map $\delta_1^1 \co \CFDAa(\AZ(-\PMC)) \to \Alg(\PMC) \otimes \CFDAa(\AZ(-\PMC))$ is given by

\begin{align*}
  \delta_1^1(J \otimes a) = J \otimes (J \otimes d(a)) + \hspace{-2em}\sum_{\substack{\rho \in \Chord(\PMC)\\(J',I')\text{ complementary}}}\hspace{-2em} J a(\rho)J' \otimes ( J'\otimes I'a(\rho) \cdot a)
\end{align*}
All higher operations $\delta^1_k$, $k\geq 3$, vanish.

The same argument, but using Equation~\eqref{eq:CFAA-AZ-bar}, leads to
the following description of
$\CFDAa(\lsup{\beta}\bAZ(\PMC)^{\alpha})$. As needed by our
application, we will treat the $\beta$ boundary as the left action and
the $\alpha$ boundary as the right action. Generators of
$\CFDAa(\lsup{\beta}\bAZ(\PMC)^{\alpha})$ correspond to $J \otimes
a^*$, where $a$ is a strand diagram in $\PMC$, $a^*$ is the
corresponding basis element of $\overline{\Alg(\PMC)}$ and $J$ is the
complementary idempotent to the left idempotent $I$ of $a^*$ (or,
equivalently, the right idempotent $I$ of $a$). The map $\delta^1_2$
is given by $\delta^1_2(J \otimes a_1^*, a_2) = J\otimes \bigl(J \otimes (a_1^*\cdot a_2)\bigr)$.  The map
$\delta_1^1 \co \CFDAa(\bAZ(\PMC)) \to \Alg(\PMC) \otimes \CFDAa(\bAZ(\PMC))$
is given by
\begin{align*}
\delta_1^1(J \otimes a^*) = J \otimes (J \otimes \bar{d}(a^*)) + \hspace{-2em}\sum_{\substack{\rho \in \Chord(\PMC)\\(J',I')\text{ complementary}}}\hspace{-2em} J a(\rho)J' \otimes ( J'\otimes I'a(\rho) \cdot a^*).
\end{align*}
All higher operations $\delta^1_k$, $k\geq 3$, vanish.

To conclude this section, we recall some gluing properties
of the diagrams $\AZ$ and $\bAZ$ from \cite{LOTHomPair}:

\begin{lemma} \cite[Corollary 4.5]{LOTHomPair} The Heegaard diagram $\lsup{\alpha}\AZ(-\PMC)^{\beta} \cup \lsup{\beta}\bAZ(\PMC)^{\alpha}$ represents the identity map of $F(\PMC^{\alpha})$, and the diagram $\lsup{\beta}\bAZ(-\PMC)^{\alpha} \cup \lsup{\alpha}\AZ(\PMC)^{\beta}$ represents the identity map of $F(\PMC^{\beta})$.
\end{lemma}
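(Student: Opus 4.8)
The plan is to treat this as a purely topological statement — an identity of doubly bordered $3$-manifolds together with their boundary parametrizations — so no Floer-theoretic input is needed; I would only use the classification of bordered Heegaard diagrams up to isotopy, handleslide, and (de)stabilization from \cite{LOT1}. By the $\alpha\leftrightarrow\beta$ symmetry of the whole setup (which interchanges $\AZ$ and $\bAZ$, up to the rotation built into their definitions) it is enough to prove the first statement, that $\HD\coloneqq\lsup{\alpha}\AZ(-\PMC)^{\beta}\cup\lsup{\beta}\bAZ(\PMC)^{\alpha}$ represents $\mathrm{id}_{F(\PMC^\alpha)}$; the overall strategy is to exhibit a sequence of Heegaard moves from $\HD$ to the standard mapping-cylinder diagram for the identity.

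First I would unwind the two pieces from their defining triangles. Recall that $\AZ(-\PMC)$ is a triangle $T_1$ with its diagonal edge $e_D$ identified according to the matching of $-\PMC$ and with a one-handle attached along the bottom, while $\bAZ(\PMC)=-\AZ(-\PMC)$ is the reflection across the $x$-axis of a second copy $T_2$; the common $\beta$-boundary along which the two are glued is the bottom side, closed up by the one-handle into a copy of $\PMC^\beta$. Gluing $T_1$ to $T_2$ along their $\beta$-boundaries assembles a single surface built from two triangles sharing a diagonal, on which the $\alpha$-arcs coming from $\AZ(-\PMC)$ and the $\beta$-arcs coming from $\bAZ(\PMC)$ interleave in the familiar staircase pattern. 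The main step is then to simplify: along (the image of) the glued diagonal, each intersection point of an $\alpha$-arc of $\AZ(-\PMC)$ with the corresponding $\beta$-arc of $\bAZ(\PMC)$ bounds an obvious bigon, hence is a destabilization. Performing these destabilizations one at a time — interspersed with isotopies and handleslides to keep everything embedded and to unlink the arcs that remain — should collapse $\HD$ to the minimal-genus diagram in which each $\alpha$-circle meets a single $\beta$-circle transversely once, with the arcs running parallel, which is exactly the standard Heegaard diagram for $F(\PMC^\alpha)\times[0,1]$ with the identity parametrization on both ends. Since Heegaard moves do not change the underlying doubly bordered $3$-manifold, this would finish the proof.

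The step I expect to be the real work is the bookkeeping of the two boundary parametrizations through the reflection and the gluing. One must check that the reflection defining $\bAZ(\PMC)$ carries the $\alpha$-arcs of $\AZ(-\PMC)$ meeting $e_D$ onto the $\beta$-arcs of $\bAZ(\PMC)$ meeting the glued edge in exactly the matching pattern prescribed by $\PMC$, so that the circle along which the two pieces are glued really is the \emph{identity} of $F(\PMC^\beta)$ and not some nontrivial element of its mapping class group, and also that once the destabilizations are done the two surviving boundary circles are each parametrized by $\PMC^\alpha$ with the identity gluing between them. This is a finite and explicit diagram-chase in the triangle picture — the same verification carried out in \cite[Corollary 4.5]{LOTHomPair} — but it is fussy, and getting the orientation conventions ($-\PMC$ versus $\PMC$, the clockwise rotation, the placement of $z$ at $(0,4k+1)$) to line up is where I would be most careful.
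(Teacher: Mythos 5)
This lemma is not proved in the paper at all: it is quoted verbatim from \cite[Corollary 4.5]{LOTHomPair}, so there is no in-paper argument to compare against. Judged on its own terms, your sketch has the right high-level strategy (exhibit a sequence of Heegaard moves from $\lsup{\alpha}\AZ(-\PMC)^{\beta}\cup\lsup{\beta}\bAZ(\PMC)^{\alpha}$ to the standard diagram for the identity cobordism, and track the boundary parametrizations), and the symmetry reduction to one of the two statements is fine. But the mechanism you propose for the main step does not exist in this situation.

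Concretely: destabilization is a move that cancels an $\alpha$-\emph{circle} against a $\beta$-\emph{circle} meeting it in a single point, and the glued diagram contains no $\alpha$-circles whatsoever --- both $\AZ$ and $\bAZ$ consist entirely of arcs, so the union has only $\alpha$-arcs (on the two outer boundaries) and the $2k$ $\beta$-circles formed by gluing $\beta$-arcs. Moreover the genus of the union is already $2k$, which is exactly the genus of the standard identity diagram, so there is no genus to reduce; what actually needs to be done is to eliminate the excess intersection points of the $\beta$-circles with the $\alpha$-arcs, which requires isotopies and handleslides of the $\beta$-circles over one another --- a different move with a different verification. Relatedly, an intersection point ``bounding an obvious bigon'' would license cancelling a \emph{pair} of intersection points by isotopy, not a destabilization; you are conflating two different moves. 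The geometric picture is also off: the two triangles are glued along their $\beta$-boundaries (the $e_x$ edge of one to the rotated $e_y$ edge of the other, together with the one-handles), not along their diagonals --- the diagonal edges $e_D$ carry the internal self-identifications of each piece and are not the gluing locus. Finally, your target diagram (``each $\alpha$-circle meets a single $\beta$-circle once'') does not describe the standard identity diagram, which has only $\alpha$-arcs, each meeting one $\beta$-circle once. So while the last paragraph correctly identifies where the care is needed (orientations, the placement of $z$, and checking the gluing circle is parametrized by the identity), the middle of the argument would have to be replaced wholesale by the handleslide/isotopy argument of \cite{LOTHomPair} before this could count as a proof.
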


\begin{lemma}\cite[Corollary 4.6]{LOTHomPair}
Let $\HD$ be an $\alpha$-bordered Heegaard diagram for $(Y, \phi \co F(\PMC) \to \bdy Y)$. Then the Heegaard diagram $\HD^{\beta}\cup \lsup{\beta}\AZ(-\PMC)^{\alpha}$ represents the three-manifold $(-Y, \phi \co F(-\PMC) \to -\bdy Y)$. In particular, $\HD^{\beta}\cup \lsup{\beta}\AZ(-\PMC)^{\alpha}$ and $-\HD$ represent the same bordered three-manifold.
\end{lemma}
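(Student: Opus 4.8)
The statement asks only which bordered $3$-manifold a diagram represents, so the whole thing is a handle-theoretic bookkeeping argument, and the plan is to extract it from facts already assembled in the excerpt by exploiting two general principles. The first is the reflection identity $\bAZ(\PMC)=-\AZ(-\PMC)$ recorded above. The second is that reversing the orientation of the Heegaard surface distributes over boundary gluing of bordered diagrams: if $\mathcal{H}_1$ is glued to $\mathcal{H}_2$ along a pointed matched circle, then $-(\mathcal{H}_1\cup_\bdy\mathcal{H}_2)\cong(-\mathcal{H}_1)\cup_\bdy(-\mathcal{H}_2)$ --- the gluing remains legitimate because reversing the orientations of both identified pointed matched circles preserves the condition that they be orientation-reverses --- and on the level of $3$-manifolds this reflects $-(Y_1\cup_F Y_2)=(-Y_1)\cup_{-F}(-Y_2)$.

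Granting these, the argument is short. Recall $\HD^\beta=-\conj{\HD}$, which is immediate from the definition $\conj{\HD}=-\HD^\beta$ in the excerpt. Applying orientation-reversal to the diagram in question and using the two principles,
\[ -\bigl(\HD^\beta\cup \lsup{\beta}\AZ(-\PMC)^{\alpha}\bigr)\;\cong\;(-\HD^\beta)\cup \lsup{\beta}\bAZ(\PMC)^{\alpha}\;=\;\conj{\HD}\cup \lsup{\beta}\bAZ(\PMC)^{\alpha}, \]
since $-\AZ(-\PMC)\cong\bAZ(\PMC)$ and $-\HD^\beta=\conj{\HD}$. By the identity $\conj{\HD}\cup_\bdy\bAZ\cong\HD$ recorded in the introduction the right-hand side represents $(Y,\phi)$, so, reversing orientation once more, $\HD^\beta\cup \lsup{\beta}\AZ(-\PMC)^{\alpha}$ represents $-Y$. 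Since $-\HD$ represents $(-Y,\phi\co F(-\PMC)\to-\bdy Y)$ essentially by definition, it remains only to track the boundary parametrization through this chain of equivalences --- a routine check, using that $\HD^\beta$ is the ``upside-down Morse function'' diagram for $-Y$ and that $\AZ(-\PMC)$ interpolates between the $\beta$-bordering by $\PMC^\beta$ and the $\alpha$-bordering by $-\PMC$ --- to see that these parametrizations agree, which gives both the precise statement and the ``in particular''.

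The real obstacle is the orientation- and parametrization-bookkeeping hidden in the two principles and in the final tracking step --- in particular being sure that the two orientation-reversals really cancel and that the composite $\alpha$-bordering of $-\bdy Y$ is $\phi$ itself, not $\phi$ composed with a self-diffeomorphism of $F(-\PMC)$. The clean way to pin this down is to treat the Auroux--Zarev pieces as black boxes carrying only the gluing properties already quoted: the reflection identity above and the fact --- packaged by the preceding lemma once one glues the complementary $\bAZ$ onto its free boundary and uses that $\AZ\cup\bAZ$ is an identity --- that $\AZ(\PMC)$, read as a $\beta$-to-$\alpha$ bimodule diagram, represents a trivial cobordism of $F(\PMC)$. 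If one worries that the introduction's $\conj{\HD}\cup_\bdy\bAZ\cong\HD$ is too near the target to invoke here, the same argument runs starting only from the preceding lemma: glue the complementary Auroux--Zarev piece onto the free $-\PMC$-boundary of $\HD^\beta\cup \lsup{\beta}\AZ(-\PMC)^{\alpha}$, collapse the resulting $\AZ$--$\bAZ$ pair to a trivial cobordism, and apply the orientation-reversal rewriting above. One should also check the small but genuine point that gluing an identity-cobordism diagram onto $\HD$ returns a diagram equivalent to $\HD$, i.e.\ that the passage from a diagram to its bordered $3$-manifold is functorial under such gluings.
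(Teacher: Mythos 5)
This lemma is imported verbatim from the literature: the paper cites \cite[Corollary 4.6]{LOTHomPair} and supplies no proof of its own, so there is no internal argument to compare yours against; I can only assess your proposal on its merits.

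Your main line of argument is circular. The identity $\conj{\HD}\cup_\bdy\bAZ\cong\HD$ that you invoke is quoted in the introduction from \cite[Lemma 4.6]{LOTHomPair} --- the same external result 4.6 as the statement you are asked to prove, just phrased before rather than after the orientation reversal. Indeed your computation $-\bigl(\HD^{\beta}\cup\lsup{\beta}\AZ(-\PMC)^{\alpha}\bigr)\cong\conj{\HD}\cup\bAZ(\PMC)$ shows exactly that the two phrasings are equivalent, so deducing one from the other only checks that the paper's two quotations of result 4.6 are consistent; it proves neither. You anticipate this and offer a fallback via the preceding lemma, but that sketch has two concrete gaps. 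First, the identity configurations supplied by the preceding lemma glue the two $\beta$-boundaries of $\AZ(-\PMC)$ and $\bAZ(\PMC)$ to each other, leaving two free $\alpha$-boundaries; in $\HD^{\beta}\cup\lsup{\beta}\AZ(-\PMC)^{\alpha}$ the $\beta$-boundary of $\AZ(-\PMC)$ is already consumed by $\HD^{\beta}$, and the piece you would need to cap the free $(-\PMC)$-boundary so as to realize one of the two stated identity pairs is not among them, so ``collapse the resulting $\AZ$--$\bAZ$ pair'' does not parse without an additional reflection argument you have not supplied. Second, and more fundamentally, even a correct cancellation tells you what the \emph{doubly} glued diagram represents; to conclude what $\HD^{\beta}\cup\lsup{\beta}\AZ(-\PMC)^{\alpha}$ itself represents, boundary parametrization included, you must identify the bordered $3$-manifold determined by the $\AZ$ diagram on its own, via its handle decomposition --- this is the actual content of the lemma (and of the argument in \cite{LOTHomPair}), and neither branch of your proposal engages with it. Your two general principles (orientation reversal distributes over gluing; $\bAZ(\PMC)=-\AZ(-\PMC)$) are correct and correctly applied, but they only relocate the difficulty.
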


\begin{convention}
  In the rest of the paper, we will typically drop $\PMC$ from the Auroux-Zarev piece, writing $\AZ$ (respectively $\bAZ$) to denote $\AZ(\PMC)$ or $\AZ(-\PMC)$ (respectively $\bAZ(\PMC)$ or $\bAZ(-\PMC)$) as appropriate. Whether $\PMC$ or $-\PMC$ is required is determined by the boundary of the diagram.
\end{convention}

\subsection{Gradings on bordered Floer modules}\label{sec:gradings}
A key step in our computations is knowing that there are unique graded homotopy equivalences between certain modules and bimodules (as formulated in Section~\ref{sec:rigid}). Here we review enough of the gradings in bordered Floer homology to make this statement precise. More details can be found in the original papers~\cite[Chapter 10]{LOT1},~\cite[Sections 2.5, 3.2, 6.5]{LOT2}.

Fix a pointed matched circle $\PMC$ representing a surface $F(\PMC)$. The algebra $\Alg(\PMC)$ is graded by a group $G(\PMC)$ which is a central extension
\[
  \ZZ\to G(\PMC)\to H_1(F(\PMC)).
\]
%
Let $\lambda$ be a generator for the central $\ZZ$. For homogeneous
elements $a,b\in\Alg(\PMC)$, differential satisfies $\gr(\bdy(a))=\lambda^{-1}\gr(a)$, and the multiplication satisfies $\gr(ab)=\gr(a)\gr(b)$.

Given a bordered $3$-manifold $Y$ with boundary parameterized by
$F(\PMC)$, $\CFAa(Y)$ is graded by a right $G(\PMC)$-set $S_A(Y)$, and
$\CFDa(Y)$ is graded by a left $G(-\PMC)$-set $S_D(Y)$. The $G$-orbits
in these sets correspond to the $\SpinC$-structures on $Y$. Similarly, if $Y$ is a cobordism from $F(\PMC_1)$ to $F(\PMC_2)$ then $\CFDAa(Y)$ is graded by a set $S_{\mathit{DA}}(Y)$ with a left action by $G(-\PMC_1)$ and a right action by $G(\PMC_2)$; $\CFDDa(Y)$ is graded by a set $S_{\mathit{DD}}(Y)$ equipped with commuting left actions by $G(-\PMC_1)$ and $G(-\PMC_2)$; and $\CFAAa(Y)$ is graded by a set $S_{\mathit{AA}}(Y)$ equipped with commuting right actions by $G(\PMC_1)$ and $G(\PMC_2)$.
The group $G(-\PMC)$ is the opposite group to $G(\PMC)$, so a left $G(-\PMC)$-set is the same data as a right $G(\PMC)$-set; $S_A(Y)$ and $S_D(Y)$ are related in this way. (Of course, all groups are isomorphic to their opposites, but here it is convenient to maintain the distinction.)

The $G(\PMC)$-grading on the bordered (bi)modules depends on a choice
of \emph{grading refinement data}~\cite[Section 10.5]{LOT1}. However,
up to homotopy equivalence, the bordered invariants are independent of
this choice~\cite[Proposition 6.32]{LOT2}.

The special cases of interest to us are:
\begin{enumerate}
\item Handlebodies. Suppose $Y$ is a handlebody of genus $g$. Then there is a unique $\SpinC$-structure on $Y$. The corresponding $G(\PMC)$-set $S_D(Y)$ is the quotient of $G(-\PMC)$ by a subgroup isomorphic to $\ZZ^g$, which projects isomorphically to $\ker[H_1(F(\PMC))\to H_1(Y)]\subset H_1(F(\PMC))$. In particular, the grading element $\lambda$ acts freely on $S_D(Y)$. 
\item Mapping cylinders of diffeomorphisms. If $\phi\co F(\PMC_1)\to F(\PMC_2)$ is a strongly based diffeomorphism and $Y_\phi$ is the associated arced cobordism then $S_{\mathit{DA}}(Y)$ is a free, transitive $G(-\PMC_1)$-set, and also a free, transitive $G(\PMC_2)$-set. Similar statements hold for $S_{\mathit{DD}}(Y)$ and $S_{\mathit{AA}}(Y)$.
\end{enumerate}

Given type $D$ structures $\lsup{\Alg(-\PMC)}P$ and $\lsup{\Alg(-\PMC)}Q$, graded by $G(-\PMC)$-sets $S$ and $T$, respectively, the chain complex of type $D$ structure morphisms $\Mor^{\Alg(-\PMC)}(P,Q)$ inherits a grading by the $\ZZ$-set $S^*\times_{G(-\PMC)} T$~\cite[Section 2.5.3]{LOT2}, where $S^*$ is the right $G(-\PMC)$-set with elements $s^*$ in bijection with $S$ and action $s^*\cdot g=(g^{-1} \cdot s)^*$~\cite[Definition 2.5.19]{LOT2}. The $\ZZ$-action persists because $\lambda$ is central in $G(-\PMC)$. The situation for $\Ainf$-modules and the various types of bimodules is similar. A morphism is \emph{homogeneous} if it lies in a single grading.

So, if $G(-\PMC)$ acts transitively on the grading set $S$ for $\lsup{\Alg(-\PMC)}P$ then the
complex $\Mor^{\Alg(-\PMC)}(P,P)$ is graded by $S^*\times_{G(-\PMC)}S\cong (S\times S)/G$ as $\ZZ$-sets. A morphism has \emph{grading 0} if it lands in the summand corresponding to $(s,s)\in (S\times S)/G$ for some (or equivalently, any) $s\in S$.

\begin{example}\label{eg:CFD-gr} Let $Y$ be a $0$-framed solid torus,
  and consider $\CFDa(Y)$. Since $\delta^1(n)=\rho_{1,3}n$, the
  gradings satisfy $\gr(\rho_{1,3}x)=\lambda^{-1}\gr(x)$.  Thus, the
  homomorphism $\CFDa(Y)\to\CFDa(Y)$, $x\mapsto \rho_{1,3}x$
  has degree $\lambda^{-1}\neq 0$.
\end{example}



\section{Computation of homotopy equivalences}\label{sec:comp-htpy-equiv}
Two key steps in our descriptions of involutive Floer homology and the
mapping class group involve computing homotopy equivalences between
$\Ainf$-modules or between type \DA\ bimodules.  
We explain in this section that the bordered algebras have finiteness
properties which imply that these computations can be carried out to
any order desired.

\begin{lemma}\label{lem:short-fact}
  Given a pointed matched circle $\PMC$ there is an integer $K$ so
  that any product of $n>K$ chords in $\Alg(\PMC)$ vanishes.
\end{lemma}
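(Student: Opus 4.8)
The plan is to produce a nonnegative, integer-valued function $m$ on the strand-diagram basis of $\Alg(\PMC)$ that is bounded above, is additive on nonzero products, and takes value at least $1$ on every chord. The integer $K\coloneqq\max m$ then does the job: a nonzero product of $n$ chords would be a nonzero sum of strand diagrams each of $m$-value at least $n$, forcing $n\le K$.

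Concretely, recall that $\Alg(\PMC)$ embeds as a subalgebra of the strand algebra $\mathcal{A}(N)$ on the $N$ points $a_1,\dots,a_N$ obtained by cutting $Z$ at $z$, and that $\mathcal{A}(N)$ has an $\Field$-basis of strand diagrams $(S,T,\phi)$, where $S,T\subseteq\{1,\dots,N\}$ have the same cardinality and $\phi\co S\to T$ is a bijection with $\phi(s)\ge s$ for every $s$; under this embedding each chord $a(\rho_{i,j})$ is a sum of strand diagrams, each of which has a single non-stationary strand, running from $i$ to $j$. For a strand diagram put
\[
m(S,T,\phi)=\sum_{s\in S}\bigl(\phi(s)-s\bigr)\in\ZZ_{\ge 0}.
\]
First I would record the three properties of $m$. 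It is bounded above, since $m(S,T,\phi)\le |S|\,(N-|S|)$ (or simply because $\mathcal{A}(N)$ is finite-dimensional), so $K=\max m<\infty$. Every strand diagram occurring in $a(\rho_{i,j})$ has $m=j-i\ge 1$. And $m$ is additive on nonzero products: if $(S,T,\phi)\cdot(T,U,\psi)=(S,U,\psi\circ\phi)\ne 0$, then, using that $\phi$ maps $S$ bijectively onto $T$,
\[
m(S,U,\psi\circ\phi)=\sum_{s\in S}\bigl(\psi(\phi(s))-\phi(s)\bigr)+\sum_{s\in S}\bigl(\phi(s)-s\bigr)=m(T,U,\psi)+m(S,T,\phi).
\]

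Granting these, the lemma follows at once: expanding a product $a(\rho_{i_1,j_1})\cdots a(\rho_{i_n,j_n})$ by distributing over the sums defining the chords writes it as a sum of products $c_1\cdots c_n$ of strand diagrams, with $c_\ell$ occurring in $a(\rho_{i_\ell,j_\ell})$; a nonzero such summand has $m=\sum_{\ell=1}^{n}(j_\ell-i_\ell)\ge n$, which is incompatible with $m\le K$ as soon as $n>K$, so every summand, hence the whole product, vanishes. I do not expect a genuine obstacle here. The two points that need care are that a single chord is in general a sum of several strand diagrams, so the displacement estimate must be applied term-by-term after expanding the product, and that the ways a product of two strand diagrams in $\mathcal{A}(N)$ can vanish---mismatched endpoint sets, or a double crossing appearing in the composition---only ever delete terms and never create a nonzero term that would spoil the additivity of $m$.
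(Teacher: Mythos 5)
Your proof is correct and is essentially the paper's argument made explicit: the paper's one-line proof also rests on the total upward displacement of a strand diagram being bounded (because no two strands start at the same point, giving the bound $1+2+\cdots+(4k-1)=2k(4k-1)$), positive on chords, and additive under nonzero products. You have simply written out the verification of these three properties in detail.
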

\begin{proof}
  This is immediate from the fact that no two strands in a strand
  diagram can start at the same point in the matched
  circle. So, if $\PMC$ represents a surface of genus $k$, 
  \[
    K=1+2+\cdots+4k-1=2k(4k-1)
  \]
  suffices. (This bound is not optimal.)
\end{proof}

\begin{proposition}\label{prop:compute-morphism}
  Fix a \dg algebra $\Blg$ and let $M$ and $N$ be type \DA\ bimodules
  over $\Blg$ and $\Alg(\PMC)$ where $\PMC$ is a pointed
  matched circle. Let $K$ be as in Lemma~\ref{lem:short-fact}. Suppose $\ell\geq K$ and
  $\{f^1_{1+n}\co M\otimes \Alg(\PMC)^{\otimes n}\to \Blg\otimes N\}_{n=0}^{\ell}$
  satisfy the type \DA\ homomorphism relations with up to $\ell+1$ inputs.
  Then there is a type \DA\
  module homomorphism $g\co M\to N$ so that $g^1_{1+n}=f^1_{1+n}$ for
  all $0\leq n\leq \ell$.
\end{proposition}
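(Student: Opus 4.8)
The plan is to extend the given partial homomorphism by zero: set $g^1_{1+n}=f^1_{1+n}$ for $0\le n\le\ell$ and $g^1_{1+n}=0$ for every $n>\ell$, and then verify the type \DA\ homomorphism relations directly. Recall that each such relation is a sum of terms, every term being built by composing one or two of the maps $\delta^M_{1+\bullet}$, $\delta^N_{1+\bullet}$, $g^1_{1+\bullet}$ (together with the multiplication on $\Blg$), possibly after first multiplying two adjacent $\Alg(\PMC)$-inputs or applying the differential of $\Alg(\PMC)$ to one of them. Since the relation with $1+n$ inputs involves $g^1_{1+j}$ only for $j\le n$, for $n\le\ell$ it coincides with one of the hypotheses, and so holds.

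What remains is to treat the relations with $n>\ell$ inputs, and here I would show that every term vanishes. A term applying $g^1_{1+j}$ with $j>\ell$ is zero, since $g^1_{1+j}=0$ there. In a term applying $g^1_{1+j}$ with $j\le\ell$, the remaining $n-j\ge1$ algebra inputs are consumed by a single application of $\delta^M_{1+(n-j)}$ or $\delta^N_{1+(n-j)}$ --- except in the lone case $n=\ell+1$, where two adjacent inputs may instead be multiplied and fed to $g^1_{1+\ell}=f^1_{1+\ell}$. The crucial point is that, for the finite explicit bimodules to which we apply this proposition, there are no structure operations $\delta^1_{1+m}$ once $m$ exceeds a fixed bound: this is where Lemma~\ref{lem:short-fact} enters, the length bound on $\Alg(\PMC)$ forcing the (multiplied-out) operations of such bimodules to vanish after $K$ inputs, and one may always pass to such a model. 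Because $\ell\ge K$, every surviving $\delta$-term then has strictly more inputs than the relevant structure map permits, and so vanishes.

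This leaves only the case $n=\ell+1$, in which a term may be a single $\Alg(\PMC)$-multiplication followed by $f^1_{1+\ell}$, so that the relation is not visibly trivial term by term; here one invokes the remaining part of the hypothesis --- that the $f^1$ already satisfy the \DA\ homomorphism relation at this stage, with $f^1_{1+(\ell+1)}$ read as $0$, which is exactly what $g$ supplies. Putting the cases together shows that $g$ satisfies all type \DA\ homomorphism relations, hence is a homomorphism restricting to $f$ in degrees $\le\ell$. I expect the main obstacle to be precisely the bookkeeping of the second paragraph: pinning down which terms of a relation with more than $\ell$ inputs can be nonzero, and in particular disposing of the $n=\ell+1$ case, where one must appeal to an algebra multiplication rather than to an over-long structure map. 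The rest of the argument is routine.
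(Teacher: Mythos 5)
Your strategy---extend $f$ by zero and check the relations term by term---does not work, and the gap is exactly at the point you flag as ``bookkeeping.'' For $n>\ell$ the type \DA\ homomorphism relation with $1+n$ inputs contains the terms $g^1_{1+j}\circ(\delta^{M,1}_{1+i}\otimes\Id^{\otimes j})$ and $(\mu_2\otimes\Id)\circ(\Id\otimes\delta^{N,1}_{1+j})\circ(g^1_{1+i}\otimes\Id^{\otimes j})$ with $i+j=n$ and the $g$-index at most $\ell$; these survive extension by zero, and nothing in the hypotheses makes them cancel. Your appeal to Lemma~\ref{lem:short-fact} to kill them conflates a property of the algebra with a property of the bimodules: the lemma says that a \emph{product} of more than $K$ chords vanishes in $\Alg(\PMC)$, but a type \DA\ operation $\delta^1_{1+m}(x,a_1,\dots,a_m)$ need not factor through the product $a_1\cdots a_m$ and can be nonzero for arbitrarily large $m$. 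The proposition assumes nothing about boundedness of $M$ and $N$, and ``passing to such a model'' replaces $M$ and $N$ by different bimodules, after which the conclusion $g^1_{1+n}=f^1_{1+n}$ no longer refers to the original data. Separately, your treatment of $n=\ell+1$ misreads the hypothesis: the relations are assumed only with up to $\ell+1$ \emph{total} inputs, i.e.\ up to $\ell$ algebra inputs, so the relation with $\ell+1$ algebra inputs (which involves $f^1_{1+\ell}\circ\mu_2$, $f^1_{1+\ell}\circ\delta^M_2$, $\delta^N_2\circ f^1_{1+\ell}$, and the undefined $f^1_{2+\ell}$) is not available to you.

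The substance of the proposition is precisely that the extension may require \emph{nonzero} higher terms, which is why the paper's proof is not a direct verification. It applies the equivalence of categories $\cdot\DT\CFDDa(\Id_\PMC)$ from type \DA\ to type \DD\ bimodules: because the differential on $\CFDDa(\Id_\PMC)$ outputs chords and (by Lemma~\ref{lem:short-fact}) products of more than $K$ chords vanish, the image $f\DT\Id_{\CFDDa(\Id_\PMC)}$ depends only on $f^1_{1+n}$ for $n\le K\le\ell$ and is an honest \DD\ homomorphism. One then pulls back a genuine type \DA\ homomorphism $g$ with $g\DT\Id\simeq f\DT\Id$, writes $g-f=d(h)$, and corrects by the part of $h$ supported in input-degree $>\ell+1$ to obtain a homomorphism agreeing with $f$ through $\ell$ algebra inputs. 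If you want to salvage a direct argument, you would need some such mechanism for producing the higher terms $g^1_{1+n}$, $n>\ell$; setting them to zero is not an option.
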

Since $\Ainf$-modules are a special case of type \DA\ bimodules, this
proposition covers $\Ainf$-modules as well. Roughly, the proposition
says that, after building a homomorphism which takes up to $K$ inputs,
one never gets stuck in extending the homomorphism to take one more
input.
\begin{proof}[Proof of Proposition~\ref{prop:compute-morphism}]
  View $\CFDDa(\bId_\PMC)$ as a left-right type \DD\ structure over $\Alg(\PMC)$ and $\Alg(\PMC)$.
  The functor $\cdot\DT\CFDDa(\bId_\PMC)$ gives an equivalence of
  categories from the category of type \DA\ bimodules over $\Blg$ and
  $\Alg(\PMC)$ to the category of (left-right) type \DD\ bimodules over $\Blg$ and
  $\Alg(\PMC)$. This functor sends a morphism $f\in\Mor(M,N)$ to
  $f\DT\Id_{\CFDDa(\bId_\PMC)}$.  As we will see, the key point is that the form of the
  differential $\delta^1$ on $\CFDDa(\bId_\PMC)$ and
  Lemma~\ref{lem:short-fact} imply that the map
  $f\DT\Id_{\CFDDa(\bId_\PMC)}$ depends only on the terms $f^1_{1+n}$
  for $n\leq K$.

  Fix data $f=\{f^1_{1+n}\}_{n=0}^{\ell}$ as in the statement of
  the proposition. Temporarily declare $f^1_i=0$ for $i>\ell$, and
  form $f\DT\Id_{\CFDDa(\bId_\PMC)}$. It follows from
  Lemma~\ref{lem:short-fact} and the form of $\delta^1$ on
  $\CFDDa(\bId_\PMC)$ (see also Section~\ref{sec:DD-id}) that 
  $f\DT\Id_{\CFDDa(\bId_\PMC)}$ is a type \DD\
  structure homomorphism. Since $\cdot\DT\CFDDa(\bId_\PMC)$ is a
  homotopy equivalence of \dg categories, there is a type \DA\ structure
  homomorphism $g$ so that
  $g\DT \Id_{\CFDDa(\bId_\PMC)}$ is homotopic to $f\DT\Id_{\CFDDa(\bId_\PMC)}$. So,
  $(g-f)\DT\Id_{\CFDDa(\bId_\PMC)}$ is nullhomotopic, so $g-f$ is itself
  nullhomotopic. Let $h$ be a nullhomotopy of $g-f$, i.e.,
  $g-f=d(h)$. Write $h=h'+h''$ where $h'$ consists of the terms with
  $\leq \ell+1$ inputs and $h''$ consists of the terms with $>\ell+1$
  inputs. Let $\tilde{f}=f+d(h'')$. Then $\tilde{f}^1_{1+n}=f^1_{1+n}$
  for all $n\leq \ell$. Further,
  \[
    \tilde{f}=g+d(h')
  \]
  so $\tilde{f}$ is a type \DA\ structure homomorphism. This proves the result.
\end{proof}

Proposition~\ref{prop:compute-morphism} implies that if $M$ and $N$
are homotopy equivalent then one can compute a homotopy
equivalence. First one finds terms with up to $K+1$ inputs satisfying
the type \DA\ structure relations with up to $K+1$ inputs, and so that this map has an up-to-$(K+1)$-input homotopy inverse. This is a finite (albeit
huge) computation. Proposition~\ref{prop:compute-morphism} then
implies that one can extend any such solution to more inputs, by
solving the type \DA\ structure relation inductively; one never gets
stuck.

Maybe a final word is in order about the meaning of the word
\emph{compute}. We have finitely generated modules $M$ and $N$ with
only finitely many non-zero operations. A type \DA\ structure
homomorphism from $M$ to $N$ is a computer program (Turing machine) $f$ which takes as
input an integer $\ell$ and inputs $m\in M$ and
$a_1,\dots,a_\ell\in\Alg(\PMC)$ and gives as output an element of
$N$. Being able to compute $f$ means we can write a computer program
$\mathscr{F}$ which takes as inputs homotopy equivalent modules $M$
and $N$ and outputs a computer program $f$ representing a type \DA\
homotopy equivalence from $M$ to $N$.

\section{Rigidity results}\label{sec:rigid}
In this section we prove that, up to homotopy, there are unique homogeneous
homotopy equivalences between certain modules.  The results in this
section were originally observed by P.\ Ozsv\'ath, D.\ Thurston, and
the second author.

We will call a map (and, in particular, a homotopy equivalence) $f$
\emph{homogeneous} if $f$ is homogeneous with respect to the grading on
morphism spaces (cf.~Section~\ref{sec:gradings}).
\begin{lemma}\label{lem:0-HB-rigid}
  Let $Y_{0^k}$ be the $0$-framed handlebody of genus $k$ and
  $\CFDa(Y_{0^k})$ the standard type $D$ module for $Y_{0^k}$ (as in
  Section~\ref{sec:comp-hb}). Then there is a unique homogeneous homotopy
  equivalence $\CFDa(Y_{0^k})\to\CFDa(Y_{0^k})$.
\end{lemma}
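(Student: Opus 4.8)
The plan is to show that any graded homotopy equivalence $f\co \CFDa(Y_{0^k})\to\CFDa(Y_{0^k})$ is chain homotopic to the identity. Since $\CFDa(Y_{0^k})$ has a single generator $n$, any type $D$ structure morphism is determined by $\delta^1$-style data $f^1(n)\in\Alg(-\PMC_k)\otimes\CFDa(Y_{0^k})$, i.e., $f^1(n)=a\cdot n$ for some $a\in\Alg(-\PMC_k)$ with $I(n)\cdot a\cdot I(n)=a$, where $I(n)$ is the idempotent of $n$. So the first step is to identify which such $a$ can occur: the relevant idempotent corresponds to the subset $\mathbf s=\{1,3,5,\dots,2k-1\}$ (the one compatible with $\delta^1(n)=(\rho_{1,3}+\rho_{5,7}+\cdots+\rho_{4k-3,4k-1})n$), and I would enumerate the strand diagrams $a$ with $I(\mathbf s)aI(\mathbf s)=a$. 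The only such elements are sums of products of the chords $\rho_{4i-3,4i-1}$ over disjoint subsets of $\{1,\dots,k\}$; in particular the degree-$0$ part of the idempotent-preserving subalgebra is just $\Field\cdot I(\mathbf s)$.

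Next I would invoke the grading constraint. By Section~\ref{sec:gradings}, case~(1), the grading set $S_D(Y_{0^k})$ is $G(-\PMC_k)$ modulo a subgroup isomorphic to $\ZZ^k$ mapping isomorphically onto $\ker[H_1(F(\PMC_k))\to H_1(Y_{0^k})]$, and $\lambda$ acts freely. A graded endomorphism of $\CFDa(Y_{0^k})$ of grading $0$ must send $n$ to $a\cdot n$ where $a$ is homogeneous of grading equal to the identity element of $G(-\PMC_k)$ (in the appropriate double-coset sense $S^*\times_{G}S$). Now each nontrivial product of the chords $\rho_{4i-3,4i-1}$ lowers the $\lambda$-grading by a positive amount (this is the genus-$1$ computation of Example~\ref{eg:CFD-gr} applied to each handle summand, using the inclusion $\iota$ of algebras and the fact that gradings are multiplicative under $\DT$ with $[\iota]$), and since $\lambda$ acts freely on the grading set such a term cannot have grading $0$. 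Hence the only graded endomorphisms of grading $0$ are scalar multiples of the identity, i.e., $f^1(n)\in\{0,\ I(\mathbf s)\cdot n\}$ over $\Field$.

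Finally, a homotopy equivalence must induce an isomorphism on homology, so $f^1(n)=0$ is excluded, leaving $f=\Id$ on the nose (no room for a nontrivial chain homotopy either, since $\Mor(\CFDa(Y_{0^k}),\CFDa(Y_{0^k}))$ in grading $0$ is just $\Field$ and the differential on the morphism complex sends degree-$0$ to degree-$(-1)$ data, which lives in a different grading orbit). This gives uniqueness up to homotopy—in fact on the nose. The main obstacle I anticipate is the bookkeeping for the gradings: one has to be careful that ``grading $0$'' is interpreted via the $\ZZ$-set $S^*\times_{G(-\PMC_k)}S$ as in Section~\ref{sec:gradings}, and that the positivity of the $\lambda$-exponent on the chord products $\rho_{4i-3,4i-1}$ genuinely obstructs them—this is where I would lean on Example~\ref{eg:CFD-gr} and the compatibility of gradings with the algebra inclusion $\iota$, rather than recomputing gradings from scratch. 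Everything else is a short finite enumeration.
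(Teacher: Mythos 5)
Your enumeration of the idempotent-preserving subalgebra and your use of Example~\ref{eg:CFD-gr} to kill the chord terms match the paper's argument, but there is a gap at the outset: you restrict attention to graded endomorphisms \emph{of grading $0$} without justifying that a graded homotopy equivalence must have grading $0$. In the paper's convention ``graded'' means homogeneous, not grading-preserving, and there are nonzero homogeneous endomorphisms in gradings other than $0$ --- namely $n\mapsto a_S n$ with $a_S=\prod_{i\in S}\rho_{4i+1,4i+3}$ and $S\neq\emptyset$, which sit in grading $\lambda^{-|S|}$ relative to the identity. Your argument as written never rules out the possibility that one of these is a homotopy equivalence. The paper closes this by first passing to the quotient by the ideal $\mathscr{I}\subset\Alg(-\PMC_k)$ spanned by strand diagrams with a non-horizontal strand: $\CFDa(Y_{0^k})/\mathscr{I}\cong\Field$ with zero differential, a homotopy equivalence must induce a homotopy equivalence there, and hence the idempotent $I(\{1,3,\dots,2k-1\})$ must occur as a summand of $f^1(n)$. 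This pins the grading of $f$ to $0$, and only then does homogeneity (your second paragraph) force the remaining terms to vanish. Your appeal to ``a homotopy equivalence must induce an isomorphism on homology'' is doing the work of this quotient step, but you deploy it only to exclude $f^1(n)=0$ after already assuming grading $0$, which is the wrong order.

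The gap is easily filled --- either by the quotient argument above, or by observing that each nonzero homogeneous endomorphism of nonidentity grading squares to zero (since $\rho_{4i+1,4i+3}^2=0$), so it is nilpotent and cannot be a homotopy equivalence of a non-contractible type $D$ structure. With that step added, the rest of your argument --- the enumeration of $I(\mathbf{s})\Alg(-\PMC_k)I(\mathbf{s})$, the grading computation via the inclusion $\iota$ and Example~\ref{eg:CFD-gr}, and the observation that the grading-$(+1)$ part of the endomorphism complex vanishes so there is no room for homotopies --- is correct and is essentially the same route the paper takes.
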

\begin{proof}
  Let $f^1\co \CFDa(Y_{0^k})\to\CFDa(Y_{0^k})$ be a homogeneous homotopy
  equivalence. Write 
  \[
    f^1(n)=(a_1+\cdots+a_m)n
  \]
  where the $a_i$ are strand diagrams (basic elements of $\Alg(-\PMC_k)$). Let
  $\mathscr{I}\subset\Alg(-\PMC_k)$ denote the ideal spanned by strand diagrams
  not of the form $I(\mathbf{s})$ (i.e., in which at
  least one strand is not horizontal). Then, as algebras,
  \[
    \Alg(-\PMC_k)/\mathscr{I}\cong \bigoplus_{\mathbf{s}\subset\{1,\dots,2k\}}\Field.
  \]
  Let $\CFDa(Y_{0^k})/\mathscr{I}$ be the result of extending scalars
  from $\Alg(-\PMC_k)$ to $\Alg(-\PMC_k)/\mathscr{I}$. Then
  $\CFDa(Y_{0^k})/\mathscr{I}$ is isomorphic to $\Field$, with trivial
  differential. Since $f^1$ must induce a homotopy equivalence
  \[
    \CFDa(Y_{0^k})/\mathscr{I}\to \CFDa(Y_{0^k})/\mathscr{I},
  \]
  it follows that one of the $a_i$, say $a_1$, is the idempotent
  $I(\{1,3,5,7,\dots\})$. That is,
  \[
    f^1(n)=n+(a_2+\cdots+a_m)n
  \]
  where $a_2,\dots,a_m\in\mathscr{I}$.
  
  Next we claim that $a_2=\cdots=a_m=0$. Since both the left
  and right idempotents of $a_i$ must agree with the
  left idempotent $I_n$ of $n$, the $a_i$ are in the algebra generated by 
  \[
    \{\rho_{1,3}I_n,\rho_{5,7}I_n,\cdots\}.
  \]
  As in Example~\ref{eg:CFD-gr}, 
  \[
    \gr(\rho_{4i+1,4i+3}n)=\lambda^{-1}\gr(n).
  \]
  Since $f^1$ is homogeneous and $n$ appears in $f^1(n)$, so every term
  in $f^1(n)$ has the same grading as $n$, it follows that $a_2=\cdots=a_m=0$ and so $f^1(n)=n$.
\end{proof}

Suppose that $\HD$ is a Heegaard diagram for a bordered handlebody and $P$ is a
$G$-set graded type $D$ structure homotopy equivalent to $\CFDa(\HD)$. Then
\[
  H_*\Mor^{\Alg(-\PMC)}(\CFDa(\HD),P)\cong \HFa\bigl((S^1\times S^2)^{\# k}\bigr)
\]
\cite[Theorem 1]{LOTHomPair} is graded by a free $\ZZ$-set (cf.~Section~\ref{sec:gradings}). Further, the homology
lies over a single $\ZZ$-orbit in the grading set. This $\ZZ$-orbit inherits a
total order, by declaring that $a>b$ if $a=\lambda^nb$ for some $n\in\ZZ$.
Thus, it makes sense to talk about a nontrivial homomorphism (that is, a homomorphism whose
image in $H_*\Mor^{\Alg(-\PMC)}(\CFDa(\HD),P)$ is nontrivial) of maximal
grading.  The same discussion holds for type $A$ invariants.

\begin{lemma}\label{lem:HB-rigid} 
  Let $\HD$ be a Heegaard diagram for a bordered handlebody and $P$
  (respectively $M$) a $G$-set-graded type $D$ structure
  (respectively $\Ainf$-module) homogeneous homotopy equivalent to $\CFDa(\HD)$. Then up
  to chain homotopy there is a \emph{unique} homogeneous homotopy equivalence
  $\CFDa(\HD)\to P$ (respectively $\CFAa(\HD)\to M$). Further, this
  homotopy class is represented by any non-trivial homomorphism of maximal
  grading.
\end{lemma}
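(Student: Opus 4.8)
The plan is to reduce the statement to a grading computation in the endomorphism complex of $\CFDa(\HD)$ (resp.\ $\CFAa(\HD)$). First, a graded homotopy equivalence has a graded homotopy inverse: if $f$ is homogeneous of grading $\gamma$ and $g_0$ is any homotopy inverse, decompose $g_0=\sum_{\gamma'}g_{\gamma'}$ into homogeneous pieces; since the differential on the morphism complex is homogeneous, extracting the appropriate homogeneous part of $\Id\simeq f g_0$ shows that $f g_{\gamma^{-1}}\simeq\Id$, so $g_{\gamma^{-1}}$ is a homogeneous right homotopy inverse, and similarly on the left. Consequently, after fixing one graded homotopy equivalence $\Phi\co\CFDa(\HD)\to P$, any other differs from it by a graded homotopy equivalence of $\CFDa(\HD)$, so it suffices to show: (i) every graded homotopy equivalence $\CFDa(\HD)\to\CFDa(\HD)$ is homotopic to $\Id$; and (ii) every non-zero morphism of maximal grading is homotopic to $\Id$. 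The $\Ainf$-module case reduces to the type $D$ case by dualizing, using the duality identifications recalled in the introduction from \cite{LOTHomPair}, or by running the same argument verbatim.

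Next I would compute $\Mor^{\Alg(-\PMC)}(\CFDa(\HD),\CFDa(\HD))$ up to homotopy. By the pairing theorem together with the duality identifications above, this complex is homotopy equivalent to $\overline{\CFDa(\HD)}\DT\CFDa(\HD)$, which computes $\CFa$ of the closed manifold obtained by doubling the handlebody underlying $\HD$, namely $\#^g(S^1\times S^2)$, independently of the parametrization. Hence its homology is $\HFa(\#^g(S^1\times S^2))\cong\Field^{2^g}$. For the gradings: since $\HD$ is a handlebody, $S_D(\HD)$ is $G(-\PMC)$ modulo a $\ZZ^g$ on which $\lambda$ acts freely (Section~\ref{sec:gradings}), so $\lambda$ acts freely on the morphism grading set $(S_D(\HD)\times S_D(\HD))/G$; only the torsion $\SpinC$-structure contributes, picking out a single $\lambda$-orbit and hence a relative $\ZZ$-grading. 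With respect to this grading, $H_*$ agrees with $\HFa(\#^g(S^1\times S^2))\cong\Lambda^*(\Field^g)$, with Poincar\'e polynomial $(1+t^{-1})^g$; in particular it is $1$-dimensional in its maximal grading, say $\gamma_{\max}$ (and in its minimal grading).

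The crux is to show that $\Id$ realizes the grading $\gamma_{\max}$ and that the morphism complex is supported in gradings $\le\gamma_{\max}$. One approach: composition makes $H_*(\Mor)$ a unital ring isomorphic to $\Lambda^*(\Field^g)$ with generators of grading $t^{-1}$, so the unit $[\Id]$ lies in the top grading. Alternatively, and in the spirit of Lemma~\ref{lem:0-HB-rigid}, one argues directly that for fixed pairs of idempotents the "$\lambda$-component" of the gradings of elements of $\Alg(-\PMC)$ is bounded above by the value attained on the idempotent, so no morphism $\CFDa(\HD)\to\CFDa(\HD)$ can have grading exceeding that of $\Id$. Granting this, the grading-$\gamma_{\max}$ part of the complex has no incoming differential — there are no chains in grading $\lambda\gamma_{\max}$ — so its cycles coincide with its homology, a single non-zero class $[\Id]$. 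I expect this boundedness-of-gradings step to be the main obstacle, since it is the place where one genuinely uses the algebra $\Alg(-\PMC)$ and the handlebody gradings rather than formal properties.

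Finally, given a graded homotopy equivalence $f\co\CFDa(\HD)\to\CFDa(\HD)$, post-composition by $f$ is a grading-shifting automorphism of $H_*(\Mor)$; since the support has a unique maximal (and minimal) grading, the shift is trivial, forcing $\gr(f)=\gamma_{\max}$. Then $[f]$ is a non-zero element of $H_*(\Mor)_{\gamma_{\max}}=\Field\cdot[\Id]$, so $[f]=[\Id]$ and $f\simeq\Id$; transporting through $\Phi$ gives uniqueness of the graded homotopy equivalence $\CFDa(\HD)\to P$. Moreover any non-zero morphism $g$ of maximal grading is, by the previous paragraph, a non-zero cycle in the grading-$\gamma_{\max}$ part, hence represents $[\Id]$ and is a homotopy equivalence homotopic to the unique one. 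The corresponding statements for $\CFAa(\HD)\to M$ follow by duality.
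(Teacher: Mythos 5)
Your overall architecture is sound and your opening reductions (a graded homotopy equivalence has a graded homotopy inverse; the set of homotopy classes of graded equivalences $\CFDa(\HD)\to P$ is a torseur over the graded auto-equivalences of $\CFDa(\HD)$) match the paper's first step. From there, however, you take a genuinely different route: you try to compute the full endomorphism complex $\Mor^{\Alg(-\PMC)}(\CFDa(\HD),\CFDa(\HD))\simeq\CFa(\#^g(S^1\times S^2))$ and locate $[\Id]$ in its extremal grading, whereas the paper never computes this morphism space. Instead it proves the statement by hand for the \emph{standard} $0$-framed handlebody (Lemma~\ref{lem:0-HB-rigid}), where $\CFDa(Y_{0^k})$ has a single generator and the only candidate terms are $\rho_{4i+1,4i+3}$'s of grading $\lambda^{-1}$, and then transports the conclusion to an arbitrary handlebody diagram using that $\CFDAa(\phi)\DT\cdot$ and $\CFAAa(\Id)\DT\cdot$ are equivalences of graded homotopy categories (so the set of graded auto-equivalences, and the ``unique class of maximal grading'' property, are preserved).

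The genuine gap is exactly the step you flag as the crux: showing that $[\Id]$ lies in the unique maximal grading of $H_*\Mor$. Neither of your two suggested closings works as stated. Option (a) asserts that the composition ring is $\Lambda^*(\Field^g)$ with generators in grading $\lambda^{-1}$ and unit on top; but identifying the \emph{graded} ring structure of the endomorphism algebra is essentially equivalent to the lemma itself (the pairing theorem gives you $\HFa(\#^g(S^1\times S^2))$ only as a graded vector space, not the placement of $[\Id]$ within it), and one must also be careful that composition of homogeneous morphisms is only homogeneous up to the stabilizer subgroup in the $G$-set grading. Option (b), applied to a general diagram $\HD$, makes a chain-level claim that is false: a stabilized handlebody diagram has generators $x,y$ with $\gr(y)=\lambda^{-1}\gr(x)$, and the elementary morphism $y\mapsto x$ has grading $\lambda\cdot\gr(\Id)$, so the morphism \emph{complex} is not supported in gradings $\leq\gr(\Id)$; only the homological statement can hold, and that is what needs proof. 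The fix is precisely the paper's maneuver: carry out your direct grading computation only for $\CFDa(Y_{0^k})$, where it is a two-line check, and then use the equivalences of homotopy categories to transport it to arbitrary $\HD$ and to the $\Ainf$-module side. With that substitution your argument closes; without it, the key inequality is assumed rather than proved.
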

So, if $\HD$ and $\HD'$ represent the same bordered handlebody, to
find a homotopy equivalence $\CFDa(\HD)\to\CFDa(\HD')$, say, it
suffices to find any grading-preserving, non-nullhomotopic
homomorphism.
\begin{proof}
  First, if $P$ and $Q$ are homotopy equivalent then the set of homotopy classes of homotopy equivalences from $P$ to $Q$ is a torseur for the set of homotopy classes of homotopy equivalences from $P$ to $P$. So, it suffices to prove the lemma in the case that $P=\CFDa(\HD)$ and $M=\CFAa(\HD)$.
  
  If $\HD_0$ represents the standard $0$-framed handlebody then by
  Lemma~\ref{lem:0-HB-rigid} there is a unique homogeneous homotopy equivalence
  $\CFDa(\HD_0)\to\CFDa(\HD_0)$.
  Next, there is a
  mapping class $\phi$ so that $\HD$ represents a handlebody with
  boundary parameterized by $\phi$. Then the pairing theorem gives a homogeneous homotopy
  equivalence
  \begin{equation}
    \label{eq:twist}
    \CFDAa(\phi)\DT\CFDa(\HD_0)\simeq \CFDa(\HD).
  \end{equation}
  Tensoring with $\CFDAa(\phi)$ is an equivalence of homotopy categories
  of $G$-set-graded type $D$ structures, with inverse
  $\CFDAa(\phi^{-1})\DT\cdot$~\cite[Corollary 8.1]{LOT2}, so the set of homotopy classes of
  homogeneous homotopy auto-equivalences of $\CFDAa(\phi)\DT\CFDa(\HD_0)$ is in
  bijection with the set of homotopy classes of homogeneous homotopy
  auto-equivalences of $\CFDa(\HD_0)$. Thus, by Equation~\eqref{eq:twist}
  there is a unique homotopy class of homogeneous homotopy auto-equivalences of
  $\CFDa(\HD)$. Finally,
  \[
    \CFAa(\HD)\simeq \CFAAa(\bId)\DT\CFDa(\HD).
  \]
  Since tensoring with $\CFAAa(\bId)$ is an equivalence of homotopy
  categories, with inverse given by tensoring with
  $\CFDDa(\bId)$~\cite[Corollary 8.1]{LOT2}, there is a unique homotopy class of
  homogeneous homotopy auto-equivalences of $\CFAa(\HD)$.

  For the second part of the statement, observe that any other
  non-trivial homogeneous
  homomorphism $\CFDa(\HD_0)\to\CFDa(\HD_0)$ has grading strictly
  smaller than the identity map. This property, too, is preserved by
  homotopy equivalences and equivalences of the homotopy category.
\end{proof}

There is an analogous result for the bimodules associated to mapping
classes:

\begin{lemma}\label{lem:DD-id-rigid}
  Let $\CFDDa(\bId)$ be the standard type \DD\ bimodule for the trivial
  cobordism (as in Section~\ref{sec:DD-id}). Then there is a unique
  homogeneous homotopy equivalence $\CFDDa(\bId)\to\CFDDa(\bId)$,
  which is also the unique nontrivial homomorphism of maximal grading.
\end{lemma}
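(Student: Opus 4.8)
The plan is to imitate the proof of Lemma~\ref{lem:0-HB-rigid}, using on each of the two tensor factors the ideal $\mathscr{I}$ of strand diagrams containing a non-horizontal strand, and then using the grading to remove whatever freedom remains. So let $f^1\co\CFDDa(\Id)\to\CFDDa(\Id)$ be a graded homotopy equivalence and write
\[
  f^1\bigl(I(\mathbf{s})\otimes I(\mathbf{s}^c)\bigr)=\sum_i (b_i\otimes b_i')\otimes\bigl(I(\mathbf{t}_i)\otimes I(\mathbf{t}_i^c)\bigr),
\]
where the $b_i\in\Alg(\PMC)$ and $b_i'\in\Alg(-\PMC)$ may be taken to be basic strand diagrams; since $f^1$ is a morphism of type \DD\ structures, each term satisfies $b_i=I(\mathbf{s})b_iI(\mathbf{t}_i)$ and $b_i'=I(\mathbf{s}^c)b_i'I(\mathbf{t}_i^c)$. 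The goal is to prove $f^1=\Id$.

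First I would isolate the leading term. Let $\mathscr{I}\subset\Alg(\pm\PMC)$ be the ideal spanned by strand diagrams with a non-horizontal strand, so that $\Alg(\pm\PMC)/\mathscr{I}\cong\bigoplus_{\mathbf{u}}\Field$ as in Lemma~\ref{lem:0-HB-rigid}. Reducing the coefficient algebras of $\CFDDa(\Id)$ modulo $\mathscr{I}$ on both sides, every term of $\delta^1$ dies, since each carries a chord (Section~\ref{sec:DD-id}); so the quotient is $\bigoplus_{\mathbf{s}}\Field\langle I(\mathbf{s})\otimes I(\mathbf{s}^c)\rangle$ with zero differential. The functor ``reduce mod $\mathscr{I}$'' preserves homotopies, so $f^1$ descends to a homotopy equivalence of this quotient, which---the differential being zero---is an honest isomorphism. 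A basic strand diagram $b_i$ with $b_i=I(\mathbf{s})b_iI(\mathbf{t}_i)$ survives the reduction only if $b_i=I(\mathbf{s})$ and $\mathbf{t}_i=\mathbf{s}$, and similarly for $b_i'$; hence the induced isomorphism is diagonal and therefore, over $\Field$, the identity of $\bigoplus_{\mathbf{s}}\Field$. So
\[
  f^1\bigl(I(\mathbf{s})\otimes I(\mathbf{s}^c)\bigr)=\bigl(I(\mathbf{s})\otimes I(\mathbf{s}^c)\bigr)\otimes\bigl(I(\mathbf{s})\otimes I(\mathbf{s}^c)\bigr)+(\text{terms in which }b_i\text{ or }b_i'\text{ contains a chord}).
\]

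Next I would eliminate the remaining terms with the grading. Because the identity term occurs in $f^1$ and $f^1$ is homogeneous for the grading on $\Mor\bigl(\CFDDa(\Id),\CFDDa(\Id)\bigr)$ (Section~\ref{sec:gradings}), $f^1$ lies in the same grading as $\Id$. The grading set $S_{\mathit{DD}}(\Id)$ is free and transitive for each of its two commuting left $G(-\PMC)$-actions, and since $\lambda$ is central the endomorphism complex is graded by a $\ZZ$-set with $\Id$ in degree $0$. Just as $\gr(\rho_{1,3}n)=\lambda^{-1}\gr(n)$ in Example~\ref{eg:CFD-gr}, inserting a chord on either tensor factor strictly lowers this $\ZZ$-grading; so a term in which $b_i$ or $b_i'$ contains a chord has grading different from that of $\Id$ and cannot occur in the homogeneous map $f^1$. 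Therefore $f^1=\Id$; in particular the graded homotopy equivalence $\CFDDa(\Id)\to\CFDDa(\Id)$ is unique---indeed equal to $\Id$ on the nose---just as in Lemma~\ref{lem:HB-rigid}.

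The step I expect to be the main obstacle is the grading computation in the last paragraph: one needs to know the grading on $\CFDDa(\Id)$ and on its endomorphism complex precisely enough to be certain that inserting any chord on either side strictly lowers the relevant $\ZZ$-grading, and that the gradings of the distinct idempotent generators $I(\mathbf{s})\otimes I(\mathbf{s}^c)$ never conspire to permit an off-diagonal term in degree $0$. An alternative that avoids computing directly with $\CFDDa(\Id)$: since $\cdot\DT\CFAAa(\Id)$ and $\cdot\DT\CFDDa(\Id)$ are mutually inverse equivalences of homotopy categories (as used in the proof of Lemma~\ref{lem:HB-rigid}), and $\CFDDa(\Id)$ corresponds under them to the identity type \DA\ bimodule, graded homotopy auto-equivalences of $\CFDDa(\Id)$ are the same as those of the rank-one bimodule $\CFDAa(\Id)$, for which the leading-term-plus-grading argument above is slightly shorter to run.
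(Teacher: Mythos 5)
Your first half---reducing both coefficient algebras modulo the ideal $\mathscr{I}$ of diagrams with a non-horizontal strand, observing that the quotient of $\CFDDa(\Id)$ is $\bigoplus_{\mathbf{s}}\Field$ with zero differential, and concluding that the diagonal idempotent term $\bigl(I(\mathbf{s})\otimes I(\mathbf{s}^c)\bigr)\otimes\bigl(I(\mathbf{s})\otimes I(\mathbf{s}^c)\bigr)$ must appear in $f^1$---is correct and is exactly how the paper begins.

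The gap is in the grading step, and it sits precisely at the point you flag as ``the main obstacle.'' The assertion that ``inserting a chord on either tensor factor strictly lowers this $\ZZ$-grading'' is not justified and is not true in the naive sense. The grading group $G(\PMC)$ is a central extension of $H_1(F(\PMC))$ by $\ZZ$, and $\gr(a(\rho))$ has a nontrivial $H_1$-component (the support of $\rho$); it is not a power of $\lambda$. The computation $\gr(\rho_{1,3}n)=\lambda^{-1}\gr(n)$ in Example~\ref{eg:CFD-gr} holds because $\rho_{1,3}n$ appears in the differential of that particular module, not because chords generically shift a $\ZZ$-grading. A term with a chord on only one factor lies in a different $\ZZ$-orbit of the grading set from the identity's, so it is excluded---but the genuinely dangerous terms are those $a\otimes a'$ with chords on \emph{both} factors whose supports cancel, since these \emph{are} in the same $\ZZ$-orbit as $\Id$ and your argument gives no control over their central $\ZZ$-part. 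The paper closes this gap as follows: after fixing corresponding grading refinement data for $\PMC$ and $-\PMC$ (so all generators of $\CFDDa(\Id)$ lie in a single grading), homogeneity of $f^1$ forces the supports of $a$ and $a'$ to agree, so $a\otimes a'$ lies in the diagonal subalgebra; every basic element there factors as a product of chord-like elements $a(\rho)\otimes a(-\rho)$ by \cite[Lemma 3.5]{LOT4}; and each such factor has grading $\lambda^{-1}$ \emph{because} it appears in the differential of $\CFDDa(\Id)$. Hence a product of $n>0$ chord-like elements has grading $\lambda^{-n}\neq 0$ and cannot occur. Without this factorization through the diagonal subalgebra your proof does not rule out off-identity terms in degree $0$. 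Your proposed alternative of transferring to the rank-one \DA\ identity bimodule would also need care: in the paper the \DA\ statement (Lemma~\ref{lem:MCG-rigid}) is \emph{deduced from} this lemma, so you would need an independent rigidity argument there, including control of the higher $f^1_{1+n}$ components.
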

\begin{proof}
  Since different choices of grading refinement data lead to graded
  chain homotopy equivalent modules $\CFDDa(\bId)$~\cite[Proposition
  6.32]{LOT2}, it suffices to prove the lemma for any choice of
  grading refinement data. Choose any grading refinement data for
  $\PMC$, and work with the induced grading refinement data for
  $-\PMC$. With respect to these choices, all of the generators of
  $\CFDDa(\bId)$ are in the same grading.

  Let $f^1\co\CFDDa(\bId)\to\CFDDa(\bId)$ be a homotopy equivalence.
  Write 
  \[
    f^1(I(\mathbf{s})\otimes
    I(\mathbf{s}^c))=\sum_{\mathbf{t}\subset\{1,\dots,2k\}}\sum_{i}(a_{\mathbf{s},\mathbf{t},i}\otimes
    a'_{\mathbf{s},\mathbf{t},i})\otimes (I(\mathbf{t})\otimes I(\mathbf{t}^c))
  \]
  where the $a_{\mathbf{s},\mathbf{t},i}$ and
  $a'_{\mathbf{s},\mathbf{t},i}$ are strand diagrams. Note that for
  each $\mathbf{s}$, $\mathbf{t}$, and $i$, 
  \begin{align*}
    I(\mathbf{s})a_{\mathbf{s},\mathbf{t},i}I(\mathbf{t})&=a_{\mathbf{s},\mathbf{t},i} &
    I(\mathbf{s}^c)a'_{\mathbf{s},\mathbf{t},i}I(\mathbf{t}^c)&=a'_{\mathbf{s},\mathbf{t},i}.
  \end{align*}

  Considering $\Alg(-\PMC)/\mathscr{I}$ and
  $\Alg(\PMC)/\mathscr{I}$ as in the proof of
  Lemma~\ref{lem:0-HB-rigid} shows that for each generator
  $I(\mathbf{s})\otimes I(\mathbf{s}^c)$, one of the terms
  $a_{\mathbf{s},\mathbf{s},i}$ must be
  $I(\mathbf{s})\otimes I(\mathbf{s}^c)$. We claim that these are the
  only terms in $f^1$.

  To see this note that the fact that $f^1$ is homogeneous implies
  that the supports of $a_{\mathbf{s},\mathbf{t},i}$ and
  $a'_{\mathbf{s},\mathbf{t},i}$ (in $H_1(Z,\mathbf{a})$) must be the
  same. (This statement depends on the fact that we are using
  corresponding grading refinement data for $\PMC$ and $-\PMC$.) That is,
  $a_{\mathbf{s},\mathbf{t},i}\otimes a'_{\mathbf{s},\mathbf{t},i}$
  lies in the \emph{diagonal subalgebra}~\cite[Definition
  3.1]{LOT4}. Every basic element in the diagonal subalgebra can be
  factored as a product of chord-like elements
  $a(\rho)\otimes a(-\rho)$~\cite[Lemma 3.5]{LOT4}. Since
  $(a(\rho)\otimes a(-\rho))\otimes(I(\mathbf{t})\otimes
  I(\mathbf{t}^c))$ occurs in the differential on $\CFDDa(\bId)$, it
  follows that the grading of a product of $n$ chord-like elements is
  $-n$. Thus, since $f^1$ is homogeneous, each term
  $a_{\mathbf{s},\mathbf{t},i}\otimes a'_{\mathbf{s},\mathbf{t},i}$
  must be a product of $0$ chord-like elements, i.e., have the form
  $I(\mathbf{s})\otimes I(\mathbf{s}^c)$. This proves the result.
\end{proof}

\begin{lemma}\label{lem:MCG-rigid}
  If $\phi\co F(\PMC)\to F(\PMC')$ is a mapping class and $M$ is a
  type \DA\ bimodule homogeneous homotopy equivalent to $\CFDAa(\phi)$
  (respectively $\CFAAa(\phi)$, $\CFDDa(\phi)$) then there is a unique
  homogeneous homotopy equivalence between $\CFDAa(\phi)$ (respectively
  $\CFAAa(\phi)$, $\CFDDa(\phi)$) and $M$. Further, the homotopy
  equivalence is the unique non-zero homotopy class of homomorphisms
  of maximal grading.
\end{lemma}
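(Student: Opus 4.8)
The plan is to follow the template of Lemma~\ref{lem:HB-rigid}, reducing all three cases to the rigidity of $\CFDDa(\Id)$ established in Lemma~\ref{lem:DD-id-rigid}. As in that proof, the torseur observation lets us assume $M$ is literally $\CFDAa(\phi)$ (respectively $\CFAAa(\phi)$, $\CFDDa(\phi)$): if $P$ and $Q$ are graded homotopy equivalent, the set of homotopy classes of graded homotopy equivalences $P\to Q$ is a torseur over the group of homotopy classes of graded homotopy self-equivalences of $P$, so it suffices to prove that group is trivial in each case.

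Next I would collapse the three flavors into one using the identity bimodules. The pairing theorem, together with the fact that tensoring with $\CFAAa(\Id)$ and with $\CFDDa(\Id)$ are mutually inverse equivalences of the homotopy categories of $G$-set-graded bimodules \cite[Corollary 8.1]{LOT2}, identifies $\CFDAa(\phi)$, $\CFAAa(\phi)$ and $\CFDDa(\phi)$ up to such equivalences; so their groups of graded homotopy self-equivalences are canonically isomorphic, and it is enough to treat $\CFDDa(\phi)$. For that one, I would twist one of the type $D$ boundaries back to the standard parametrization: by the composition law for mapping cylinders, tensoring $\CFDDa(\phi)$ with $\CFDAa(\phi^{-1})$ on the appropriate side yields $\CFDDa(\Id)$, and this operation is again an equivalence of homotopy categories, with inverse tensoring with $\CFDAa(\phi)$ \cite[Corollary 8.1]{LOT2}. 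Hence the group of graded homotopy self-equivalences of $\CFDDa(\phi)$ is isomorphic to that of $\CFDDa(\Id)$, which is trivial by Lemma~\ref{lem:DD-id-rigid}. This proves the uniqueness assertion.

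For the last sentence, I would carry the maximal-grading property through the same chain of equivalences, exactly as in Lemma~\ref{lem:HB-rigid}. The proof of Lemma~\ref{lem:DD-id-rigid} shows that every non-nullhomotopic endomorphism of $\CFDDa(\Id)$ other than the identity has a term built from at least one chord-like element $a(\rho)\otimes a(-\rho)$, and each such element lowers the grading by $\lambda^{-1}$; so the identity is the unique non-nullhomotopic homomorphism of maximal grading. Each equivalence above is a grading-set equivalence and carries the top graded homotopy class to the top graded homotopy class, so the same statement holds for $\CFDDa(\phi)$, then for $\CFDAa(\phi)$ and $\CFAAa(\phi)$, and finally for $M$ via the torseur identification.

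The step I expect to be the main obstacle is keeping the grading bookkeeping honest: one must check that the pairing-theorem equivalences used above are genuinely equivalences of the homotopy categories of $G$-set-\emph{graded} bimodules (not merely of ungraded ones), so that they induce bijections on graded homotopy classes and preserve the distinguished maximal-grading element. This is precisely where the graded pairing theorem and the grading-refinement-independence results recalled in Section~\ref{sec:gradings} are needed; everything else is bookkeeping parallel to Lemmas~\ref{lem:HB-rigid} and~\ref{lem:DD-id-rigid}.
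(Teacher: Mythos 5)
Your proposal is correct and follows essentially the same route as the paper: reduce all three flavors to $\CFDDa(\phi)$ by tensoring with $\CFAAa(\Id)$, then to $\CFDDa(\Id)$ by tensoring with $\CFDAa(\phi^{-1})$, use the torseur observation to pass to self-equivalences, and conclude from Lemma~\ref{lem:DD-id-rigid} and its proof. Your extra attention to the graded (as opposed to ungraded) nature of these equivalences of homotopy categories is a point the paper leaves implicit but is handled by the references you cite.
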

\begin{proof}
  Since tensoring with $\CFAAa(\bId)$ gives an equivalence of homotopy
  categories, it suffices to prove the statement for
  $\CFDDa(\phi)$. Further, since tensoring with $\CFDAa(\phi)$ gives
  an equivalence of categories, it suffices to prove the statement for
  $\CFDDa(\bId)$. Since the number of homotopy equivalences is
  preserved by homotopy equivalences, it suffices to show there is a
  unique homotopy equivalence $\CFDDa(\bId)\to\CFDDa(\bId)$ and that
  this homotopy equivalence is the unique non-nullhumotopic map of
  maximal grading. So, the result now follows from Lemma~\ref{lem:DD-id-rigid} and its proof.
\end{proof}

\begin{corollary}\label{cor:Phi}
  Up to homotopy, there is a unique homogeneous homotopy equivalence 
  \[
    \Omega\co \lsup{\Alg(\PMC)}[\Id_{\Alg(\PMC)}]_{\Alg(\PMC)}\stackrel{\simeq}{\longrightarrow} \lsup{\Alg(\PMC)}\CFDAa(\bAZ)_{\Alg(\PMC)}\DT\lsup{\Alg(\PMC)}\CFDAa(\AZ)_{\Alg(\PMC)}.
  \]
\end{corollary}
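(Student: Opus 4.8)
The plan is to mimic the proof of Lemma~\ref{lem:HB-rigid}, replacing handlebodies by mapping cylinders: I would deduce the existence of $\Omega$ from a gluing lemma together with the pairing theorem, and its uniqueness from the rigidity statement Lemma~\ref{lem:MCG-rigid}. For existence, recall from the discussion around Equation~\eqref{eq:decomp-iota} and from \cite[Corollary 4.5]{LOTHomPair} (quoted at the end of Section~\ref{sec:AZ}) that the glued Heegaard diagram $\bAZ\cup\AZ$---with the two pieces oriented as dictated by the Convention of Section~\ref{sec:AZ} so that both factors are bimodules over $\Alg(\PMC)$, and with $\alpha$- and $\beta$-boundaries assigned to the left and right actions as in Section~\ref{sec:AZ}---represents the identity cobordism of $F(\PMC)$. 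The pairing theorem for type \DA\ bimodules~\cite{LOT2}, which is compatible with the $G(-\PMC)$- and $G(\PMC)$-set gradings, then gives a graded homotopy equivalence
\[
  \lsup{\Alg(\PMC)}\CFDAa(\bAZ)_{\Alg(\PMC)}\DT\lsup{\Alg(\PMC)}\CFDAa(\AZ)_{\Alg(\PMC)}\;\simeq\;\lsup{\Alg(\PMC)}\CFDAa(\Id_\PMC)_{\Alg(\PMC)}=\lsup{\Alg(\PMC)}[\Id]_{\Alg(\PMC)}.
\]

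To obtain uniqueness I would apply Lemma~\ref{lem:MCG-rigid} to the mapping class $\Id\co F(\PMC)\to F(\PMC)$ and the type \DA\ bimodule $M\coloneqq \CFDAa(\bAZ)\DT\CFDAa(\AZ)$. By the previous paragraph $M$ is graded homotopy equivalent to $\CFDAa(\Id)=\lsup{\Alg(\PMC)}[\Id]_{\Alg(\PMC)}$, so Lemma~\ref{lem:MCG-rigid} yields a unique homotopy class of graded homotopy equivalences $\lsup{\Alg(\PMC)}[\Id]_{\Alg(\PMC)}\to M$, represented by any non-nullhomotopic homomorphism of maximal grading. This homotopy class is the asserted $\Omega$, and it is unique.

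The only step needing care is the bookkeeping in the first paragraph: one must check that the particular tensor product in the statement---with $\bAZ$ on the left and $\AZ$ on the right, the left/right actions assigned as in Section~\ref{sec:AZ}, and the orientations ($\PMC$ versus $-\PMC$) fixed by the Convention---is exactly the one whose underlying glued diagram is the identity cobordism of $F(\PMC)$, rather than, say, the identity of $F(\PMC^\beta)$ or an orientation-reversal; here one uses the reflection relation $\bAZ(\PMC)=-\AZ(-\PMC)$ and, for the split pointed matched circle, the symmetry $-\PMC\cong\PMC$. One must also confirm that the homotopy equivalence produced by the pairing theorem is graded in the sense required by Lemma~\ref{lem:MCG-rigid}, which follows from the compatibility of the pairing theorem with the $G$-set gradings. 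Both points are routine given the conventions already fixed, so I do not anticipate a genuine obstacle; alternatively, one could verify existence directly from the explicit formulas for $\CFDAa(\AZ)$ and $\CFDAa(\bAZ)$ in Section~\ref{sec:AZ}, but the high-road argument above is cleaner.
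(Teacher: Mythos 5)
Your argument is exactly the paper's: existence follows from the fact that $\bAZ\cup\AZ$ represents the identity cobordism together with the pairing theorem, and uniqueness follows from Lemma~\ref{lem:MCG-rigid}. The extra bookkeeping you flag about orientations and gradings is reasonable due diligence but does not change the substance.
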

\begin{proof}
  Since $\bAZ\cup\AZ$ represents the identity diffeomorphism,
  this follows from the pairing theorem and Lemma~\ref{lem:MCG-rigid}.
\end{proof}

\section{Involutive bordered Floer homology}\label{sec:ibF}
We start by proving that the bordered description of $\CFIa$ in the
introduction does, in fact, give $\CFIa$:
\begin{theorem}\label{thm:iota-right}
  Fix bordered Heegaard diagrams $\HD_0,\HD_1$ with
  $\bdy\HD_0=\PMC=-\bdy\HD_1$. Let $Y=Y(\HD_0\cup_\bdy\HD_1)$ be the closed
  $3$-manifold represented by $\HD_0\cup_\bdy\HD_1$. Then, under the
  identification
  $\CFa(Y)\simeq \CFAa(\HD_0)_{\Alg(\PMC)}\DT\lsup{\Alg(\PMC)}\CFDa(\HD_1)$ from
  the pairing theorem~\cite[Theorem 3]{LOT1}, the map
  \[
    \Psi\circ\Omega\circ \eta\co\CFAa(\HD_0)_{\Alg(\PMC)}\DT\lsup{\Alg(\PMC)}\CFDa(\HD_1)\to \CFAa(\HD_0)_{\Alg(\PMC)}\DT\lsup{\Alg(\PMC)}\CFDa(\HD_1)
  \]
  from Formula~\eqref{eq:decomp-iota} is homotopic to the map
  $\iota\co\CFa(Y)\to\CFa(Y)$.
\end{theorem}
\begin{proof}
  In outline, the proof is that, up to homotopy, the map $\eta$ in Formula~\eqref{eq:decomp-iota} agrees with the map $\eta$ in the definition of $\HFIa$, while the composition $\Psi\circ\Omega$ agrees with the map $\Phi$ in the definition of $\HFIa$. To check this we need to verify that:
  \begin{enumerate}
  \item\label{item:easy-com} Up to homotopy, the following diagram commutes:
  \begin{equation}\label{eq:easy-com}
  \mathcenter{\xymatrix{
  \CFAa(\HD_0)\DT\CFDa(\HD_1)\ar[r]^\eta\ar[d] & \CFAa(\conj{\HD_0})\DT\CFDa(\conj{\HD_1})\ar[d]\\
  \CFa(\HD_0\cup\HD_1)\ar[r]_\eta & \CFa(\conj{\HD_0}\cup\conj{\HD_1}),
  }}
  \end{equation}
  where the vertical arrows come from the pairing theorem for bordered Floer homology.
\item\label{item:hard-com} Up to homotopy, the following diagrams commute, where in each case the bottom arrow is the chain homotopy equivalence on $\CFa$ (from~\cite{OS06:HolDiskFour,JT:Naturality}) induced by a sequence of Heegaard moves and the vertical arrows come from the pairing theorem:
  \begin{equation}\label{eq:com-Phi-1}
  \mathcenter{\xymatrix{
  \CFAa(\conj{\HD_0})\DT[\Id]\DT\CFDa(\conj{\HD_1})\ar[r]^-{\Omega_1}\ar[d] & \CFAa(\conj{\HD_0})\DT\CFDAa(\bId)\DT\CFDa(\conj{\HD_1})\ar[d]\\
  \CFa(\conj{\HD_0}\cup\conj{\HD_1})\ar[r] & \CFa(\conj{\HD_0}\cup\bId\cup\conj{\HD_1})
  }}
\end{equation}
  \begin{equation}\label{eq:com-Phi-2}
  \mathcenter{\xymatrix{
  \CFAa(\conj{\HD_0})\DT\CFDAa(\bId)\DT\CFDa(\conj{\HD_1})\ar[r]^-{\Omega_2}\ar[d] & {\begin{array}{l}\CFAa(\conj{\HD_0})\DT\CFDAa(\bAZ)\\
  \qquad\DT\CFDAa(\AZ)\DT\CFDa(\conj{\HD_1})\end{array}}\ar[d]\\
  \CFa(\conj{\HD_0}\cup\bId\cup\conj{\HD_1})\ar[r] & \CFa(\conj{\HD_0}\cup\bAZ\cup\AZ\cup\conj{\HD_1})
  }}
  \end{equation}
  and
  \begin{equation}\label{eq:com-Psi}
  \mathcenter{\xymatrix{
  \CFAa(\conj{\HD_0})\DT\CFDAa(\bAZ)\DT\CFDAa(\AZ)\DT\CFDa(\conj{\HD_1})\ar[d] \ar[r]^-{\Psi}& \CFAa(\HD_0)\DT\CFDa(\HD_1)\ar[d]\\
  \CFa(\conj{\HD_0}\cup\bAZ\cup\AZ\cup\conj{\HD_1}) \ar[r] & \CFa(\HD_0\cup\HD_1).
  }  }
  \end{equation}
\end{enumerate}
(Note that the top-left square of Diagram~\eqref{eq:com-Phi-1} is canonically isomorphic to $\CFAa(\conj{\HD_0})\DT\CFDa(\conj{\HD_1})$.)

The fact that Diagram~\eqref{eq:easy-com} commutes is straightforward from either proof of the pairing theorem. For example, the time-dilation proof~\cite[Chapter 9]{LOT1} has two steps. In the first, one chooses complex structures $j_n$ on $\HD_0\cup\HD_1$ with increasingly long necks around $\bdy\HD_0=\bdy\HD_1$. For $n$ sufficiently large, the differential on $\CFa(\HD_0\cup\HD_1)$ agrees with a count of pairs of holomorphic curves in $\HD_0$ and $\HD_1$, subject to a matching condition. We may as well assume that $\CFa(\HD_0\cup\HD_1)$ is computed with respect to one of these sufficiently large $j_n$. One then deforms the matching condition and observes that after a sufficiently large deformation the resulting differential agrees with $\CFAa(\HD_0)\DT\CFDa(\HD_1)$. Complexes with different deformation parameters are chain homotopy equivalent. Now, if one chooses the conjugate complex structure to $j_n$ on $\conj{\HD_0}\cup\conj{\HD_1}$ and then performs exactly the same deformation, at every stage the moduli spaces of holomorphic curves for $(\HD_0,\HD_1)$ and $(\conj{\HD_0},\conj{\HD_1})$ are identified. Thus, Diagram~\eqref{eq:easy-com} can be chosen to commute on the nose.
(The argument via the nice diagrams proof~\cite[Chapter 8]{LOT1} is even simpler, and is left as an exercise.)

  Consider next Diagram~\eqref{eq:com-Psi}. By a similar argument to the one just given, it suffices to show that the corresponding diagram
\[
  \xymatrix{
  \CFAa(\conj{\HD_0}\cup\bAZ)\DT\CFDa(\AZ\cup\conj{\HD_1})\ar[d] \ar[r]^{\Psi}& \CFAa(\HD_0)\DT\CFDa(\HD_1)\ar[d]\\
  \CFa(\conj{\HD_0}\cup\bAZ\cup\AZ\cup\conj{\HD_1}) \ar[r] & \CFa(\HD_0\cup\HD_1).
  } 
\]  
homotopy commutes. Recall that $\Psi$ is the box product of maps $\Psi_0$ and $\Psi_1$, induced by Heegaard moves from $\conj{\HD_0}\cup \bAZ$ to $\HD_0$ and
from $\AZ\cup\conj{\HD_1}$ to $\HD_1$, respectively. By definition, $\Psi_0\DT\Psi_1=(\Psi_0\DT\Id)\circ(\Id\DT\Psi_1)$, but this is canonically homotopic to $(\Id\DT\Psi_1)\circ(\Psi_0\DT\Id)$~\cite[Section 3.2]{LOT2}. Thus, we can break this into two steps, by considering the diagram
\[
  {\small
  \xymatrix{
  \CFAa(\conj{\HD_0}\cup\bAZ)\DT\CFDa(\AZ\cup\conj{\HD_1})\ar[d] \ar[r]^-{\Psi_0\DT\Id}&
  \CFAa(\HD_0)\DT\CFDa(\AZ\cup\conj{\HD_1}) \ar[r]^-{\Id\DT\Psi_1}\ar[d] & \CFAa(\HD_0)\DT\CFDa(\HD_1)\ar[d]\\
  \CFa(\conj{\HD_0}\cup\bAZ\cup\AZ\cup\conj{\HD_1}) \ar[r] & \CFa(\HD_0\cup\AZ\cup\conj{\HD_1})\ar[r] & \CFa(\HD_0\cup\HD_1).
  } }
\]
The proofs of commutativity of the two squares are essentially the same, so we will focus on the left square. 
We can relate $\conj{\HD_0}\cup\bAZ$ to $\HD_0$ by a sequence of bordered Heegaard moves; let $\HD^1,\HD^2,\cdots,\HD^k$ be the sequence of bordered Heegaard diagrams obtained by doing these moves one at a time, with $\HD^1=\conj{\HD_0}\cup\bAZ$ and $\HD^k=\HD_0$. There is a corresponding sequence of closed Heegaard diagrams 
\[
\HD^1\cup\AZ\cup\conj{\HD_1},\ \HD^2\cup\AZ\cup\conj{\HD_1},\ \cdots,\ \HD^k\cup\AZ\cup\conj{\HD_1},
\]
each successive pair of which is related by a Heegaard move. So, it suffices to check that:
\begin{lemma}\label{lem:inv-maps-agree}
If $\HD^i$ and $\HD^{i+1}$ are bordered Heegaard diagrams related by a bordered Heegaard move and $\HD'$ is another bordered Heegaard diagram with $\bdy\HD'=-\bdy\HD^i$ then the diagram
\[
  \xymatrix{
  \CFAa(\HD^i)\DT\CFDa(\HD')\ar[d] \ar[r]&
  \CFAa(\HD^{i+1})\DT\CFDa(\HD')\ar[d]\\
  \CFa(\HD^i\cup\HD') \ar[r] & \CFa(\HD^{i+1}\cup\HD').
  } 
\]
commutes up to homotopy. (Here, the horizontal arrows come from the invariance proofs for bordered and classical Heegaard Floer homology.)
\end{lemma}
\begin{proof}
For stabilizations (near the basepoint $z$), this is obvious: if $y$ is the intersection point between the new $\alpha$-circle and the new $\beta$-circle then both horizontal maps send a generator $\x$ to $\x\cup\{y\}$, and none of the moduli spaces used to define the vertical maps are affected. For handleslides, both horizontal maps are defined by counting holomorphic triangles, and the fact that this diagram commutes up to homotopy is a special case of the pairing theorem for triangles~\cite{LOT:DCov2}. For isotopies, commutativity follows by imitating the proof of the pairing theorem but with dynamic boundary conditions.
\end{proof}

Commutativity of Diagram~\eqref{eq:com-Phi-2} follows from a similar argument. Here, the horizontal maps come from a sequence of Heegaard moves relating the identity Heegaard diagram to the diagram $\bAZ\cup\AZ$. Working one Heegaard move at a time, the result follows from the obvious bimodule analogue of Lemma~\ref{lem:inv-maps-agree}.

For Diagram~\eqref{eq:com-Phi-1}, note that there are two homotopy equivalences
\[
  \CFAa(\conj{\HD_0})\cong \CFAa(\conj{\HD_0})\DT[\Id]\to \CFAa(\conj{\HD_0})\DT\CFDAa(\bId),
\]
one given by a sequence of Heegaard moves from $\conj{\HD_0}$ to $\conj{\HD_0}\cup\bId$ and the pairing theorem, and the other given by tensoring with the homotopy equivalence $[\Id]\simeq \CFDAa(\bId)$. The second of these is the map $\Omega_1$, while for the first of these Diagram~\eqref{eq:com-Phi-1} clearly commutes. So, it suffices to show these two maps are homotopic. In the case that $\HD_0$ represents a handlebody, this follows from Lemma~\ref{lem:HB-rigid}. For the general case, since tensoring with $\CFDDa(\Id)$ is a quasi-equivalence of \dg categories, it suffices to show that the two maps 
\[
  \CFAa(\conj{\HD_0})\DT\CFDDa(\bId)\to \CFAa(\conj{\HD_0})\DT\CFDAa(\bId)\DT\CFDDa(\bId),
\]
one induced by a sequence of Heegaard moves and the other induced by the equivalence $[\Id]\simeq \CFDAa(\bId)$, are homotopic.  By homotopy associativity of the box tensor product and the pairing theorem (see~\cite{LOT2}), it suffices to show that the two maps
\[
  \CFDDa(\bId)\to \CFDAa(\bId)\DT\CFDDa(\bId),
\]
one given by a sequence of Heegaard moves and the other by the
equivalence $[\Id]\simeq \CFDAa(\bId)$, are homotopic. This last statement follows from rigidity of $\CFDDa(\bId)$, Lemma~\ref{lem:DD-id-rigid}.
\end{proof}

Next we abstract the bordered information required to compute
involutive Heegaard Floer homology.

\begin{definition}
  Fix a pointed matched circle $\PMC$. An \emph{involutive type $D$
    module} over $\Alg(\PMC)$ consists of a pair
  $(\lsup{\Alg(\PMC)}P,\Psi_P)$ where $\lsup{\Alg(\PMC)}P$ is a type $D$
  structure over $\Alg(\PMC)$ and
  \[
    \Psi_P\co \lsup{\Alg(\PMC)}\CFDAa(\AZ)_{\Alg(\PMC)}\DT\lsup{\Alg(\PMC)}P\to \lsup{\Alg(\PMC)}P
  \]
  is a homotopy equivalence of type $D$ structures. We call two involutive
  type $D$ structures $(\lsup{\Alg(\PMC)}P,\Psi_P)$ and
  $(\lsup{\Alg(\PMC)}Q,\Psi_Q)$ \emph{equivalent} if there is a type $D$
  structure homotopy equivalence
  $g\co \lsup{\Alg(\PMC)}P\to\lsup{\Alg(\PMC)}Q$ so that $g\circ \Psi_P$
  is homotopic to $\Psi_Q\circ (\Id\DT g)$.
  
  Similarly, an \emph{involutive $\Ainf$-module} over $\Alg(\PMC)$ consists of a pair
  $(M_{\Alg(\PMC)},\Psi_M)$ where $M_{\Alg(\PMC)}$ is an $\Ainf$-module over $\Alg(\PMC)$ and
  \[
    \Psi_M\co M_{\Alg(\PMC)}\DT\lsup{\Alg(\PMC)}\CFDAa(\bAZ)_{\Alg(\PMC)}\to M_{\Alg(\PMC)}
  \]
  is a homotopy equivalence of $\Ainf$-modules. We call involutive
  $\Ainf$-modules $(M_{\Alg(\PMC)},\Psi_M)$ and
  $(N_{\Alg(\PMC)},\Psi_N)$ \emph{equivalent} if there is an
  $\Ainf$-module homotopy equivalence
  $g\co M_{\Alg(\PMC)}\to N_{\Alg(\PMC)}$ so that $g\circ\Psi_M$ is
  homotopic to $\Psi_N\circ (g\DT \Id)$.
\end{definition}

\begin{definition}
  Given a bordered $3$-manifold $Y$ with boundary $-F(\PMC)$ and
  bordered Heegaard diagram $\HD$ for $Y$, let
  $\CFDIa(\HD)=(\CFDa(\HD),\Psi_D)$ be the involutive type $D$ module
  where $\Psi_D$ is the map
  \[
    \lsup{\Alg(\PMC)}\CFDAa(\AZ)_{\Alg(\PMC)}\DT\lsup{\Alg(\PMC)}\CFDa(\HD)
    \stackrel{\simeq}{\longrightarrow} \lsup{\Alg(\PMC)}\CFDa(\AZ\cup \conj{\HD})\stackrel{\simeq}{\longrightarrow} 
    \lsup{\Alg(\PMC)}\CFDa(\HD)
  \]
  in which the first equivalence is given by the pairing theorem and the
  second is induced by some sequence of Heegaard moves from
  $\AZ\cup\conj{\HD}$ to $\HD$.

  Similarly, given a bordered $3$-manifold $Y$ with boundary $F(\PMC)$
  and bordered Heegaard diagram $\HD$ for $Y$, let
  $\CFAIa(\HD)=(\CFAa(\HD),\Psi_A)$ be the involutive $\Ainf$-module where
  $\Psi_A$ is the map
  \[
    \CFAa(\HD)_{\Alg(\PMC)}\DT\lsup{\Alg(\PMC)}\CFDAa(\bAZ)_{\Alg(\PMC)}
    \stackrel{\simeq}{\longrightarrow} \CFAa(\conj{\HD}\cup \bAZ)_{\Alg(\PMC)}\stackrel{\simeq}{\longrightarrow} 
    \CFAa(\HD)_{\Alg(\PMC)}
  \]
  in which the first equivalence is given by the pairing theorem and the
  second is induced by some sequence of Heegaard moves from
  $\conj{\HD}\cup\bAZ$ to $\HD$.
\end{definition}

\begin{conjecture}\label{conj:bord-inv}
  The involutive type $D$ structure $\CFDIa(\HD)$ and involutive
  $\Ainf$-module $\CFAIa(\HD)$ are invariants of the bordered
  $3$-manifold $Y$.
\end{conjecture}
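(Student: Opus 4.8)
The plan is to follow the template of the invariance proof for involutive Heegaard Floer homology~\cite{HM:involutive}, with the bordered pairing theorem for Heegaard-move-induced maps (Lemma~\ref{lem:inv-maps-agree} and its bimodule analogue, both used already in the proof of Theorem~\ref{thm:iota-right}) playing the role there played by naturality of the closed pairing. I will describe the argument for $\CFDIa$; the case of $\CFAIa$ is parallel, exchanging $\AZ$ and $\bAZ$ and the $\alpha$- and $\beta$-boundaries. One first checks that the relation on involutive type $D$ modules is genuinely an equivalence relation; transitivity, for instance, follows because $\CFDAa(\AZ)\DT(-)$ is a functor, so if $g\co P\to Q$ satisfies $g\circ\Psi_P\simeq\Psi_Q\circ(\Id\DT g)$ and $h\co Q\to R$ satisfies $h\circ\Psi_Q\simeq\Psi_R\circ(\Id\DT h)$ then $h\circ g\circ\Psi_P\simeq\Psi_R\circ(\Id\DT(h\circ g))$. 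Since any two bordered Heegaard diagrams for $Y$ are joined by a finite sequence of bordered Heegaard moves, it then suffices to show that a single move $\HD\rightsquigarrow\HD'$ carries $\CFDIa(\HD)=(\CFDa(\HD),\Psi_D)$ to an equivalent involutive type $D$ module $\CFDIa(\HD')=(\CFDa(\HD'),\Psi_D')$.

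So fix such a move. It induces a homotopy equivalence $g\co\CFDa(\HD)\to\CFDa(\HD')$~\cite{LOT1}; exchanging the $\alpha$- and $\beta$-curves produces a conjugate move $\conj{\HD}\rightsquigarrow\conj{\HD'}$, hence a move $\AZ\cup\conj{\HD}\rightsquigarrow\AZ\cup\conj{\HD'}$ and an induced homotopy equivalence $\bar g\co\CFDa(\AZ\cup\conj{\HD})\to\CFDa(\AZ\cup\conj{\HD'})$. The goal is $g\circ\Psi_D\simeq\Psi_D'\circ(\Id_{\CFDAa(\AZ)}\DT g)$. Write $\Psi_D=F\circ p$, where $p\co\CFDAa(\AZ)\DT\CFDa(\HD)\to\CFDa(\AZ\cup\conj{\HD})$ is the pairing-theorem equivalence (which absorbs the geometric identification $\eta$) and $F\co\CFDa(\AZ\cup\conj{\HD})\to\CFDa(\HD)$ is induced by a sequence of Heegaard moves from $\AZ\cup\conj{\HD}$ to $\HD$, those being diffeomorphic bordered diagrams~\cite[Lemma~4.6]{LOTHomPair}; write $\Psi_D'=F'\circ p'$ similarly. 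The verification then has two parts. First, the square with horizontal maps $p,p'$ and vertical maps $\Id\DT g$ and $\bar g$ commutes up to homotopy: since $\eta$ sends a generator to the same subset of the Heegaard surface, it commutes with the pair consisting of a bordered move and its conjugate, so this is precisely Lemma~\ref{lem:inv-maps-agree} and its bimodule analogue, applied as in the verification of Diagram~\eqref{eq:easy-com}. Second, one chooses for $\Psi_D'$ the sequence of Heegaard moves obtained by concatenating the reverse of $\AZ\cup\conj{\HD}\rightsquigarrow\AZ\cup\conj{\HD'}$, the already-chosen sequence $\AZ\cup\conj{\HD}\rightsquigarrow\HD$, and the move $\HD\rightsquigarrow\HD'$; since Heegaard-move-induced maps compose, $F'\circ\bar g\simeq g\circ F$. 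Combining, $g\circ\Psi_D=g\circ F\circ p\simeq F'\circ\bar g\circ p\simeq F'\circ p'\circ(\Id\DT g)=\Psi_D'\circ(\Id\DT g)$.

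The one ingredient not already supplied by this paper is that $\Psi_D$ --- and hence the equivalence class of $\CFDIa(\HD)$ --- does not depend on the sequence of Heegaard moves chosen in its definition; equivalently, that any two sequences of bordered Heegaard moves between the same two bordered diagrams induce homotopic maps on $\CFDa$. This weak naturality of bordered Floer homology is the bordered analogue of the Juh\'asz--Thurston naturality theorem for $\HFa$~\cite{JT:Naturality}, and I expect it to be the main obstacle: although widely expected and implicit in much of the bordered literature, it does not seem to have been established with the care needed here --- in particular, one must control the interaction of Heegaard moves with the basepoint on $\bdy\Sigma$ and with the grading by $G(\PMC)$-sets. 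Granting it, the computation above upgrades to a proof of the conjecture; without it, one obtains only that $\CFDIa(\HD)$ is well defined up to the stated equivalence once a Heegaard diagram and a sequence of moves are fixed, and that passing to a diagram joined to it by one bordered Heegaard move yields an equivalent object.
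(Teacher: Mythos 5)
This statement is a \emph{conjecture} in the paper, not a theorem: the authors explicitly decline to prove it, writing that ``the missing ingredient\ldots is an analogue of Ozsv\'ath--Szab\'o--Juh\'asz--Thurston--Zemke's naturality theorem,'' i.e.\ that the maps $\Psi_D$ and $\Psi_A$ are not known to be independent of the chosen sequence of Heegaard moves. Your proposal does not close that gap --- and, to your credit, you say so explicitly in your final paragraph. The obstruction you identify (homotopy-independence of Heegaard-move-induced maps on $\CFDa$ from the choice of move sequence, with control of the boundary basepoint and the $G(\PMC)$-set grading) is precisely the one the paper identifies, so your assessment of the state of the problem agrees with the authors'. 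What you have written is therefore a conditional argument, not a proof, and it should be presented as such.

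Granting bordered naturality, your reduction is sound and follows the route one would expect: equivalence of involutive type $D$ modules is transitive because $\CFDAa(\AZ)\DT(-)$ is a functor; one reduces to a single bordered Heegaard move; the square involving the pairing-theorem equivalences commutes by Lemma~\ref{lem:inv-maps-agree} and its bimodule analogue (exactly as in the proof of Theorem~\ref{thm:iota-right}); and the square involving the move-induced maps commutes because --- \emph{assuming naturality} --- one may freely replace one move sequence from $\AZ\cup\conj{\HD'}$ to $\HD'$ by the concatenation you describe. Note that naturality is needed even before your induction starts: without it, $\Psi_D$, and hence the object $\CFDIa(\HD)$ itself, is not well defined for a \emph{fixed} diagram $\HD$, which is why the paper can only assert the conjecture in general. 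One remark you could add: the paper observes that the conjecture does hold when $Y$ is a handlebody, via the rigidity result Lemma~\ref{lem:HB-rigid} --- there the graded homotopy equivalence is unique up to homotopy, so the choice of move sequence is irrelevant and the naturality input is not needed. Your argument specializes correctly to that case, but you do not point out that this is the one setting where the missing ingredient can be bypassed.
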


The missing ingredient to prove Conjecture~\ref{conj:bord-inv} is an analogue of
Ozsv\'ath-Szab\'o-Juh\'asz-Thurston-Zemke's naturality theorem. That is, we do
not know that the maps $\Psi_A$ and $\Psi_D$ are independent of the choice of
sequence of Heegaard moves. In the special case that $Y$ is a handlebody,
Conjecture~\ref{conj:bord-inv} follows from
Lemma~\ref{lem:HB-rigid}. In general, it is
not even known that $\CFDIa(\HD)$ and $\CFAIa(\HD)$ are invariants of the
Heegaard diagram $\HD$, since as far as we know different sequences of Heegaard
moves would give different maps $\Psi_D$ and $\Psi_A$.

The rest of this paper does \emph{not} depend on Conjecture~\ref{conj:bord-inv}.

\begin{definition}
  The tensor product
  \[(M_{\Alg(\PMC)},\Psi_M)\DT(\lsup{\Alg(\PMC)}P,\Psi_P)
  \]
  of an involutive type $D$ structure $(\lsup{\Alg(\PMC)}P,\Psi_P)$ and an involutive $\Ainf$-module $(M_{\Alg(\PMC)},\Psi_M)$ is the mapping cone of the map
  \[
    \xymatrix{
      M\DT P\cong M\DT[\Id]\DT P \ar[rrrr]^-{\Id+[(\Psi_M\DT\Psi_P)\circ (\Id\DT \Omega\DT\Id)]} &&&& M\DT P
    }
  \]
  where $\Omega\co [\Id]\to \CFDAa(\bAZ)\DT\CFDAa(\AZ)$ is the
  homotopy equivalence from Corollary~\ref{cor:Phi}. This tensor
  product is a differential module over $\Field[Q]/(Q^2)$ in an
  obvious way.
\end{definition}

\begin{lemma}
  If $(\lsup{\Alg(\PMC)}P,\Psi_P)$ and $(\lsup{\Alg(\PMC)}Q,\Psi_Q)$ (respectively $(M_{\Alg(\PMC)},\Psi_M)$ and $(N_{\Alg(\PMC)},\Psi_N)$) are equivalent involutive type $D$ structures (respectively $\Ainf$-modules) over $\Alg(\PMC)$ then the box tensor products $(M_{\Alg(\PMC)},\Psi_M)\DT(\lsup{\Alg(\PMC)}P,\Psi_P)$ and $(N_{\Alg(\PMC)},\Psi_M)\DT(\lsup{\Alg(\PMC)}Q,\Psi_P)$ are quasi-isomorphic differential modules over $\Field[Q]/(Q^2)$.
\end{lemma}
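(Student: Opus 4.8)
The plan is to reduce everything to formal homological algebra over $\Field[Q]/(Q^2)$, using the mapping-cone description of the box tensor product. Recall that, by definition, $(M_{\Alg(\PMC)},\Psi_M)\DT(\lsup{\Alg(\PMC)}P,\Psi_P)$ is the mapping cone of the endomorphism $\Id+F_{M,P}$ of $M\DT P$, where $F_{M,P}=(\Psi_M\DT\Psi_P)\circ(\Id\DT\Omega\DT\Id)$ and $\Omega\co\CFDAa(\Id)\to\CFDAa(\bAZ)\DT\CFDAa(\AZ)$ is the equivalence of Corollary~\ref{cor:Phi}; the $\Field[Q]/(Q^2)$-module structure is the standard one on a cone, with $Q$ acting by the degree-shift inclusion of the first summand into the second. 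So it suffices to produce a $\Field[Q]/(Q^2)$-linear quasi-isomorphism $\Cone(\Id+F_{M,P})\to\Cone(\Id+F_{N,Q})$.

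Let $g\co M\to N$ and $h\co P\to Q$ be homotopy equivalences witnessing the two equivalences of involutive modules, so that $g\circ\Psi_M\simeq\Psi_N\circ(g\DT\Id)$ and $h\circ\Psi_P\simeq\Psi_Q\circ(\Id\DT h)$. First, the composite $g\DT h\coloneqq(\Id_N\DT h)\circ(g\DT\Id_P)\co M\DT P\to N\DT Q$ is a homotopy equivalence, since tensoring a homotopy equivalence of $\Ainf$-modules (respectively type $D$ structures) with the identity morphism of a fixed type $D$ structure (respectively $\Ainf$-module) is again a homotopy equivalence. Second, I claim $g\DT h$ intertwines $\Id+F_{M,P}$ with $\Id+F_{N,Q}$ up to homotopy, i.e.\ $(g\DT h)\circ F_{M,P}\simeq F_{N,Q}\circ(g\DT h)$. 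This is a direct computation: substituting the definition of $F_{M,P}$, one pushes $g$ and $h$ through $\Psi_M\DT\Psi_P$ using the two compatibility homotopies above, turning $(g\DT h)\circ(\Psi_M\DT\Psi_P)$ into $(\Psi_N\DT\Psi_Q)\circ(g\DT\Id\DT\Id\DT h)$ up to homotopy, and then commutes $g$, $\Id$, $h$ past $\Id\DT\Omega\DT\Id$ — legitimate up to homotopy because $g$, $\Omega$, and $h$ act on pairwise disjoint tensor factors, so the relevant interchange of box tensor products of morphisms commutes up to a canonical homotopy (cf.\ the functoriality properties of $\DT$ in~\cite{LOT2}). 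Assembling these homotopies, and noting that $\Id$ commutes with $\Id$ on the nose, gives $(g\DT h)\circ(\Id+F_{M,P})\simeq(\Id+F_{N,Q})\circ(g\DT h)$.

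Given a homotopy-commuting square of this shape, the standard mapping-cone construction produces a chain map $\Cone(\Id+F_{M,P})\to\Cone(\Id+F_{N,Q})$ whose diagonal part is $g\DT h$ on each summand and whose single off-diagonal entry is the chosen homotopy. This map is lower-triangular with respect to the cone decomposition, hence commutes with the $Q$-action, so it is $\Field[Q]/(Q^2)$-linear; and since $g\DT h$ is a quasi-isomorphism, applying the five lemma to the long exact sequences of the two mapping cones shows the induced map is a quasi-isomorphism. The argument for equivalent $\Ainf$-modules is word-for-word the same, interchanging the roles of $\AZ$ and $\bAZ$ and of the left and right tensor factors as dictated by the definitions.

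The main obstacle is the second paragraph: making ``push $g,h$ through $\Psi_M\DT\Psi_P$ and commute past $\Omega$'' fully rigorous requires careful bookkeeping of $\Ainf$ homotopies and an appeal to the (standard but unglamorous) coherence of box tensor products of morphisms supported on disjoint factors; once that homotopy-commuting square is in hand, the remainder is routine homological algebra over $\Field[Q]/(Q^2)$.
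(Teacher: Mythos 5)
Your argument is correct, and it is precisely the routine verification the authors had in mind: the paper's own ``proof'' of this lemma is the single sentence ``The proof is straightforward and is left to the reader,'' and the intended content is exactly your mapping-cone comparison --- produce the homotopy equivalence $g\DT h$ on the underlying complexes, check it intertwines the two cone maps up to homotopy using the defining compatibilities of the involutive structures together with the up-to-homotopy interchange of $\DT$ on morphisms supported on disjoint factors, and assemble the lower-triangular map of cones, which is visibly $Q$-equivariant and a quasi-isomorphism. The only cosmetic point is that you have (correctly) read $(N,\Psi_M)\DT(Q,\Psi_P)$ in the statement as the evident typo for $(N,\Psi_N)\DT(Q,\Psi_Q)$.
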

\begin{proof}
  The proof is straightforward and is left to the reader.
\end{proof}

The following is the pairing theorem for involutive bordered Floer
homology:
\begin{theorem}
  Fix bordered Heegaard diagrams $\HD_1$ and $\HD_2$ with
  $\bdy\HD_1=\PMC=-\bdy\HD_2$. Then there is a chain homotopy
  equivalence
  \[
    \CFIa(\HD_1\cup_\bdy\HD_2)\simeq \CFAIa(\HD_1)\DT\CFDIa(\HD_2).
  \]
\end{theorem}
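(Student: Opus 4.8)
The plan is to unwind both sides of the asserted equivalence as mapping cones over $\Field[Q]/(Q^2)$ and to compare them using Theorem~\ref{thm:iota-right}. Write $Y=Y(\HD_1\cup_\bdy\HD_2)$. By definition, $\CFIa(\HD_1\cup_\bdy\HD_2)$ is, up to the grading shift $[-1]$, the mapping cone of $\Id+\iota$ on $\CFa(Y)$, equipped with its evident $\Field[Q]/(Q^2)$-module structure (with $Q$ the degree-raising map sending the first copy of $\CFa(Y)$ isomorphically onto the second and killing the second). On the other side, by the definition of the tensor product of involutive modules, $\CFAIa(\HD_1)\DT\CFDIa(\HD_2)$ is the mapping cone of $\Id+g$ on $\CFAa(\HD_1)\DT\CFDa(\HD_2)$, where
\[
  g=(\Psi_A\DT\Psi_D)\circ(\Id\DT\Omega\DT\Id)
\]
and $\Omega\co\CFDAa(\Id)\to\CFDAa(\bAZ)\DT\CFDAa(\AZ)$ is the homotopy equivalence of Corollary~\ref{cor:Phi}. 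Let $\Theta\co\CFAa(\HD_1)\DT\CFDa(\HD_2)\stackrel{\simeq}{\longrightarrow}\CFa(Y)$ be the pairing-theorem homotopy equivalence, with a homotopy inverse $\Theta^{-1}$. It then suffices to produce a $Q$-equivariant chain map $\Cone(\Id+g)\to\Cone(\Id+\iota)$ covering $\Theta$ and to check that it is a quasi-isomorphism.

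The crux is the identification $\Theta\circ g\simeq\iota\circ\Theta$. Expand $\Psi_A$ and $\Psi_D$ according to their definitions: each is a composition of an $\eta$-isomorphism on the relevant $\HD_i$-factor, a pairing-theorem equivalence, and a map induced by a sequence of Heegaard moves (from $\conj{\HD_1}\cup\bAZ$ to $\HD_1$, respectively from $\AZ\cup\conj{\HD_2}$ to $\HD_2$). Since $\Omega$ involves only the factors $\CFDAa(\Id)$, $\CFDAa(\bAZ)$, and $\CFDAa(\AZ)$, none of which depends on $\HD_1$ or $\HD_2$, it commutes---up to the coherence isomorphisms of $\DT$---with these $\eta$-isomorphisms, and the insertion of $\CFDAa(\Id)$ is itself an isomorphism. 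Reorganizing the composition accordingly, $g$ becomes homotopic to the inner composition $\CFAa(\HD_1)\DT\CFDa(\HD_2)\to\CFAa(\HD_1)\DT\CFDa(\HD_2)$ appearing in Formula~\eqref{eq:decomp-iota} (everything lying between the two outer pairing-theorem equivalences): the map $\eta$ there is the pair of $\eta$-isomorphisms built into $\Psi_A$ and $\Psi_D$, its $\Omega$ is ours, and its $\Psi$ is the combination---via the pairing theorem for Heegaard moves, i.e.\ Lemma~\ref{lem:inv-maps-agree}---of the two Heegaard-move maps built into $\Psi_A$ and $\Psi_D$ together with the intervening pairing-theorem equivalences. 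Theorem~\ref{thm:iota-right} says that the full composition of Formula~\eqref{eq:decomp-iota}, which is $\Theta\circ g\circ\Theta^{-1}$, is homotopic to $\iota$; hence $\Theta\circ g\simeq\iota\circ\Theta$, and we fix a chain homotopy $H$ with $\bdy H+H\bdy=\Theta\circ g+\iota\circ\Theta$.

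Finally, assemble the comparison of mapping cones. The triple $(\Theta,\Theta,H)$ forms a homotopy-commuting square relating $\Id+g$ to $\Id+\iota$: over $\Field$ one has $\Theta+\Theta=0$, so the required homotopy identity reduces to $\bdy H+H\bdy=\Theta\circ g+\iota\circ\Theta$, which we have just arranged. The standard recipe then produces a chain map
\[
  \begin{pmatrix}\Theta&0\\ H&\Theta\end{pmatrix}\co\Cone(\Id+g)\longrightarrow\Cone(\Id+\iota),
\]
in which the two rows and columns correspond to the two copies of the underlying complex. One checks directly from this matrix form that the map commutes with the $Q$-action (which exchanges the copies as described above). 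Applying the five lemma to the tautological short exact sequences $0\to\CFa(Y)\to\Cone(\Id+\iota)\to\CFa(Y)\to 0$ and its analogue for $\Id+g$, with $\Theta$ on both the sub and the quotient, shows that the cone map is a quasi-isomorphism; since both cones are bounded complexes of free $\Field[Q]/(Q^2)$-modules, it is in fact a chain homotopy equivalence of $\Field[Q]/(Q^2)$-modules. Matching the grading-shift conventions then yields the equivalence of the statement.

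The only genuine work is in the middle paragraph. One must check carefully that the two independently chosen Heegaard-move sequences used inside $\Psi_A$ and $\Psi_D$ do assemble, after gluing $\HD_1$ and $\HD_2$ and applying the pairing theorem, into a closed Heegaard-move map of precisely the kind treated in the proof of Theorem~\ref{thm:iota-right}---this is where Lemma~\ref{lem:inv-maps-agree} and the naturality of the closed invariants intervene---and that commuting the $\eta$-isomorphisms past $\Omega$ and past the insertion of $\CFDAa(\Id)$ produces only homotopies, not genuine discrepancies. Once that is in hand, the mapping-cone comparison is formal.
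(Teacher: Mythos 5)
Your proposal is correct and follows the same route as the paper, which simply observes that the statement follows from Theorem~\ref{thm:iota-right}; you have filled in the mapping-cone bookkeeping and the identification of $g$ with the composition in Formula~\eqref{eq:decomp-iota} that the paper leaves implicit. No issues.
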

\begin{proof}
  This follows from Theorem~\ref{thm:iota-right}.
\end{proof}

\section{Computing the mapping class group action}\label{sec:MCG}
We start by recalling a well-known lemma:
\begin{lemma}\label{lem:preserve-HS}
  Let $\phi\co (Y,p)\to (Y,p)$ be an orientation-preserving, based
  diffeomorphism. Then there is a Heegaard splitting
  $Y=\HB_0\cup_\Sigma\HB_1$ with $p\in \Sigma$ and a diffeomorphism
  $\chi$ isotopic to $\phi$ (rel.~$p$) so that $\chi(\HB_i)=\HB_i$.
\end{lemma}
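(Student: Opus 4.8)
The plan is to reduce the statement to the Reidemeister--Singer theorem that any two Heegaard splittings of a closed oriented $3$-manifold have a common stabilization. First I would fix an arbitrary Heegaard splitting $Y=\HB_0\cup_\Sigma\HB_1$ with $p\in\Sigma$ and consider its image $\phi(\HB_0)\cup_{\phi(\Sigma)}\phi(\HB_1)$, which is again an (ordered) Heegaard splitting of $Y$, of the same genus $g$. Applying Reidemeister--Singer to these two splittings gives a common stabilization; comparing genera shows that it is obtained from \emph{each} of them by the same number $N$ of stabilizations. Performing all these stabilizations in small balls disjoint from $p$ produces a Heegaard splitting $Y=\HB_0'\cup_{\Sigma'}\HB_1'$ with $p\in\Sigma'$ such that $\phi$ carries it to an isotopic ordered Heegaard splitting; here I use that the $N$-fold stabilization of an ordered splitting is well-defined up to isotopy, independent of where the stabilizations are performed, so that $\phi(\HB_i')$ is the $N$-fold stabilization of $\phi(\HB_i)$ and hence isotopic to $\HB_i'$.

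Next I would choose an ambient isotopy $\{h_t\}_{t\in[0,1]}$ of $Y$ with $h_0=\Id$ and $h_1(\phi(\HB_i'))=\HB_i'$ for $i=0,1$ (using the ordered form of Reidemeister--Singer so that the two handlebodies are not interchanged), and set $\chi_0=h_1\circ\phi$, so that $\chi_0$ is isotopic to $\phi$ and $\chi_0(\HB_i')=\HB_i'$. It then remains to correct the basepoint: $\chi_0(p)=h_1(p)$ lies on the connected surface $\Sigma'$, so for any chosen path in $\Sigma'$ from $h_1(p)$ to $p$ there is an ambient isotopy $\{g_t\}$, supported in a collar of $\Sigma'$ and preserving both $\HB_i'$, with $g_0=\Id$ and $g_1(h_1(p))=p$. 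Setting $\chi=g_1\circ\chi_0$ yields a diffeomorphism isotopic to $\phi$ with $\chi(\HB_i')=\HB_i'$ and $\chi(p)=p$.

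The one point needing care is arranging the isotopy from $\phi$ to $\chi$ to be \emph{rel} $p$. The concatenation of $\{h_t\circ\phi\}$ and $\{g_t\circ\chi_0\}$ is an isotopy from $\phi$ to $\chi$ along which $p$ traces a loop $\ell$ based at $p$; using the evaluation fibration $\mathrm{Diff}(Y)\to Y$, $f\mapsto f(p)$, whose fiber is $\mathrm{Diff}(Y,p)$, this isotopy can be homotoped rel endpoints to one fixing $p$ as soon as $[\ell]=1\in\pi_1(Y,p)$. The class $[\ell]$ depends on the chosen path in $\Sigma'$ from $h_1(p)$ to $p$ only through its image in $\pi_1(Y,p)$, and since $\Sigma'\hookrightarrow Y$ is $\pi_1$-surjective (a standard property of Heegaard surfaces) that image can be made arbitrary; choosing the path so that $[\ell]=1$ finishes the proof. (Equivalently, one can run the whole Reidemeister--Singer argument in the based category, stabilizing along arcs missing $p$ and keeping all isotopies rel $p$.) The geometric content is entirely Reidemeister--Singer; this basepoint bookkeeping is the only real obstacle, and it is routine.
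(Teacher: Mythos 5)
Your proposal is correct and follows essentially the same route as the paper: stabilize (Reidemeister--Singer) until the chosen splitting and its $\phi$-image become isotopic as ordered splittings, then compose $\phi$ with the inverse of the ambient isotopy. You are in fact more careful than the paper's very terse proof, which does not explicitly address keeping $p$ on the Heegaard surface or arranging the final isotopy to be rel~$p$; your evaluation-fibration argument together with $\pi_1$-surjectivity of the Heegaard surface handles that point cleanly.
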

\begin{proof}
  Start with any Heegaard splitting $Y=\HB_0\cup_\Sigma\HB_1$ of
  $Y$. Then $\phi(\HB_0)\cup_{\phi(\Sigma)}\phi(\HB_1)$ is another
  Heegaard splitting of $Y$. Since any pair of Heegaard splittings
  becomes isotopic after sufficiently many stabilizations, after
  stabilizing enough times we may assume that $(\HB_0,\HB_1)$ is
  isotopic to $(\phi(\HB_0),\phi(\HB_1))$, by some ambient isotopy
  $\psi_t\co Y\to Y$. Consider the map $\psi_1^{-1}\circ\phi$. Since
  $\psi_1^{-1}$ is isotopic to the identity, $\psi_1^{-1}\circ\phi$ is
  isotopic to $\phi$. Clearly $\psi_1^{-1}\circ\phi$ preserves the
  Heegaard splitting $Y=\HB_0\cup_\Sigma\HB_1$.
\end{proof}

With notation as in the introduction, the main goal of this section is to prove:
\begin{theorem}\label{thm:MCG-act-is}
  The action of a mapping class $[\chi]$ on $\HFa(Y)$ is given by the
  composition of the maps in Formula~\eqref{eq:MCG-act-is}.
\end{theorem}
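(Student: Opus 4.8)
The plan is to follow the template of Theorem~\ref{thm:iota-right}. Recall that, by naturality~\cite{OS06:HolDiskFour,JT:Naturality}, the action of $[\chi]$ on $\HFa(Y)$ is induced by the composite of the tautological relabeling isomorphism $\CFa(\HD)\to\CFa(\chi(\HD))$ with the chain homotopy equivalence $\CFa(\chi(\HD))\to\CFa(\HD)$ associated to \emph{some} sequence of Heegaard moves and changes of almost complex structure from $\chi(\HD)$ to $\HD$; by naturality the result is independent of all choices. Using Lemma~\ref{lem:preserve-HS}, fix a Heegaard splitting $Y=\HB_0\cup_\Sigma\HB_1$ and a representative $\chi$ with $\chi(\HB_i)=\HB_i$, and take $\HD=\HD_0\cup_\Sigma\HD_1$ where $\HD_i$ is a bordered Heegaard diagram presenting $\HB_i$ (so, with $\psi$ the gluing map, $\HD_0$ presents $(H_g,\phi_0)$ and $\HD_1$ presents $(H_g,\phi_0\circ\psi)$). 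Since $\chi$ preserves the splitting, $\chi(\HD)=\chi(\HD_0)\cup_\Sigma\chi(\HD_1)$, where $\chi(\HD_i)$ again presents $\HB_i$ but with its boundary parametrization precomposed by the mapping class $\chi|$ of $\Sigma$; applying the pairing theorem to the mapping cylinder of $\chi|$ this gives
\[
\CFAa(\chi(\HD_0))\simeq\CFAa(\HD_0)\DT\CFDAa(\chi|),\qquad
\CFDa(\chi(\HD_1))\simeq\CFDAa(\chi|^{-1})\DT\CFDa(\HD_1).
\]

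First I would analyze the relabeling map. Exactly as in the proof that Diagram~\eqref{eq:easy-com} commutes --- via either proof of the pairing theorem --- the tautological isomorphism $\CFa(\HD)\to\CFa(\chi(\HD))$ agrees, under the pairing theorem, with the tensor product of the relabeling maps on $\HD_0$ and on $\HD_1$. Under the identifications above this becomes the map
\[
\CFAa(\HD_0)\DT\CFDAa(\Id)\DT\CFDa(\HD_1)\longrightarrow
\CFAa(\HD_0)\DT\CFDAa(\chi|)\DT\CFDAa(\chi|^{-1})\DT\CFDa(\HD_1)
\]
of the form $\Id\DT f\DT\Id$, where $f\co\CFDAa(\Id)\to\CFDAa(\chi|)\DT\CFDAa(\chi|^{-1})$ is a homotopy equivalence (one exists because $\chi|\circ\chi|^{-1}$ is isotopic to the identity). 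That $f$ is the \emph{same} homotopy equivalence appearing in Formula~\eqref{eq:MCG-act-is} is forced by Lemma~\ref{lem:MCG-rigid} (cf.\ Corollary~\ref{cor:Phi}): up to homotopy there is a unique graded homotopy equivalence of this form.

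Next I would analyze the naturality equivalence $\CFa(\chi(\HD))\to\CFa(\HD)$. Since $\chi$ extends over each $\HB_i$, the bordered diagrams $\chi(\HD_i)$ and $\HD_i$ represent the same bordered $3$-manifold and hence are joined by a sequence of bordered Heegaard moves; gluing a choice of such sequences for $i=0,1$ yields a sequence of closed Heegaard moves from $\chi(\HD)$ to $\HD$, which by naturality computes the equivalence in question. Processing this sequence one move at a time and applying Lemma~\ref{lem:inv-maps-agree} together with its bimodule analogue --- just as in the proof of Theorem~\ref{thm:iota-right} --- shows that the equivalence agrees, under the pairing theorem, with the tensor product of the bordered naturality equivalences $\CFAa(\chi(\HD_0))\to\CFAa(\HD_0)$ and $\CFDa(\chi(\HD_1))\to\CFDa(\HD_1)$. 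By Lemma~\ref{lem:HB-rigid}, each of these is (up to homotopy) the unique graded homotopy equivalence between the relevant bordered handlebody modules; under the identifications above they are therefore homotopic to $\Theta_0$ and $\Theta_1$. Composing these two analyses identifies the action of $[\chi]$ with the composite of Formula~\eqref{eq:MCG-act-is}.

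I expect the main obstacle to be bookkeeping rather than geometry: one must pin down the orientation and side conventions relating a change of boundary parametrization by $\chi|$ to tensoring with $\CFDAa(\chi|)$ versus $\CFDAa(\chi|^{-1})$, placing these bimodules in the correct slots, and one must check that the two sequences of bordered Heegaard moves genuinely glue to a sequence of closed Heegaard moves in which moves supported in one handlebody leave the other untouched --- which is precisely the content of Lemma~\ref{lem:inv-maps-agree} and its bimodule version. The only nontrivial external input, as in Theorem~\ref{thm:iota-right}, is the naturality theorem, invoked to guarantee that the mapping class group action is computed by \emph{any} sequence of Heegaard moves, in particular by one adapted to the splitting $\HB_0\cup_\Sigma\HB_1$.
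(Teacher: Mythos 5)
The overall shape here is reasonable (a relabeling step plus a naturality step, locality via Lemma~\ref{lem:inv-maps-agree} and the pairing theorem for triangles, rigidity to pin down the pieces), but there is a genuine gap at the central point, and as written your argument proves too much. In your first step you assert that the composite of the tautological relabeling $\CFa(\HD)\to\CFa(\chi(\HD))$ with the identifications $\CFAa(\chi(\HD_0))\simeq\CFAa(\HD_0)\DT\CFDAa(\chi|)$ and $\CFDa(\chi(\HD_1))\simeq\CFDAa(\chi|^{-1})\DT\CFDa(\HD_1)$ ``becomes'' a map of the form $\Id\DT f\DT\Id$ supported on the middle factor. It does not: that composite is manifestly of the form $h_0\DT h_1$ with $h_0\co\CFAa(\HD_0)\to\CFAa(\HD_0)\DT\CFDAa(\chi|)$ and $h_1\co\CFDa(\HD_1)\to\CFDAa(\chi|^{-1})\DT\CFDa(\HD_1)$, i.e., a tensor product over the \emph{two}-piece grouping rather than a map supported on the middle of the \emph{three}-piece grouping. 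Lemma~\ref{lem:MCG-rigid} only gives uniqueness of $f$ among maps of the form $\Id\DT f\DT\Id$; it cannot show your map has that form. Worse, since $\CFAa(\HD_0)\DT\CFDAa(\chi|)$ is again the module of a bordered handlebody, Lemma~\ref{lem:HB-rigid} forces $h_i\simeq\Theta_i^{-1}$, so your two steps compose to $(\Theta_0\circ h_0)\DT(\Theta_1\circ h_1)\simeq\Id\DT\Id$: the argument as written shows that every mapping class acts trivially on $\HFa(Y)$, which is false and would make the theorem contentless. Indeed, $\Id\DT f\DT\Id$ and $\Theta_0^{-1}\DT\Theta_1^{-1}$ are homotopic exactly when the action of $[\chi]$ is trivial.

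The same problem resurfaces in your second step. The sequences of bordered Heegaard moves from $\chi(\HD_i)$ to $\HD_i$ exist as moves of \emph{abstract} bordered diagrams (the two present equivalent bordered handlebodies via $\chi|_{\HB_i}$), but their gluing is only a sequence of abstract Heegaard moves from $\chi(\HD)$ to $\HD$; it is not a path of embedded diagrams for $(Y,p)$, because realizing those bordered moves inside $\HB_i$ requires moving the gluing circle $\bdy\Sigma_0$ and the boundary parametrization within $F$, which cannot be supported in either handlebody alone. Naturality therefore does not identify the resulting map with the canonical equivalence $\CFa(\chi(\HD))\to\CFa(\HD)$; the discrepancy is precisely the mapping class group action you are trying to compute. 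The paper's proof avoids this by first inserting the pair of mapping cylinders $\HD_\chi\HD_{\chi^{-1}}$ along $F$ via a path of Heegaard diagrams --- a map supported only on the middle factor of the three-piece decomposition --- and only afterwards running bordered moves on the \emph{regrouped} pieces $\chi(\HD_0)\cup\HD_\chi$ and $\HD_{\chi^{-1}}\cup\chi(\HD_1)$, which genuinely have the same boundary data as $\HD_0$ and $\HD_1$. A composite of tensor-product maps taken with respect to two different groupings need not be a tensor product with respect to either, and that is exactly where the nontriviality of the action lives.
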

\begin{proof}
  The proof is similar to the proof of Theorem~\ref{thm:iota-right}.
  Choose a Heegaard splitting as in Lemma~\ref{lem:preserve-HS}. Let
  $F$ denote the Heegaard surface and $\psi\co F\to F$ the gluing
  diffeomorphism for the Heegaard splitting. Let
  $\HD_0=(\Sigma_0,\alphas^0,\betas^0,z^0)$ be a bordered Heegaard
  diagram representing the $0$-framed handlebody and
  $\HD_1=(\Sigma_1,\alphas^1,\betas^1,z^1)$ a bordered Heegaard
  diagram representing $(H_g,\phi_0\circ\psi)$, so
  $\HD_0\cup_\bdy\HD_1$ is a Heegaard diagram for $Y$. Here, we view
  $\Sigma_0$ and $\Sigma_1$ as subsets of $Y$; see Figure~\ref{fig:HeegAndBord}.

  \begin{figure}
    \centering
    \includegraphics[scale=.83333]{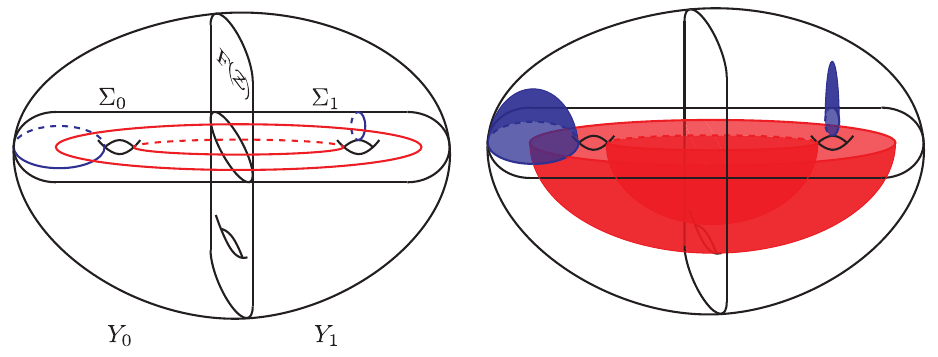}
    \caption{\textbf{Embedded bordered Heegaard surfaces.} Left: a schematic of how the bordered Heegaard surfaces $\Sigma_0$ and $\Sigma_1$ and the boundary $F(\PMC)=\bdy Y_0=-\bdy Y_1$ lie in $Y$. Right: a schematic of the descending disks of index $2$ critical points and ascending disks of index $1$ critical points.}
    \label{fig:HeegAndBord}
  \end{figure}
  
  Applying $\chi$ to $\Sigma_0$ and $\Sigma_1$ gives new Heegaard
  diagrams $(\chi(\Sigma_i),\chi(\alphas^i),\chi(\betas^i),\chi(z^i))$
  for $\HB_i$. (Abstractly, of course, these diagrams are
  diffeomorphic to the original ones, but they are new subsets of the
  manifolds $\HB_i$.) Let $C_\chi$ denote the mapping cylinder of
  $\chi|_F$, and let $\HD_\chi$ be a bordered Heegaard diagram for
  $C_\chi$. Cutting $Y$ along $F$ and gluing in $C_\chi C_{\chi^{-1}}$
  does not change the $3$-manifold. At the level of Heegaard diagrams,
  this corresponds to gluing $\HD_\chi$ to $\chi(\HB_0)$ and
  $\HD_{\chi^{-1}}$ to $\chi(\HB_1)$.
  Further, this cutting and regluing can be realized by a path of
  Heegaard diagrams from the standard Heegaard diagram for the identity map to $\HD_{\chi}\cup \HD_{\chi^{-1}}$.

  Now, $\chi(\HD_0)\cup\HD_\chi$ and $\HD_0$ are bordered Heegaard
  diagrams representing $\HB_0$, and the Heegaard surfaces are
  embedded so that they have the same boundary. Similarly,
  $\HD_{\chi^{-1}}\cup \chi(\HD_1)$ and $\HD_1$ both represent
  $\HB_1$. Choose a path of Heegaard diagrams from
  $\chi(\HD_0)\cup\HD_{\chi}$ to $\HD_0$, and a path from
  $\HD_{\chi^{-1}}\cup \chi(\HD_1)$ to $\HD_1$. By definition, the
  map on $\HFa$ induced by $\chi$ comes from the composition of the
  Heegaard Floer continuation map associated to the path which
  introduces $\HD_{\chi}\cup\HD_{\chi^{-1}}$ and then the Heegaard Floer
  continuation maps associated to the Heegaard moves from
  $\chi(\HD_0)\cup\HD_{\chi}$ to $\HD_0$ and
  $\HD_{\chi^{-1}}\cup \chi(\HD_1)$ to $\HD_1$.

  By the pairing theorem for holomorphic triangle
  maps~\cite[Proposition 5.35]{LOT:DCov2}, these continuation maps
  agree with the tensor products of the bordered continuation maps
  associated to the pieces which are changing. So, a similar argument
  to the proof of commutativity of
  Diagrams~(\ref{eq:com-Phi-1}),~(\ref{eq:com-Phi-2}),
  and~(\ref{eq:com-Psi}) shows that the action of
  $\chi$ on $\HFa$ is given by the composition
  \begin{align*}
    \CFAa(\HD_0)\DT\CFDa(\HD_1)&=\CFAa(\HD_0)\DT[\Id]\DT\CFDa(\HD_1)\\
    &\to\CFAa(\HD_0)\DT\CFDAa(\bId_{\PMC})\DT\CFDa(\HD_1)\\
    &\to \CFAa(\HD_0)\DT\CFDAa(\chi)\DT\CFDAa(\chi^{-1})\DT\CFDa(\HD_1)\\
    &\stackrel{\Theta_0\DT\Theta_1}{\longrightarrow} \CFAa(\HD_0)\DT\CFDa(\HD_1),
  \end{align*}
  where the first map comes from the homotopy equivalence
  $[\Id]\simeq \CFDAa(\bId_{\PMC})$, the second map comes from some homotopy
  equivalence $\CFDAa(\bId_{\PMC})\to \CFDAa(\chi|_F)\DT\CFDAa(\chi^{-1}|_F)$ and the
  third map comes from some homotopy equivalences
  $\CFAa(\HD_0)\DT\CFDAa(\chi|_F)\to \CFAa(\HD_0)$ and
  $\CFDAa(\chi^{-1}|_F)\DT\CFDa(\HD_1)\to \CFDa(\HD_1)$. By
  Lemmas~\ref{lem:HB-rigid} and~\ref{lem:MCG-rigid}, up to homotopy there is a
  unique homotopy equivalence in each case.
\end{proof}

As noted in the introduction, each of the maps in Formula~\eqref{eq:MCG-act-is} is the unique homotopy class of homotopy equivalences between the given source and target. So, after computing the modules and bimodules by factoring into mapping classes~\cite{LOT4}, computing the homotopy equivalences required to describe the mapping class group action is straightforward (and, in particular, algorithmic).

\section{The surgery exact triangle}\label{sec:triangle}

The goal of this section is to prove:
\begin{theorem}\label{thm:I-tri}
  Let $K$ be a framed knot in a $3$-manifold $Y$. Then there is a surgery exact triangle
  \[
    \cdots\to\HFIa(Y)\to\HFIa(Y_{-1}(K))\to \HFIa(Y_0(K))\to\HFIa(Y)\to\cdots.
  \]
\end{theorem}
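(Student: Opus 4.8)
The plan is to feed the bordered proof of the surgery exact triangle for $\HFa$ (Section~\ref{sec:background-triangle}) through the involutive formalism of Section~\ref{sec:ibF}. Realize $K$ as sitting inside its exterior $Y'=Y\setminus\nbd(K)$ and parametrize $\bdy Y'$, compatibly with the conventions of Section~\ref{sec:background-triangle}, so that filling by the $\infty$-, $(-1)$-, and $0$-framed solid tori $S_\infty,S_{-1},S_0$ recovers $Y$, $Y_{-1}(K)$, and $Y_0(K)$. The pairing theorem for involutive bordered Floer homology then gives
\[
\CFIa(Y)\simeq\CFAIa(Y')\DT\CFDIa(S_\infty),\qquad \CFIa(Y_{-1}(K))\simeq\CFAIa(Y')\DT\CFDIa(S_{-1}),\qquad \CFIa(Y_0(K))\simeq\CFAIa(Y')\DT\CFDIa(S_0),
\]
where $\CFDIa(S_s)=(\CFDa(\HD_s),\Psi_{D,s})$ and the $\CFDa(\HD_s)$ are the rank-$1$ and rank-$2$ type $D$ structures written down in Section~\ref{sec:background-triangle}. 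So it suffices to assemble the three right-hand sides into an exact triangle.

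The first step is to upgrade the short exact sequence $0\to\CFDa(\HD_\infty)\xrightarrow{\phi}\CFDa(\HD_{-1})\xrightarrow{\psi}\CFDa(\HD_0)\to 0$ of Section~\ref{sec:background-triangle} to a sequence of morphisms of \emph{involutive} type $D$ structures, i.e.\ to show that, up to homotopy of type $D$ morphisms,
\[
\phi\circ\Psi_{D,\infty}\ \simeq\ \Psi_{D,-1}\circ(\Id_{\CFDAa(\AZ)}\DT\phi),\qquad \psi\circ\Psi_{D,-1}\ \simeq\ \Psi_{D,0}\circ(\Id_{\CFDAa(\AZ)}\DT\psi).
\]
Here the rigidity results do the work. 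Each $S_s$ is a genus-$1$ handlebody; the Auroux--Zarev gluing identities give $\CFDAa(\AZ)\DT\CFDa(\HD_s)\simeq\CFDa(\HD_s)$ as graded type $D$ structures, so by Lemma~\ref{lem:HB-rigid} the map $\Psi_{D,s}$ is \emph{the} graded homotopy auto-equivalence, in particular determined up to homotopy. Both sides of the first displayed homotopy are nonzero graded cycles lying in the same graded summand of $\Mor(\CFDAa(\AZ)\DT\CFDa(\HD_\infty),\CFDa(\HD_{-1}))$: indeed $\Psi_{D,\infty}$, $\Psi_{D,-1}$, and $\Id\DT\phi$ (using that $\CFDAa(\AZ)$ is invertible up to homotopy, Corollary~\ref{cor:Phi}) induce isomorphisms on morphism homology, and $[\phi]\neq0$. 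The homology of that complex in the relevant grading is one-dimensional, since $\Mor(\CFDa(\HD_\infty),\CFDa(\HD_{-1}))$ computes $\HFa$ of a union of two solid tori whose meridians meet once, a manifold with $|H_1|=1$. Two nonzero classes in a one-dimensional group agree, so the composites are homotopic; the argument for $\psi$ is identical. Fix homotopies $H_\phi,H_\psi$ witnessing these identities.

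With this in hand, form the type $D$ mapping cone $\Cone(\phi)$ and equip it with the structure map $\widehat\Psi$ induced on cones by the homotopy-commuting square $(\Psi_{D,\infty},\Psi_{D,-1},H_\phi)$; since $\widehat\Psi$ restricts to $\Psi_{D,\infty}$ and $\Psi_{D,-1}$ on the two pieces of the cone filtration, it is a graded homotopy equivalence $\CFDAa(\AZ)\DT\Cone(\phi)\to\Cone(\phi)$. The short exact sequence furnishes a homotopy equivalence $\Cone(\phi)\xrightarrow{\simeq}\CFDa(\HD_0)$ of type $D$ structures, and $\CFDa(\HD_0)$ is the invariant of a genus-$1$ handlebody, so Lemma~\ref{lem:HB-rigid} forces $\widehat\Psi$ to coincide, under this equivalence and up to homotopy, with $\Psi_{D,0}$. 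Thus $(\Cone(\phi),\widehat\Psi)$ and $\CFDIa(S_0)$ are equivalent involutive type $D$ structures. Box-tensoring with the involutive $\Ainf$-module $\CFAIa(Y')$ sends morphisms of involutive type $D$ structures to chain maps, carries equivalent involutive type $D$ structures to quasi-isomorphic $\Field[Q]/(Q^2)$-complexes (the tensor-product lemma of Section~\ref{sec:ibF}), and commutes with mapping cones. Applying it to $\phi$ and to the equivalence just obtained yields a chain map $\tilde\Phi\co\CFIa(Y)\to\CFIa(Y_{-1}(K))$ with
\[
\Cone\bigl(\tilde\Phi\bigr)\ \simeq\ \CFAIa(Y')\DT(\Cone(\phi),\widehat\Psi)\ \simeq\ \CFAIa(Y')\DT\CFDIa(S_0)\ \simeq\ \CFIa(Y_0(K)).
\]
The long exact sequence of this mapping cone is exactly the asserted triangle $\cdots\to\HFIa(Y)\to\HFIa(Y_{-1}(K))\to\HFIa(Y_0(K))\to\HFIa(Y)\to\cdots$.

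The only non-formal point is the compatibility established in the second step: that the surgery short exact sequence respects the conjugation involutions. This is manageable because every type $D$ structure that appears, including the iterated cone $\Cone(\phi)\simeq\CFDa(\HD_0)$, is the invariant of a genus-$1$ handlebody, so the rigidity of Section~\ref{sec:rigid} applies throughout; the one spot that requires care is the grading bookkeeping needed to verify that $\phi\circ\Psi_{D,\infty}$ and $\Psi_{D,-1}\circ(\Id\DT\phi)$ lie in the \emph{same} grading, so that one-dimensionality of the relevant morphism homology pins them down.
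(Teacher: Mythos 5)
Your proof is correct, but it reaches the triangle by a genuinely different route than the paper. The paper's engine is an explicit computation (Lemma~\ref{lem:surj-loc-com}, Figure~\ref{fig:tri-proof}): it writes down concrete homotopies $G$ and $H$ for the two squares \emph{together with} the extra compatibility $\psi\circ G=H\circ(\Id\DT\phi)$, and this compatibility is what makes the matrices $\bigl[\begin{smallmatrix}i&0\\G&i\end{smallmatrix}\bigr]$, $\bigl[\begin{smallmatrix}p&0\\H&p\end{smallmatrix}\bigr]$ assemble into a \emph{short exact sequence} of the mapping cones of $\Id+\iota$, whose long exact sequence is the triangle. You instead obtain homotopy-commutativity of the squares abstractly, by observing that $\Mor(\CFDa(\HD_\infty),\CFDa(\HD_{-1}))$ computes $\HFa$ of a union of solid tori with meridians meeting once (so its homology is $\Field$) and that both composites are non-nullhomotopic cycles there; and you then sidestep the compatibility of the two homotopies entirely by packaging $\Cone(\phi)$ with its induced involutive structure $\widehat\Psi$, identifying it with $\CFDIa(S_0)$ via Lemma~\ref{lem:HB-rigid}, and reading the triangle off a single mapping cone after applying the involutive tensor product (which does commute with cones over $\Field$, via the usual interchange for lower-triangular endomorphisms of a cone). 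What the paper's approach buys is complete explicitness—the homotopies are handed to you, which matters for the computer implementation in Section~\ref{sec:examples}, and the argument immediately shows $\iota_*$ commutes with the surgery maps; what yours buys is brevity and the prospect of generalizing to other exact triangles where the relevant morphism homology is small. Two small points you assert without proof: that $[\phi]\neq 0$ in the one-dimensional morphism homology (true—if $\phi$ were nullhomotopic, pairing $\Cone(\phi)\simeq\CFDa(\HD_0)$ against $\CFAa$ of the $\infty$-framed solid torus would give $2+1=1$, a contradiction—but it needs saying), and that the homotopy $H_\phi$ can be chosen homogeneous so that $\widehat\Psi$ is a \emph{graded} homotopy equivalence, which is what Lemma~\ref{lem:HB-rigid} actually requires; both are easily supplied.
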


Before turning to the proof, to fix notation we recall the modules and maps used in the bordered proof of the surgery exact triangle for $\HFa$~\cite[Section 11.2]{LOT1}. (The reader is referred to the original paper for a more leisurely account.)

Let $\HD_0$, $\HD_1$ and $\HD_\infty$ be the standard, genus $1$ Heegaard diagrams for the $0$-framed, $1$-framed, and $\infty$-framed solid tori, respectively. It is easy to compute that
\begin{align*}
  \CFDa(\HD_\infty)&=\langle r\mid \delta^1(r)=\rho_{2,4}r\rangle \\
  \CFDa(\HD_{-1})&=\langle a,b\mid \delta^1(a)=(\rho_{1,2}+\rho_{3,4})b,\ \delta^1(b)=0\rangle\\
  \CFDa(\HD_0)&=\langle n\mid \delta^1(n)=\rho_{1,3}n\rangle.
\end{align*}
Further, there is a short exact sequence
\[
  0\to \CFDa(\HD_\infty)\stackrel{\phi}{\longrightarrow}\CFDa(\HD_{-1})\stackrel{\psi}{\longrightarrow}\CFDa(\HD_0)\to 0
\]
where $\phi$ and $\psi$ are given by
\begin{align*}
  \phi(r)&=b+\rho_{2,3}a & \psi(a)&=n & \psi(b)&=\rho_{2,3} n.
\end{align*}
Given any bordered $3$-manifold $Y$ with boundary $T^2$, tensoring
this short exact sequence with $\CFAa(Y)$ gives a long exact sequence
in homology~\cite[Proposition 2.36]{LOT1}---the desired surgery exact
sequence. (This exact sequence agrees with Ozsv\'ath-Szab\'o's
original~\cite{OS04:HolDiskProperties}, as proved in~\cite[Corollary
5.41]{LOT:DCov2}.)


For notational convenience, in this section let $\AZ=\AZ(-\PMC_1)$.
%
The main work in extending these bordered computations to prove Theorem~\ref{thm:I-tri} is the
following lemma:
\begin{lemma}\label{lem:surj-loc-com}
  There are homotopies $G\co \CFDAa(\AZ)\DT\CFDa(\HD_\infty)\to \CFDa(\HD_{-1})$ and $H\co \CFDAa(\AZ)\DT\CFDa(\HD_{-1})\to \CFDa(\HD_0)$ making each square of the following diagram homotopy commute:
  \[
    \xymatrix{
      \CFDAa(\AZ)\DT\CFDa(\HD_\infty)\ar[r]^{\Id\DT\phi}\ar[d]_{\Psi}\ar@{-->}[dr]^G & \CFDAa(\AZ)\DT\CFDa(\HD_{-1})\ar[r]^{\Id\DT\psi}\ar[d]_{\Psi}\ar@{-->}[dr]^H & \CFDAa(\AZ)\DT\CFDa(\HD_0)\ar[d]_{\Psi}\\
      \CFDa(\HD_\infty)\ar[r]^{\phi} & \CFDa(\HD_{-1})\ar[r]^{\psi} & \CFDa(\HD_0).
    }
  \]
  Further, $\psi\circ G=H\circ(\Id\DT\phi)$.
\end{lemma}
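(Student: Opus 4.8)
The plan is to prove the lemma by combining the rigidity results of Section~\ref{sec:rigid} with a few small computations over the genus-$1$ algebra $\Alg(\PMC_1,0)$, which has only eight basic elements. Write $\Psi_\infty,\Psi_{-1},\Psi_0$ for the three copies of $\Psi$ in the diagram. Since each $\HD_i$ represents a solid torus, hence a handlebody, Lemma~\ref{lem:HB-rigid} says that $\Psi_i\co\CFDAa(\AZ)\DT\CFDa(\HD_i)\to\CFDa(\HD_i)$ is, up to homotopy, the unique graded homotopy equivalence; such a map exists because by the pairing theorem $\CFDAa(\AZ)\DT\CFDa(\HD_i)\simeq\CFDa(\AZ\cup\conj{\HD_i})\simeq\CFDa(\HD_i)$, the last equivalence because $\AZ\cup\conj{\HD}$ and $\HD$ represent the same bordered solid torus. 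Thus the columns of the diagram are pinned down.

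Next I would show the two squares commute up to homotopy and extract $G$ and $H$. Since $\CFDAa(\AZ)\DT(-)$ is an equivalence of homotopy categories, the pairing theorem gives $\Mor\bigl(\CFDAa(\AZ)\DT\CFDa(\HD_\infty),\CFDa(\HD_{-1})\bigr)\simeq\Mor\bigl(\CFDa(\HD_\infty),\CFDa(\HD_{-1})\bigr)$, and a direct computation with these tiny modules shows the homology of the latter is one-dimensional, generated by the class of $\phi$; likewise $H_*\Mor\bigl(\CFDa(\HD_{-1}),\CFDa(\HD_0)\bigr)$ is one-dimensional, generated by the class of $\psi$. Now $\phi\circ\Psi_\infty$ and $\Psi_{-1}\circ(\Id\DT\phi)$ are each a grading-preserving homomorphism obtained from a non-nullhomotopic map by composing with homotopy equivalences, hence are non-nullhomotopic, so they represent the unique nonzero class in that one-dimensional homology and are therefore chain homotopic; let $G$ be a homotopy. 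The same argument produces $H$.

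The main obstacle is the last claim, the \emph{strict} identity $\psi\circ G=H\circ(\Id\DT\phi)$, which is what is needed so that $(\phi,\psi,G,H)$ assembles into the map of iterated mapping cones used to deduce Theorem~\ref{thm:I-tri}. Using $\psi\circ\phi=0$ together with the two homotopy relations, one checks that $\psi\circ G$ and $H\circ(\Id\DT\phi)$ are both nullhomotopies of $\psi\circ\Psi_{-1}\circ(\Id\DT\phi)$, so their sum $D$ is a cycle in $\Mor\bigl(\CFDAa(\AZ)\DT\CFDa(\HD_\infty),\CFDa(\HD_0)\bigr)$. By the pairing theorem this complex is homotopy equivalent to $\Mor\bigl(\CFDa(\HD_\infty),\CFDa(\HD_0)\bigr)$, whose homology is one-dimensional and concentrated in a single grading; tracking the grading shift of $G$ in the spirit of Example~\ref{eg:CFD-gr} shows that $D$ lies in a strictly higher grading, where the homology is zero, so $D=d(E)$ for some $E$. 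Since $\psi$ fits into a short exact sequence, the induced map of morphism complexes is surjective on chains, so $E$ lifts through $\psi$ to some $E'$; then $d(E')$ is a cycle with $\psi\circ d(E')=D$, and replacing $G$ by $G+d(E')$ leaves the left square commuting up to homotopy while forcing $\psi\circ G=H\circ(\Id\DT\phi)$ on the nose. The only genuinely fiddly point is the grading bookkeeping in this last step; everything else is a bounded, mechanical verification over $\Field$.
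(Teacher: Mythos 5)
Your argument is correct, but it is a genuinely different proof from the paper's. The paper proceeds by brute force: it writes out $\CFDAa(\AZ)\DT\CFDa(\HD_\bullet)$ explicitly for all three framings, exhibits the maps $\Psi$, $G$, $H$ generator by generator (Figure~\ref{fig:tri-proof}), and verifies all the relations by inspecting length-two paths. You instead combine the rigidity results of Section~\ref{sec:rigid} with the observation that $H_*\Mor(\CFDa(\HD_\infty),\CFDa(\HD_{-1}))$ and $H_*\Mor(\CFDa(\HD_{-1}),\CFDa(\HD_0))$ are each one-dimensional, generated by $\phi$ and $\psi$ respectively (a three-line computation in each case), so that homotopy commutativity of the squares is automatic; you then obtain the strict identity $\psi\circ G=H\circ(\Id\DT\phi)$ by a correction $G\mapsto G+d(E')$, using that $D=\psi\circ G+H\circ(\Id\DT\phi)$ is a cycle of degree $\lambda$ times the degree of the generator $r\mapsto\rho_{2,3}n$ of the one-dimensional $H_*\Mor(\CFDa(\HD_\infty),\CFDa(\HD_0))$, hence a boundary, and that post-composition with the surjection $\psi$ is surjective on morphism chains (because $\psi(a)=I(1)\otimes n$ has an idempotent coefficient, every morphism to $\CFDa(\HD_0)$ lifts on the nose). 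I checked the grading claim you assert but do not carry out---$\deg(D)=\lambda\cdot\deg(\psi)\deg(\phi)\deg(\Psi_\infty)$ while the homology of the target complex sits in degree $\deg(\psi)\deg(\phi)\deg(\Psi_\infty)$, and $\lambda$ acts freely---so the argument closes; you should spell this out, as it is the load-bearing step. The trade-off between the two approaches: the paper's computation produces explicit formulas for $G$ and $H$ (convenient for implementation and for the reader's verification), whereas your argument is non-constructive but more conceptual, makes clear \emph{why} the homotopies must exist, and would adapt to other triples of framings or to situations where writing out the tensor products is impractical.
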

\begin{proof}
  This is a direct computation. 

  Recall from Section~\ref{sec:AZ} that $\CFDAa(\AZ)$ is the type \DA\ bimodule with generators 
  \[
  \begin{matrix}
  \iota_1\otimes\iota_0, &\iota_1\otimes\rho_{1,2},&\iota_1\otimes\rho_{1,3},&\iota_1\otimes\rho_{1,4},\\\iota_1\otimes\rho_{3,4},&
  \iota_0\otimes\iota_1,&\iota_0\otimes\rho_{2,3},&\iota_0\otimes\rho_{2,4},
  \end{matrix}
  \]
  
  The operation $\delta^1_2\co \CFDAa(\AZ)\otimes\Alg(T^2)\to \Alg(T^2)\otimes\CFDAa(\AZ)$ 
  is the obvious right action of $\Alg(T^2)$, and $\delta^1_1\co \CFDAa(\AZ)\to \Alg(T^2)\otimes\CFDAa(\AZ)$ is induced by
  \begin{align*}
    \delta^1_1(\iota_1\otimes\iota_0)&=\rho_{2,3}\otimes(\iota_0\otimes\rho_{2,3}),\\
    \delta^1_1(\iota_0\otimes\iota_1)&=\rho_{1,2}\otimes (\iota_1\otimes\rho_{1,2})+\rho_{3,4}\otimes(\iota_1\otimes\rho_{3,4})+\rho_{1,4}\otimes(\iota_1\otimes\rho_{1,4}),
  \end{align*}
  and the Leibniz rule with $\delta^1_2$. All higher $\delta^1_k$, $k\geq 3$, vanish.

  Thus, the type $D$ structure $\CFDAa(\AZ)\DT\CFDa(\HD_\infty)$ has
  generators 
  \[
  \begin{matrix}
    \iota_1\otimes \rho_{1,2}\otimes r, & \iota_1\otimes\rho_{1,4}\otimes r, & \iota_1\otimes\rho_{3,4}\otimes r, &
    \iota_0\otimes\iota_1\otimes r, & \iota_0\otimes\rho_{2,4}\otimes r
  \end{matrix}
  \]
  (as a type $D$ structure) with differential given by
  \begin{align*}
    \delta^1(\iota_1\otimes \rho_{1,2}\otimes r) &=\greenit{\iota_1\otimes(\iota_1\otimes\rho_{1,4}\otimes r)}\\
    \delta^1(\iota_1\otimes\rho_{1,4}\otimes r) &=0\\
    \delta^1(\iota_1\otimes\rho_{3,4}\otimes r) &=\orangeit{\rho_{2,3}\otimes (\iota_0\otimes\rho_{2,4}\otimes r)} \\
    \delta^1(\iota_0\otimes\iota_1\otimes r) &=\greenit{\iota_0\otimes (\iota_0\otimes \rho_{2,4}\otimes r)}+\orangeit{\rho_{1,2}\otimes(\iota_1\otimes\rho_{1,2}\otimes r)}\\&\qquad\qquad
+\orangeit{\rho_{3,4}\otimes(\iota_1\otimes\rho_{3,4}\otimes r)}
    +\orangeit{\rho_{1,4}\otimes(\iota_1\otimes\rho_{1,4}\otimes r)}\\
    \delta^1(\iota_0\otimes\rho_{2,4}\otimes r)&=\orangeit{\rho_{1,2}\otimes(\iota_1\otimes\rho_{1,4}\otimes r)}.
  \end{align*}
  Here, \greenit{some terms} come from the operation $\delta^1$ on
  $\CFDa(\HD_\infty)$ (together with the operation $\delta^1_2$ on
  $\CFDAa(\AZ)$) while \orangeit{other terms} come from the operation
  $\delta^1_1$ on $\CFDAa(\AZ)$. The quasi-isomorphism $\Psi$ is given by
  \[
  \Psi(\iota_1\otimes\rho_{3,4}\otimes r) =\iota_1\otimes r\qquad
  \Psi(\iota_0\otimes\rho_{2,4}\otimes r) =\rho_{3,4}\otimes r,
  \]
  \[
  \Psi(\iota_1\otimes \rho_{1,2}\otimes r)=
  \Psi(\iota_1\otimes\rho_{1,4}\otimes r)=
  \Psi(\iota_0\otimes\iota_1\otimes r)=0.
  \]
  
  These formulas are perhaps easier to absorb, and check, graphically:
  \[
  \begin{tikzpicture}
    \node at (0,-5) (CFDA) {$\CFDAa(\AZ)\DT\CFDa(\HD_\infty)$};
    \node at (-1,0) (iota1) {$\iota_0|\iota_1|r$};
    \node at (-4,-4) (rho12) {$\iota_1|\rho_{1,2}|r$};
    \node at (-2,-3) (rho14) {$\iota_1|\rho_{1,4}|r$};
    \node at (0,-2) (rho24) {$\iota_0|\rho_{2,4}|r$};
    \node at (2,-1) (rho34) {$\iota_1|\rho_{3,4}|r$};
    \node at (8,-2) (r) {$r$};
    \node at (8.75,-2) (rphant) {};
    \node at (8,-5) (CFD) {$\CFDa(\HD_\infty)$};
    \draw[->, bend left=30] (r) to node[right]{\lab{\rho_{2,4}}} (rphant) to (r);
    \draw[->] (rho12) to (rho14);
    \draw[->] (rho34) to node[above,sloped]{\lab{\rho_{2,3}}} (rho24);
    \draw[->] (iota1) to (rho24);
    \draw[->] (iota1) to node[above,sloped]{\lab{\rho_{1,2}}} (rho12);
    \draw[->] (iota1) to node[above,sloped]{\lab{\rho_{3,4}}} (rho34);
    \draw[->] (iota1) to node[above, sloped]{\lab{\rho_{1,4}}} (rho14);
    \draw[->] (rho24) to node[above,sloped]{\lab{\rho_{1,2}}} (rho14);
    \draw[dmar] (rho34) to (r);
    \draw[dmar] (rho24) to node[above,sloped]{\lab{\rho_{3,4}}} (r);
  \end{tikzpicture}
  \]
  Here, we have replaced tensor signs with vertical bars. Unlabeled
  arrows are implicitly labeled by idempotents. Dashed arrows
  represent the map $\Psi$, while solid arrows represent
  $\delta^1$. Labels are always above the corresponding arrows. The
  check that $\Psi$ is a homomorphism reduces to examining all
  length-two paths from a vertex on the left to $r$. The map is
  clearly a quasi-isomorphism.

  After this warm-up, the complexes
  $\CFDAa(\AZ)\DT\CFDa(\HD_{-1})$ and $\CFDAa(\AZ)\DT\CFDa(\HD_0)$;
  the maps $\Psi$ on them; the morphisms $\phi$ and $\psi$ and induced
  maps $\Id\DT\phi$ and $\Id\DT\psi$; and the homotopies are shown in Figure~\ref{fig:tri-proof}.
  \begin{figure}
  \[
  \begin{tikzpicture}
    \node at (-1,0) (iota1r) {$\iota_1|r$};
    \node at (-4,-4) (rho12r) {$\rho_{1,2}|r$};
    \node at (-2,-3) (rho14r) {$\rho_{1,4}|r$};
    \node at (0,-2) (rho24r) {$\rho_{2,4}|r$};
    \node at (2,-1) (rho34r) {$\rho_{3,4}|r$};
    \node at (6,-2) (r) {$r$};
    \node at (6.75,-2) (rphant) {};
    \draw[->, bend left=60] (r) to node[right]{\lab{\rho_{2,4}}} (rphant) to (r);
    \draw[->] (rho12r) to (rho14r);
    \draw[->] (rho34r) to node[above,sloped]{\lab{\rho_{2,3}}} (rho24r);
    \draw[->] (iota1r) to (rho24r);
    \draw[->] (iota1r) to node[above,sloped]{\lab{\rho_{1,2}}} (rho12r);
    \draw[->] (iota1r) to node[above,sloped]{\lab{\rho_{3,4}}} (rho34r);
    \draw[->] (iota1r) to node[above, sloped]{\lab{\rho_{1,4}}} (rho14r);
    \draw[->] (rho24r) to node[above,sloped]{\lab{\rho_{1,2}}} (rho14r);
    \draw[dmar] (rho34r) to (r);
    \draw[dmar] (rho24r) to node[above,sloped]{\lab{\rho_{3,4}}} (r);
    \node at (-4,-9) (iota0a) {$\iota_0|a$};
    \node at (-2,-10) (rho23a) {$\rho_{2,3}|a$};
    \node at (0,-11) (rho13a) {$\rho_{1,3}|a$};
    \node at (0,-8) (iota1b) {$\iota_1|b$};
    \node at (2,-9) (rho34b) {$\rho_{3,4}|b$};
    \node at (4,-10) (rho24b) {$\rho_{2,4}|b$};
    \node at (3,-11) (rho14b) {$\rho_{1,4}|b$};
    \node at (-2,-7) (rho12b) {$\rho_{1,2}|b$};
    \draw[->] (iota0a) to (rho34b);
    \draw[->] (iota0a) to (rho12b);
    \draw[->] (rho23a) to (rho24b);
    \draw[->] (rho13a) to (rho14b);
    \draw[->] (iota1b) to node[above,sloped]{\lab{\rho_{1,2}}} (rho12b);
    \draw[->] (iota1b) to node[above,sloped]{\lab{\rho_{3,4}}} (rho34b);
    \draw[->] (rho34b) to node[above,sloped]{\lab{\rho_{2,3}}} (rho24b);
    \draw[->] (rho24b) to node[above,sloped]{\lab{\rho_{1,2}}} (rho14b);
    \draw[->] (iota0a) to node[above,sloped]{\lab{\rho_{2,3}}} (rho23a);
    \draw[->] (rho23a) to node[above,sloped]{\lab{\rho_{1,2}}} (rho13a);
    \draw[->, bend right=15] (iota1b) to node[above,sloped]{\lab{\rho_{1,4}}} (rho14b);
    \node at (8,-8) (a) {$a$};
    \node at (8,-10) (b) {$b$};
    \draw[->] (a) to node[above,sloped]{\lab{\rho_{1,2}+\rho_{3,4}}} (b);
    \draw[dmar] (iota1b) to (a);
    \draw[dmar] (rho34b) to (b);
    \draw[dmar] (rho12b) to (b);
    \node at (-2,-13) (iota0n) {$\iota_0|n$};
    \node at (1,-13) (rho23n) {$\rho_{2,3}|n$};
    \node at (-1,-15) (rho13n) {$\rho_{1,3}|n$};
    \node at (6,-14) (n) {$n$};
    \node at (6.75,-14) (nphant) {};
    \draw[->] (iota0n) to (rho13n);
    \draw[->] (iota0n) to node[above,sloped]{\lab{\rho_{2,3}}} (rho23n);
    \draw[->] (rho23n) to node[above,sloped]{\lab{\rho_{1,2}}} (rho13n);
    \draw[->, bend left=60] (n) to node[right]{\lab{\rho_{1,3}}} (nphant) to (n);
    \draw[dmar] (rho23n) to (n);
    \draw[dmar] (rho13n) to node[above,sloped]{\lab{\rho_{2,3}}} (n);
    %
    \draw[amar] (r) to (b);
    \draw[amar] (r) to node[above,sloped]{\lab{\rho_{2,3}}} (a);
    \draw[amar, bend left=75] (a) to (n);
    \draw[amar] (b) to node[above,sloped]{\lab{\rho_{2,3}}} (n);
    \draw[amar] (iota1r) to (iota1b);
    \draw[amar, bend left=10] (iota1r) to (rho23a);
    \draw[amar] (rho34r) to (rho34b);
    \draw[amar] (rho24r) to (rho24b);
    \draw[amar] (rho14r) to (rho14b);
    \draw[amar] (rho12r) to (rho12b);
    \draw[amar] (rho12r) to (rho13a);
    \draw[amar] (iota0a) to (iota0n);
    \draw[amar] (rho23a) to (rho23n);
    \draw[amar] (rho13a) to (rho13n);
    \draw[amar] (iota1b) to (rho23n);
    \draw[amar] (rho12b) to (rho13n);
    \draw[aar] (rho24r) to (a);
    \draw[aar] (rho14r) to (b);
    \draw[aar] (rho12r) to node[above,sloped]{\lab{\rho_{2,4}}} (b);
    \draw[aar] (rho24b) to (n);
    \draw[aar] (rho14b) to node[above,sloped]{\lab{\rho_{2,3}}} (n);
  \end{tikzpicture}
  \]
  \caption{\textbf{Proof of Lemma~\ref{lem:surj-loc-com}.} The maps $\Psi$ are dashed, $\phi$
    and $\psi$ are dotted, and the homotopies are thick. We have
    dropped the first idempotent in the label for each generator
    (since it is determined by the other data), so for instance the
    generator $\iota_1\otimes \rho_{3,4}\otimes r$ is denoted
    $\rho_{3,4}|r$. Arrow labels, which indicate type $D$ outputs, are always above the center of the corresponding arrow (except for the self-arrows of $n$ and $r$).}
    \label{fig:tri-proof}
  \end{figure}

  Again, checking that this diagram is correct reduces to looking at
  length-two paths. Have fun!
\end{proof}

\begin{proof}[Proof of Theorem~\ref{thm:I-tri}]
  The framing of $K$ makes $X(K)\coloneqq Y\setminus\nbd(K)$ into a
  bordered $3$-manifold. We claim that the squares in the following diagram commute
  up to the dashed homotopies shown:
  \[
    \xymatrix{
      \CFAa(X(K))\DT\CFDa(\HD_\infty)\ar[r]^{\Id\DT\phi}\ar[d]_-{\Omega} & \CFAa(X(K))\DT\CFDa(\HD_{-1})\ar[r]^{\Id\DT\psi}\ar[d]_-{\Omega} & \CFAa(X(K))\DT\CFDa(\HD_0)\ar[d]_-{\Omega}\\
      {\begin{matrix}\CFAa(X(K))\DT\CFDAa(\bAZ)\\\DT\CFDAa(\AZ)\DT\CFDa(\HD_\infty)\end{matrix}}\ar[r]^-{\Id^3\DT\phi}\ar[d]_-{\Psi_0\DT\Id^2} & {\begin{matrix}\CFAa(X(K))\DT\CFDAa(\bAZ)\\\DT\CFDAa(\AZ)\DT\CFDa(\HD_{-1})\end{matrix}}\ar[r]^-{\Id^3\DT\psi}\ar[d]_-{\Psi_0\DT\Id^2} & {\begin{matrix}\CFAa(X(K))\DT\CFDAa(\bAZ)\\\DT\CFDAa(\AZ)\DT\CFDa(\HD_0)\ar[d]_-{\Psi_0\DT\Id^2}\end{matrix}}\\
      {\begin{matrix}\CFAa(X(K))\DT\CFDAa(\AZ)\\\DT\CFDa(\HD_\infty)\end{matrix}}\ar[r]^-{\Id^2\DT\phi}\ar[d]_-{\Id\DT\Psi_1}\ar@{-->}[dr]^-{\Id\DT G} & {\begin{matrix}\CFAa(X(K))\DT\CFDAa(\AZ)\\\DT\CFDa(\HD_{-1})\end{matrix}}\ar[r]^-{\Id^2\DT\psi}\ar[d]_-{\Id\DT\Psi_1} \ar@{-->}[dr]^-{\Id\DT H}& {\begin{matrix}\CFAa(X(K))\DT\CFDAa(\AZ)\\\DT\CFDa(\HD_0)\ar[d]_-{\Id\DT\Psi_1}\end{matrix}}\\
      \CFAa(X(K))\DT\CFDa(\HD_\infty)\ar[r]^{\Id\DT\phi} & \CFAa(X(K))\DT\CFDa(\HD_{-1})\ar[r]^{\Id\DT\psi} & \CFAa(X(K))\DT\CFDa(\HD_0).
    }
  \]
  
  Indeed, the fact that the top two rows commute on the nose follows from basic properties of
  the box tensor product~\cite[Lemma 2.3.3]{LOT2}. For the third row, commutativity up to the homotopies follows from these
  properties and Lemma~\ref{lem:surj-loc-com}. Further, by Lemma~\ref{lem:surj-loc-com}, the homotopies satisfy
  \[
    (\Id\DT H)\circ (\Id^2\DT\phi)=(\Id\DT\psi)\circ (\Id\DT G).
  \]
  
  Since by Theorem~\ref{thm:iota-right} the
  composition of the three vertical arrows in any column is the map
  $\iota$, it follows that there is a homotopy commutative diagram
  \begin{equation}\label{eq:CF-tri-diag}
    \xymatrix{
      0 \ar[r] & \CFa(Y)\ar[r]^-i\ar[d]_-{\Id+\iota}\ar@{-->}[dr]^{G'} & \CFa(Y_{-1}(K))\ar[r]^-p\ar[d]_-{\Id+\iota}\ar@{-->}[dr]^{H'} & \CFa(Y_0(K))\ar[r]\ar[d]_-{\Id+\iota} & 0\\
      0 \ar[r] & \CFa(Y)\ar[r]^-{i} & \CFa(Y_{-1}(K))\ar[r]^-{p} & \CFa(Y_0(K))\ar[r] & 0
    }
  \end{equation}
  where the rows are short exact sequences inducing the surgery
  exact triangle on homology, and the diagonal arrows are the
  homotopies $G'=(\Id\DT G)\circ(\Psi_0\DT\Id^2)\circ \Omega$ and $H'=(\Id\DT H)\circ(\Psi_0\DT\Id^2)\circ\Omega$.

  The theorem now follows from the commutative diagram~\eqref{eq:CF-tri-diag} and homological algebra (cf.~\cite[Proof of Proposition 4.1]{HM:involutive}). That is,
  by
  Lemma~\ref{lem:surj-loc-com}, the homotopies in Diagram~\eqref{eq:CF-tri-diag} satisfy
  \begin{equation}
    \label{eq:homotopies-sat}
    p\circ G' = H'\circ i.
  \end{equation}
  Take the mapping cone of each vertical map in the diagram, to
  obtain a sequence of chain complexes
  \[
    0\to \Cone((\Id+\iota)_{\CFa(Y)})\to \Cone((\Id+\iota)_{\CFa(Y_{-1}(K))})\to\Cone((\Id+\iota)_{\CFa(Y_0(K))})\to 0
  \]
  where the maps are given by the matrices
  \[
    \begin{bmatrix}
      i & 0\\
      G & i
    \end{bmatrix}\qquad\text{and}\qquad
    \begin{bmatrix}
      p & 0\\
      H & p
    \end{bmatrix}.
  \]
  Homotopy commutativity of Diagram~\eqref{eq:CF-tri-diag}
  implies that these maps are chain maps, and
  exactness of the rows in Diagram~\eqref{eq:CF-tri-diag} together
  with Equation~\eqref{eq:homotopies-sat} implies that this sequence
  is exact. The associated long exact sequence is the statement of
  the lemma.
\end{proof}

\begin{remark}
  The proof of Theorem~\ref{thm:I-tri} also shows that the map induced by $\iota$ on homology commutes with the maps in the surgery exact triangle for $\HFa$. Lidman points out that this commutativity can be deduced more directly, by an argument that also applies to $\HF^\pm$. Specifically, the maps in the surgery exact triangle for $\HFa$ or $\HF^\pm$ are induced by cobordisms, and cobordism maps commute with the conjugation isomorphism (cf.~\cite[Theorem 3.6]{OS06:HolDiskFour}).
\end{remark}

\section{Involutive Floer homology as morphism spaces}\label{sec:hom-pair}
In this section we give some formulas purely in terms of $\CFDa$ for
the map $\iota\co\CFa(Y)\to\CFa(Y)$ and the map associated to a
mapping class, which may be helpful in computer implementations.

Given a type $D$ structure $\lsup{\Alg}P$ over a \dg algebra $\Alg$ over $\Field$,
consisting of a finite-dimensional underlying vector space $X$ and a
map $\delta^1\co X\to \Alg\otimes X$, the \emph{dual type $D$
  structure} $\overline{P}^{\Alg}$ has underlying vector space $X^*$, the dual space to $X$,
and operation
\[
  \delta^1_{\overline{P}}\co X^*\to X^*\otimes \Alg
\]
induced from $\delta^1\in\Hom(X,\Alg\otimes X)$ via the
identifications
\[
  \Hom(X,\Alg\otimes X)\cong X^*\otimes \Alg\otimes X\cong X\otimes X^*\otimes\Alg \cong \Hom(X^*,X^*\otimes\Alg).
\]

Given a bordered $3$-manifold $Y$ with boundary $F(\PMC)$, recall that
\[
  \CFAa(Y)\simeq \overline{\CFDa(-Y)}\DT\Alg(\PMC),
\]
\cite[Theorem 2]{LOTHomPair} so given bordered $3$-manifolds $Y_1$ and $Y_2$ with $\bdy Y_1=F(\PMC)=-\bdy Y_2$,
\begin{equation}\label{eq:hom-pairing}
  \begin{split}
    \CFa(Y_1\cup_\bdy Y_2)&\simeq\CFAa(Y_1)\DT\CFDa(Y_2)
    \simeq\overline{\CFDa(-Y_1)}\DT\Alg(\PMC)\DT\CFDa(Y_2)\\
    &=\Mor^{\Alg(\PMC)}(\CFDa(-Y_1),\CFDa(Y_2))
  \end{split}
\end{equation}
\cite[Theorem 1]{LOTHomPair}.

Using this, we explain how to compute the map $\iota$ without
mentioning $\CFAa$. Fix a Heegaard splitting $Y=\HB\cup_\psi\HB$. To
compute $\HFa(Y)$ one
first computes $\CFDa(\HB,\psi\circ \phi_0)$ and $\CFDa(\HB,\phi_0)$,
where $\phi_0\co F(\PMC_{g})\to\bdy\HB$ is the $0$-framing (as in
Section~\ref{sec:comp-hb}). The computation of
$\CFDa(\HB,\phi_0\circ \psi)$ uses a factorization of $\psi$ into arcslides and the identity
\[
  \CFDa(Y,\phi\circ\psi)\simeq \Mor^{\Alg(\PMC)}(\CFDDa(-\psi),\CFDa(\phi))
\]
(see~\cite{LOT4}). Then one uses
Formula~\eqref{eq:hom-pairing}. Indeed, this algorithm has already
been implemented by Lipshitz-Ozsv\'ath-Thurston~\cite{LOT4} and
Zhan~\cite{Zhan:bfh}.

Recall that a \DA\ bimodule $\lsup{\Blg}P_\Alg$ is called \emph{quasi-invertible} if there is a \DA\ bimodule $\lsup{\Alg}Q_\Blg$ so that
\[
  \lsup{\Blg}P_\Alg\DT\lsup{\Alg}Q_\Blg\simeq \lsup{\Blg}[\Id_\Blg]_\Blg \qquad\text{ and }\qquad
  \lsup{\Alg}Q_\Blg\DT\lsup{\Blg}P_\Alg\simeq \lsup{\Alg}[\Id_\Alg]_\Alg.
\]

Let $\Mor^{\Blg}(\lsup{\Blg}P_\Alg,\lsup{\Blg}P_\Alg)$ denote the
complex of left type $D$ morphisms of $P$. This morphism complex is an
$\Alg$-bimodule. (The module structure is somewhat intricate;
see~\cite[Section 2.3.4]{LOT2}.)

We have the following Yoneda lemma:
\begin{lemma}\label{lem:Yoneda}
  Let $\Alg$ and $\Blg$ be \dg algebras and $\lsup{\Blg}P_\Alg$ a
  quasi-invertible \DA\ bimodule. Then there is a quasi-isomorphism of
  $\Ainf$-bimodules
  \[
    \Omega\co\lsub{\Alg}\Alg_\Alg\stackrel{\simeq}{\longrightarrow}\Mor^{\Blg}(\lsup{\Blg}P_\Alg,\lsup{\Blg}P_\Alg)
  \]
  which sends the multiplicative identity $1\in\Alg$ to the identity
  morphism $\Id_P$. More generally, the $\Ainf$-bimodule map $\Omega$ is
  given by
  \begin{equation}\label{eq:Yoneda-is-hard}
    \Omega_{m,1,n}(a_1,\dots,a_m,a,a'_1,\dots,a'_n)(x)=\delta^1_{1+m+1+n}(x,a_1,\dots,a_m,a,a'_1,\dots,a'_n)
  \end{equation}
  where $\delta^1_k$ is the structure map of $P$.
\end{lemma}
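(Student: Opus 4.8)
The plan is to verify directly that the formula~\eqref{eq:Yoneda-is-hard} defines an $\Ainf$-bimodule morphism and then to show it is a quasi-isomorphism by a degeneration/filtration argument. First I would check that $\Omega$ as defined actually lands in the morphism complex: for fixed $a\in\Alg$ and sequences $a_1,\dots,a_m$, $a'_1,\dots,a'_n$, the assignment $x\mapsto \delta^1_{1+m+1+n}(x,a_1,\dots,a_m,a,a'_1,\dots,a'_n)$ is a map $P\to \Blg\otimes P$ of the right type, so it is a candidate element of $\Mor^{\Blg}(P,P)$; and the internal differential on $\Mor^{\Blg}(P,P)$ applied to it is computed by the terms of the \DA\ bimodule relations for $P$ that involve $\delta^1_1$ on either the $\Blg$ side or inside $\delta^1_k$. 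Next, I would write out the $\Ainf$-bimodule relation that $\Omega=\{\Omega_{m,1,n}\}$ must satisfy and observe that it is \emph{exactly} the full collection of \DA\ bimodule structure relations for $P$, re-bracketed: the terms where consecutive $\Alg$-inputs are multiplied match the bimodule multiplications on $\lsub{\Alg}\Alg_\Alg$, the terms where a string of $\Alg$-inputs together with $x$ feeds into a single $\delta^1_k$ and then the output feeds into another $\delta^1_\ell$ match the composition in $\Mor^{\Blg}(P,P)$, and the $\Blg$-side $\mu$-operations match the $\Blg$-module structure used to define that morphism complex. In other words, $\Omega$ being an $\Ainf$-bimodule map is a formal consequence of $P$ being a \DA\ bimodule; this is the bookkeeping heart of the argument, and it is where I expect to spend the most care, since keeping track of which of the many terms in the \DA\ relations corresponds to which term in the target bimodule structure is genuinely intricate (the module structure on $\Mor^{\Blg}(P,P)$ is the complicated one from~\cite[Section 2.3.4]{LOT2}).

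With $\Omega$ established as a morphism, the remaining point is that it is a quasi-isomorphism. Here I would use quasi-invertibility: let $\lsup{\Alg}Q_\Blg$ be a \DA\ inverse to $P$, so $P\DT Q\simeq \lsup{\Blg}[\Id]_\Blg$ and $Q\DT P\simeq \lsup{\Alg}[\Id]_\Alg$. Tensoring is an equivalence of the relevant homotopy categories, so $\Mor^{\Blg}(P,P)$ is quasi-isomorphic as an $\Ainf$-bimodule to $\Mor^{\Blg}(P\DT Q\DT P\DT Q,\, P\DT Q\DT P\DT Q)$, and hence (inserting and removing the identity bimodules) to $\Mor^{\Blg}(\lsup{\Blg}[\Id]_\Blg,\lsup{\Blg}[\Id]_\Blg)$ — or, more cleanly, one reduces to the case $P=\lsup{\Blg}[\Id]_\Blg$ with $\Alg=\Blg$, where $\Mor^{\Blg}([\Id]_\Blg,[\Id]_\Blg)$ is manifestly $\lsub{\Blg}\Blg_\Blg$ and $\Omega$ is the identity. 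I would then check that this chain of identifications carries $\Omega$ for $P$ to $\Omega$ for $[\Id]$, which amounts to naturality of the construction~\eqref{eq:Yoneda-is-hard} under the tensor-product equivalences — a diagram chase using associativity of $\DT$.

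The main obstacle, as indicated, is the term-by-term matching in the second step: making precise that the $\Ainf$-bimodule relation for $\Omega$ is literally the \DA\ structure equation for $P$, given that the $\Ainf$-bimodule structure on $\Mor^{\Blg}(P,P)$ is defined by a somewhat involved formula. An alternative that avoids some of this bookkeeping is to first prove the statement only on the level of the underlying maps of $\Ainf$-modules (fixing the left $\Alg$-action, say), which is the ordinary Yoneda embedding for type \DA\ modules and is recorded in the bordered literature, and then upgrade to a bimodule statement by symmetry; but since the application in this section wants the bimodule refinement, I would do the direct check. I would also remark that taking $a=1$ and $m=n=0$ in~\eqref{eq:Yoneda-is-hard} gives $\Omega_{0,1,0}(1)(x)=\delta^1_2(x,1)=x$, i.e. $\Omega(1)=\Id_P$, which is the asserted normalization, and that when $P=\CFDAa(\phi)$ for a mapping class $\phi$ this $\Omega$ is (by the rigidity results of Section~\ref{sec:rigid}) the unique graded quasi-isomorphism, tying the lemma back to the computations earlier in the paper.
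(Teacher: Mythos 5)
Your proposal is correct in outline but is organized differently from the paper's proof, and one step needs repair. The paper does not verify Formula~\eqref{eq:Yoneda-is-hard} directly: it \emph{defines} $\Omega$ as a composition $G\circ F$, where $F\co\Alg\to\Mor^{\Alg}(\lsup{\Alg}[\Id]_\Alg,\lsup{\Alg}[\Id]_\Alg)$ is the evident homotopy equivalence $a\mapsto(1\mapsto a\otimes 1)$ and $G=\Id_P\DT\cdot$ is a quasi-isomorphism because $P\DT\cdot$ is a quasi-equivalence of dg categories (this is where quasi-invertibility enters); the explicit formula is then read off by tracing through the definitions. This makes both the morphism property and the quasi-isomorphism property automatic, and sidesteps the term-by-term comparison of the $\Ainf$-bimodule relation for $\Omega$ with the \DA\ structure equations for $P$ that you correctly identify as the bookkeeping heart of your version. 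Your direct verification is a legitimate alternative (it is the standard Yoneda computation, and the check does go through), but it is substantially more work. The one genuine problem is in your quasi-isomorphism step: the chain $\Mor^{\Blg}(P,P)\simeq\Mor^{\Blg}(P\DT Q\DT P\DT Q,P\DT Q\DT P\DT Q)\simeq\Mor^{\Blg}(\lsup{\Blg}[\Id]_\Blg,\lsup{\Blg}[\Id]_\Blg)$ lands in a $\Blg$-bimodule quasi-isomorphic to $\lsub{\Blg}\Blg_\Blg$, not to $\lsub{\Alg}\Alg_\Alg$, so as written it cannot prove the statement (the algebras $\Alg$ and $\Blg$ are only Morita equivalent, not quasi-isomorphic). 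The correct reduction is the one-sided one: $f\mapsto\Id_Q\DT f$ identifies $\Mor^{\Blg}(P,P)$ with $\Mor^{\Alg}(Q\DT P,Q\DT P)\simeq\Mor^{\Alg}(\lsup{\Alg}[\Id]_\Alg,\lsup{\Alg}[\Id]_\Alg)\simeq\lsub{\Alg}\Alg_\Alg$ compatibly with the $\Alg$-bimodule structures --- which is, up to direction, exactly the paper's map $G$. Your ``more cleanly'' alternative gestures at this, but should be stated in this form.
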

\begin{proof}
  Let $\Mor^{\Alg}(\lsup{\Alg}[\Id_\Alg]_\Alg,\lsup{\Alg}[\Id_\Alg]_\Alg)$ be
  the chain complex of type $D$ structure morphisms. Then the map
  $F\co \Alg\to \Mor_{\Alg}(\lsup{\Alg}[\Id_\Alg]_\Alg,\lsup{\Alg}[\Id_\Alg]_\Alg)$
  defined by
  \begin{align*}
    F_1(a)&=(1\mapsto a\otimes 1)\\
    F_n&=0\qquad\qquad\qquad n>1
  \end{align*}
  is a chain homotopy equivalence. Next, since $P$ is
  quasi-invertible, the functor $P\DT\cdot$ is a quasi-equivalence of
  \dg categories. Thus, the map
  \begin{align*}
    G\co \Mor^{\Alg}(\lsup{\Alg}[\Id_\Alg]_\Alg,\lsup{\Alg}[\Id_\Alg]_\Alg)&\to\Mor^{\Blg}(P\DT[\Id_\Alg],P\DT[\Id_\Alg]),\\
    G(f)&=\Id_P\DT f
  \end{align*}
  is a quasi-isomorphism. (Compare~\cite[Proposition 2.3.36]{LOT2}.)
  The composition $G\circ F$ is the desired equivalence. Tracing
  through the definitions gives the Formula~\eqref{eq:Yoneda-is-hard}.
\end{proof}

\begin{corollary}\label{cor:our-Yoneda}
  Under the identification
  \[
    \overline{\CFDAa(\AZ)}\DT\CFAAa(\AZ)\DT\CFDAa(\AZ)\simeq\Mor^{\Alg(\PMC)}(\CFDAa(\AZ),\CFDAa(\AZ)),
  \]
  the unique homogeneous homotopy equivalence (of $\Ainf$-bimodules)
  \[
    \Alg(\PMC)=\CFAAa(\AZ)\stackrel{\Omega}{\longrightarrow}\Mor^{\Alg(\PMC)}(\CFDAa(\AZ),\CFDAa(\AZ))
  \]
  is given by 
  \begin{align*}
    \Omega_1(a)(x)&=\delta^1_2(x,a)\\
    \Omega_n(a_1,\dots,a_n)&=0 \qquad\qquad\qquad n>1.
  \end{align*}
\end{corollary}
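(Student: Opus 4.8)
The plan is to deduce the corollary as the special case $P=\CFDAa(\AZ)$, $\Alg=\Blg=\Alg(\PMC)$ of the Yoneda lemma, Lemma~\ref{lem:Yoneda}, so that most of the content has already been established. First I would verify the hypothesis of that lemma: $\CFDAa(\AZ)$ is quasi-invertible, since $\AZ\cup\bAZ$ represents the identity diffeomorphism of $F(\PMC)$ (\cite[Corollary~4.5]{LOTHomPair}, recalled at the end of Section~\ref{sec:AZ}), whence the pairing theorem gives $\CFDAa(\bAZ)\DT\CFDAa(\AZ)\simeq\CFDAa(\Id)$ and likewise in the other order, so that $\CFDAa(\bAZ)$ serves as a quasi-inverse. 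Together with the identification $\CFAAa(\AZ)\simeq\Alg(\PMC)$ from Section~\ref{sec:AZ}, Lemma~\ref{lem:Yoneda} then produces a quasi-isomorphism of $\Ainf$-bimodules $\Omega\co\Alg(\PMC)=\CFAAa(\AZ)\to\Mor^{\Alg(\PMC)}(\CFDAa(\AZ),\CFDAa(\AZ))$ whose components are given by Formula~\eqref{eq:Yoneda-is-hard}: the $(m,1,n)$-component sends $(a_1,\dots,a_m,a,a'_1,\dots,a'_n)$ to the type $D$ morphism $x\mapsto\delta^1_{1+m+1+n}(x,a_1,\dots,a_m,a,a'_1,\dots,a'_n)$, where $\delta^1_k$ denotes the structure maps of $\CFDAa(\AZ)$.

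The second step is a substitution. By the explicit description of $\CFDAa(\AZ)$ in Section~\ref{sec:AZ}, the only nonzero structure maps are $\delta^1_1$ and $\delta^1_2$; all $\delta^1_k$ with $k\geq 3$ vanish. Hence $\delta^1_{1+m+1+n}$ vanishes unless $1+m+1+n\leq 2$, which forces $m=n=0$, and in that case the formula reads $\Omega_{0,1,0}(a)(x)=\delta^1_2(x,a)$. In the notation of the corollary this is precisely $\Omega_1(a)(x)=\delta^1_2(x,a)$ together with $\Omega_n=0$ for $n>1$.

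Finally, for the \emph{uniqueness} statement: the map $\Omega$ constructed above is built from the structure maps $\delta^1_k$, which are homogeneous, so $\Omega$ is graded; on the other hand, $\CFAAa(\AZ)\simeq\Alg(\PMC)$ is a model for $\CFAAa$ of the identity cobordism of $F(\PMC)$, so the target $\Mor^{\Alg(\PMC)}(\CFDAa(\AZ),\CFDAa(\AZ))$ --- being graded homotopy equivalent to it via $\Omega$ --- is covered by the rigidity statement of Lemma~\ref{lem:MCG-rigid}, which asserts there is a unique graded homotopy equivalence between the two up to homotopy; thus $\Omega$ represents it. The only mildly delicate point I expect to have to spell out is that the $\Ainf$-bimodule structure on $\Mor^{\Alg(\PMC)}(\CFDAa(\AZ),\CFDAa(\AZ))$ coming from the identification $\overline{\CFDAa(\AZ)}\DT\CFAAa(\AZ)\DT\CFDAa(\AZ)\simeq\Mor^{\Alg(\PMC)}(\CFDAa(\AZ),\CFDAa(\AZ))$ agrees with the one implicit in Lemma~\ref{lem:Yoneda}, so that the grading bookkeeping and the identification with the identity cobordism are consistent; this is handled exactly as in the proof of Lemma~\ref{lem:Yoneda}, where $\Omega$ is factored through the tautological endomorphism complex of $\CFDAa(\Id)$.
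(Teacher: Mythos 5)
Your proposal is correct and follows the same route as the paper, which deduces the corollary directly from Lemma~\ref{lem:Yoneda} together with the vanishing of $\delta^1_k$ for $k\geq 3$ on $\CFDAa(\AZ)$. The extra details you supply --- checking quasi-invertibility via $\bAZ\cup\AZ\simeq\Id$ and invoking Lemma~\ref{lem:MCG-rigid} for the uniqueness clause --- are exactly the points the paper leaves implicit, and they are handled correctly.
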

\begin{proof}
  This is immediate from Lemma~\ref{lem:Yoneda} and the fact that the
  structure map $\delta^1_n$ for $\CFDAa(\AZ)$ vanishes for $n>2$.
\end{proof}

\begin{theorem}\label{thm:Mor-iota}
  Fix a Heegaard splitting $Y=(-\HB_0)\cup\HB_1$ of $Y$. Then up to
  homotopy the map $\iota\co \CFa(Y)\to\CFa(Y)$ is given by the
  composition
    {\small
  \begin{align*}
    \Mor^{\Alg(\PMC)}(\CFDa(\HB_0),\CFDa(\HB_1))
    &\stackrel{\Omega}{\longrightarrow} 
    \Mor^{\Alg(\PMC)}(\CFDAa(\AZ)\DT\CFDa(\HB_0),\CFDAa(\AZ)\DT\CFDa(\HB_1))\\
    &\stackrel{\Psi}{\longrightarrow}
      \Mor^{\Alg(\PMC)}(\CFDa(\HB_0),\CFDa(\HB_1))
  \end{align*}}
where
{\small
  \[
    \Omega\co \Mor^{\Alg(\PMC)}(\CFDa(\HD_0),\CFDa(\HD_1))\to \Mor^{\Alg(\PMC)}(\CFDAa(\AZ)\DT\CFDa(\HD_0),\CFDAa(\AZ)\DT\CFDa(\HD_1))
  \]}
  sends a morphism $f$ to $\Id_{\CFDAa(\AZ)}\DT f$ and, if
  $\Psi_i\co \CFDAa(\AZ)\DT\CFDa(\HD_0)\to \CFDa(\HD_i)$ is the homogeneous
  homotopy equivalence, then $\Psi$ sends a morphism
  $g\in
  \Mor^{\Alg(\PMC)}(\CFDAa(\AZ)\DT\CFDa(\HD_0),\CFDAa(\AZ)\DT\CFDa(\HD_1))$
  to $\Psi_1\circ g\circ \Psi_0^{-1}$.
\end{theorem}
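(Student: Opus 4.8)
The plan is to start from Theorem~\ref{thm:iota-right}, which identifies $\iota\co\CFa(Y)\to\CFa(Y)$ with the composition of Formula~\eqref{eq:decomp-iota}, and to rewrite that composition stage by stage using the Hom-pairing duality of \cite{LOTHomPair}: Equation~\eqref{eq:hom-pairing} together with $\CFAa(Y)\simeq\overline{\CFDa(-Y)}\DT\Alg(\PMC)$. Concretely, fix bordered Heegaard diagrams $\HD_0$ and $\HD_1$ with $\HD_0$ a diagram for $-\HB_0$ and $\HD_1$ a diagram for $\HB_1$, so that $Y=Y(\HD_0\cup_\bdy\HD_1)$ as in \eqref{eq:decomp-iota}, and $\CFDa(-\HD_0)\simeq\CFDa(\HB_0)$, $\CFDa(\HD_1)\simeq\CFDa(\HB_1)$. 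Then \eqref{eq:hom-pairing} identifies the first (and last) line of \eqref{eq:decomp-iota} with $\Mor^{\Alg(\PMC)}(\CFDa(\HB_0),\CFDa(\HB_1))$, and the task is to check that, under this and the analogous identifications at the other stages, the arrow $\eta$ becomes the identity, the arrow $\Omega$ becomes the $\Omega$ of the theorem, and the arrow $\Psi$ becomes the $\Psi$ of the theorem.

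For the $\eta$ step, recall that the isomorphisms $\eta$ on $\CFAa$ and on $\CFDa$ used in the definition of $\CFIa$ carry a generator of a bordered diagram to the same subset of the Heegaard surface, and that the duality isomorphisms of \cite{LOTHomPair} relating $\CFAa$, $\overline{\CFDa(-\cdot)}$, $\CFDa(\cdot^\beta)$, and $\overline{\CFDa(\cdot)}$ are proved by exactly the same kind of generator-level identification. Tracing through, the composite $\Mor^{\Alg(\PMC)}(\CFDa(\HB_0),\CFDa(\HB_1))\simeq\CFAa(\HD_0)\DT\CFDa(\HD_1)\stackrel{\eta\DT\eta}{\longrightarrow}\CFAa(\conj{\HD_0})\DT\CFDa(\conj{\HD_1})\simeq\Mor^{\Alg(\PMC)}(\CFDa(\HB_0),\CFDa(\HB_1))$ is the identity, up to canonical homotopy. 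So it remains only to match the arrows $\Omega$ and $\Psi$ of \eqref{eq:decomp-iota}.

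For the $\Omega$ step, the point is to rewrite $\CFAa(\conj{\HD_0})\DT\CFDAa(\bAZ)$ without collapsing the $\bAZ$ factor: using the pairing theorem, $\CFAa(Y)\simeq\overline{\CFDa(-Y)}\DT\Alg(\PMC)$, the gluing lemmas for $\AZ$ and $\bAZ$ together with $\CFAAa(\bAZ)\simeq\overline{\Alg(\PMC)}$ and $\bAZ=-\AZ(-\PMC)$, this product is identified with $\overline{\CFDAa(\AZ)\DT\CFDa(\HB_0)}\DT\Alg(\PMC)$; pairing further with $\CFDAa(\AZ)\DT\CFDa(\conj{\HD_1})\simeq\CFDAa(\AZ)\DT\CFDa(\HB_1)$ and applying \eqref{eq:hom-pairing} once more identifies the target of $\Omega$ in \eqref{eq:decomp-iota} with $\Mor^{\Alg(\PMC)}(\CFDAa(\AZ)\DT\CFDa(\HB_0),\CFDAa(\AZ)\DT\CFDa(\HB_1))$. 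Under these identifications the arrow $\Omega$ of \eqref{eq:decomp-iota}, being the tensor product of the equivalence of Corollary~\ref{cor:Phi} with identity maps, becomes $f\mapsto\Id_{\CFDAa(\AZ)}\DT f$; this is exactly the content of Corollary~\ref{cor:our-Yoneda} (the Yoneda lemma), which says that, modulo the duality, the equivalence $\CFDAa(\Id)\to\CFDAa(\bAZ)\DT\CFDAa(\AZ)$ carries the canonical generator to $\Id_{\CFDAa(\AZ)}$. This is the theorem's $\Omega$.

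For the $\Psi$ step, in \eqref{eq:decomp-iota} $\Psi$ is induced by Heegaard moves from $\AZ\cup\conj{\HD_1}$ to $\HD_1$ on the type $D$ side and from $\conj{\HD_0}\cup\bAZ$ to $\HD_0$ on the type $A$ side. Since $\AZ\cup\conj{\HD_1}$ and $\HD_1$ are both diagrams for the handlebody $\HB_1$, the first continuation map is a graded homotopy equivalence $\CFDAa(\AZ)\DT\CFDa(\HB_1)\to\CFDa(\HB_1)$ --- precisely the structure map $\Psi_1$ of $\CFDIa(\HB_1)$, whose homotopy class is the unique graded one by Lemma~\ref{lem:HB-rigid}, hence well defined. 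Transporting the type $A$ continuation map through $\CFAa(Y)\simeq\overline{\CFDa(-Y)}\DT\Alg(\PMC)$ --- and noting that $\overline{(\cdot)}$ is contravariant, so that a continuation of $Y$-diagrams on the $\CFAa$ side becomes a continuation of $(-Y)$-diagrams running the other way --- produces a graded homotopy equivalence $\CFDa(\HB_0)\to\CFDAa(\AZ)\DT\CFDa(\HB_0)$, again unique by Lemma~\ref{lem:HB-rigid}, namely $\Psi_0^{-1}$; on the morphism complex this acts by precomposition with $\Psi_0^{-1}$. Hence $\Psi$ in \eqref{eq:decomp-iota} becomes $g\mapsto\Psi_1\circ g\circ\Psi_0^{-1}$, which is the theorem's $\Psi$, and assembling the three steps proves the theorem. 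The main obstacle is the bookkeeping in the $\eta$ and $\Omega$ steps: one must verify that the Hom-pairing equivalence of \cite{LOTHomPair} is natural both under $\DT$-tensoring with the $\AZ$ and $\bAZ$ bimodules and under the continuation maps attached to Heegaard moves, with all the $\PMC$-versus-$-\PMC$ and $\alpha$-versus-$\beta$-bordering conventions tracked correctly; this is not conceptually hard, but it is where essentially all of the care lies. One could partly sidestep it by invoking Lemma~\ref{lem:HB-rigid}: once each arrow is known to be a graded homotopy equivalence of the correct source and target, it is forced up to homotopy.
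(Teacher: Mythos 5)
Your argument is correct in outline and shares the key ingredients with the paper's proof --- the Hom-pairing theorem~\eqref{eq:hom-pairing}, the Yoneda computation of Corollary~\ref{cor:our-Yoneda}, and the rigidity results of Section~\ref{sec:rigid} --- but it is organized differently. You derive the theorem from Theorem~\ref{thm:iota-right} by transporting each arrow of Formula~\eqref{eq:decomp-iota} through the duality $\CFAa(Y)\simeq\overline{\CFDa(-Y)}\DT\Alg(\PMC)$, which buys you the reuse of an already-proved statement at the cost of having to verify that the duality identifications are natural with respect to $\eta$, to tensoring with the $\AZ$/$\bAZ$ bimodules, and to the continuation maps. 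The paper instead bypasses Theorem~\ref{thm:iota-right} entirely: it realizes $\Mor^{\Alg(\PMC)}(\CFDa(\HB_0),\CFDa(\HB_1))$ as $\CFa((-\HD_0)\cup\AZ\cup\HD_1)$, applies the definition $\iota=\Phi\circ\eta$ directly to that closed diagram, splits the Heegaard moves from $\HD_0^\beta\cup\bAZ^\beta\cup\conj{\HD_1}$ back to $(-\HD_0)\cup\AZ\cup\HD_1$ into a stage supported near the middle $\bAZ^\beta$ and a stage supported on the two outer pieces, and uses the pairing theorem for triangles to identify the first stage with the map of Corollary~\ref{cor:our-Yoneda} (hence $f\mapsto\Id\DT f$) and the second with $g\mapsto\Psi_1\circ g\circ\Psi_0^{-1}$. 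One caveat about your closing remark: the rigidity lemmas cannot by themselves absorb all the bookkeeping, because the squares you need to commute have $\CFa(Y)$-type complexes as source and target, and there is no uniqueness statement for graded self-homotopy-equivalences of $\CFa(Y)$. Rigidity (Lemmas~\ref{lem:HB-rigid} and~\ref{lem:MCG-rigid}) only finishes the job after you have shown that each transported arrow is \emph{induced by} a map on a handlebody module or on the identity bimodule --- which is exactly what your explicit tracing, or the paper's appeal to the pairing theorem for triangles, is needed to establish.
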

This seems to be a succinct, and computer-friendly, description of the
map $\iota$.
\begin{proof}
  Choose a Heegaard diagram $\HD_i$ for $\HB_i$. Then the pairing theorem gives
  \[
    \Mor^{\Alg(\PMC)}(\CFDa(\HB_0),\CFDa(\HB_1))\simeq
    \CFa((-\HD_0)\cup\AZ\cup\HD_1)
  \]
  which is identified, via, $\eta$, with
  $\CFa(\HD_0^\beta\cup\bAZ^\beta\cup\conj{\HD_1}).$

  Similarly,
  \[
    \Mor^{\Alg(\PMC)}(\CFDAa(\AZ)\DT\CFDa(\HB_0),\CFDAa(\AZ)\DT\CFDa(\HB_1))\simeq
    \CFa(\HD_0^\beta\cup\AZ^\beta\cup\bAZ^\beta\cup\bAZ^\beta\cup\conj{\HD_1}).
  \]
  
  Consider a sequence of Heegaard moves
  \begin{align*}
    \HD_0^\beta\cup\bAZ^\beta\cup\conj{\HD_1}&\to(\HD_0^\beta\cup\AZ^\beta)\cup(\bAZ^\beta\cup\bAZ^\beta\cup\conj{\HD_1})\\
    &\to 
      (-\HD_0)\cup(\AZ\cup\HD_1)
  \end{align*}
  where the first arrow does not change the diagrams at the end and
  the second arrow consists of bordered Heegaard moves changing the
  diagrams on the two sides of the big union sign. There are two
  associated maps on $\CFa$. By the pairing theorem for triangles, the first map is induced by a map
  \[
    \Alg(\PMC)=\CFAAa(\bAZ^\beta)\to \CFAAa(\AZ^\beta\cup\bAZ^\beta\cup\bAZ^\beta)\simeq
    \Mor^{\Alg(\PMC)}(\CFDAa(\AZ),\CFDAa(\AZ)).
  \]
  By uniqueness, this map is the map $\Omega$ of
  Corollary~\ref{cor:our-Yoneda}. It follows from the definition of
  $\Omega$ and the pairing theorem that the induced map
    \[
    \Mor^{\Alg(\PMC)}(\CFDa(\HD_0),\CFDa(\HD_1))\to \Mor^{\Alg(\PMC)}(\CFDAa(\AZ)\DT\CFDa(\HD_0),\CFDAa(\AZ)\DT\CFDa(\HD_1))
  \]
  sends $f$ to $\Id\DT f$.  Similarly, by the pairing theorem for
  triangles, the second map is induced by an equivalence on each of
  the parenthesized pieces, and thus agrees with the map $\Psi$.
\end{proof}

The mapping class group action admits a similar description:  the action of $\chi$ is given by 
\begin{align*}
  \Mor^{\Alg(\PMC)}&(\CFDa(H_g,\phi_0),\CFDa(H_g,\phi_0\circ\psi))\\
  &\to \Mor^{\Alg(\PMC)}(\CFDAa(\chi^{-1})\DT\CFDa(H_g,\phi_0),\CFDAa(\chi^{-1})\DT\CFDa(H_g,\phi_0\circ\psi))\\
  &\to \Mor^{\Alg(\PMC)}(\CFDa(H_g,\phi_0),\CFDa(H_g,\phi_0\circ\psi))
\end{align*}
where the first map sends a morphism $f$ to $\Id\DT f$ and the second
sends $g$ to $\Theta_1\circ g\circ\Theta_0^{-1}$. We can rewrite this
using $\CFDDa(\chi)$ instead of $\CFDAa(\chi)$ as
\begin{align*}
  \Mor&{}^{\Alg(\PMC)}(\CFDa(H_g,\phi_0),\CFDa(H_g,\phi_0\circ\psi))\\
  &\to \Mor^{\Alg(\PMC)}\bigl(\Mor^{\Alg(\PMC)}(\CFDDa(\chi),\CFDa(H_g,\phi_0)),\Mor^{\Alg(\PMC)}(\CFDDa(\chi),\CFDa(H_g,\phi_0\circ\psi))\bigr)\\
  &\to \Mor^{\Alg(\PMC)}(\CFDa(H_g,\phi_0),\CFDa(H_g,\phi_0\circ\psi))
\end{align*}
where the first arrow sends a morphism $f$ to the morphism which sends
a morphism $h$ to $f\circ h$ and the second arrow is again induced by
the unique homotopy equivalences
$\Mor(\CFDDa(\chi),\CFDa(H_g,\phi_0))\simeq \CFDa(H_g,\phi_0)$ and
$\Mor(\CFDDa(\chi),\CFDa(H_g,\phi_0\circ\psi))\simeq
\CFDa(H_g,\phi_0\circ\psi)$. The proof that this gives the mapping
class group action is similar to the proof of
Theorem~\ref{thm:Mor-iota} and is left to the reader.

\section{Examples}\label{sec:examples}
For a knot $K$ in $S^3$, let $\Sigma(K)$ denote the branched double cover of $K$.  To
illustrate the algorithm for computing $\iota$, we finish the computation of $\HFIa(\Sigma(K))$
for knots $K$ through $10$ crossings.

If $\Sigma(K)$ is an $L$-space then, since $\Sigma(K)$ is a rational homology sphere with a unique
$\Spin$-structure, $\HFIa(\Sigma(K))\cong \Field^{\det(K)+1}$. That is, $\HFIa(\Sigma(K))$ has two generators for each conjugacy class of $\SpinC$-structures. The $Q$-action takes one generator corresponding to the $\Spin$-structure to the other, and vanishes on all other generators. All knots $K$
with $9$ or fewer crossings have $\Sigma(K)$ an $L$-space. Indeed, except for
$8_{19}=T(3,4)$, $9_{42}$ and $9_{46}$, every knot $K$ with $9$ or fewer crossings is
quasi-alternating~\cite{JS:quasi-alternating,Jablan:quasi-alternating}; for quasi-alternating
knots, $\Sigma(K)$ is an $L$-space~\cite{BrDCov}.
It turns out that $\Sigma(8_{19})$, $\Sigma(9_{42})$
and $\Sigma(9_{46})$ are $L$-spaces. (This can be checked using Zhan's computer
program~\cite{Zhan:bfh}.)

The $10$-crossing knots $K$ for which $\Sigma(K)$ is not an $L$-space are
listed in Table~\ref{tab:comps}. The computation of which of these
spaces are not $L$-spaces, and the dimensions of their Floer
homologies, was accomplished by Zhan. Computation of $\HFIa$ for
these manifolds was carried out by a modest extension of Zhan's
program, using the algorithm described above. The first two knots,
$10_{139}$ and $10_{145}$, are Montesinos knots, hence our our
computation is implied by (and agrees with) the computation of $\HFIm$
for Seifert fibered spaces~\cite{DM:involutive-plumbed}.  We
make a few further comments about the details of our implementation
below.

\begin{table}
  \centering
  \begin{tabular}{cccc}
    \toprule
    Knot $K$ & $\det(K)$ & $\dim\HFa(\Sigma(K))$ & $\dim\HFIa(\Sigma(K))$\\
    \midrule
    $10_{139}$ &3 &5 & 6 \\
    $10_{145}$ & 3& 5& 6\\
    $10_{152}$ & 11 & 13 & 14\\
    $10_{153}$ & 1 & 5 & 6\\
    $10_{154}$ & 13 & 15 & 16\\
    $10_{161}$ & 5 & 7 & 8\\
    \bottomrule\\
  \end{tabular}
  \caption{\textbf{The $10$-crossing knots with $\Sigma(K)$ not an $L$-space.}
    The table lists the dimensions of $\HFa(\Sigma(K))$ and $\HFIa(\Sigma(K))$,
    as computed by Zhan's program and its extension, for these knots, as well as
    $\det(K)=|H_1(\Sigma(K))|$. Computations of $\det(K)$ are taken from
    The Knot Atlas, katlas.org.}
  \label{tab:comps}
\end{table}

Both Zhan's code and our extension, which is now included in Zhan's package~\cite{Zhan:bfh}, are written in Python (version 2.7). Zhan's
code includes classes for chain complexes, type $D$ structures, and type \DA\
structures, as well as for morphisms between them. He also, of course,
implemented basic operations on these structures, including taking the box
tensor product of a type $D$ structure and a type \DA\ structure and computing
the morphism complex between two type $D$ structures. His program also automates
computation of $\HFa(\Sigma(K))$ given a bridge diagram for $K$. The algorithms
behind Zhan's code use properties of the bordered bimodules which appear only in
his thesis~\cite{Zhan14:thesis} to compute tensor products without writing down
all of the generators. (He calls this technique \emph{extending by the identity}
and the local objects that he extends \emph{local type \DA\ structures}.) The
upshot is that his code computes $\CFDa(\HB_0)$ and $\CFDa(\HB_1)$ efficiently.

In our extension, we implemented the bimodule $\CFDAa(\AZ)$, mapping cones of
maps between type $D$ structures and chain complexes, composition of morphisms
between type $D$ structures, and the tensor product of a morphism of type $D$
structures with the identity map of a type \DA\ structure. Computing mapping
cones gives some easy sanity checks: it makes testing whether maps are
quasi-isomorphisms trivial, by checking whether their mapping cones are
acyclic.

Our code computes the rank of $\HFIa$ by:
\begin{enumerate}
\item Computing $\CFDa(\HB_0)$, $\CFDa(\HB_1)$, $\CFDAa(\AZ)\DT\CFDa(\HB_0)$,
  and $\CFDAa(\AZ)\DT\CFDa(\HB_1)$, as well as various morphism complexes between them.
\item Computing a basis $\{f_1,\dots,f_n\}$ for
  $H_*\Mor(\CFDa(\HB_0),\CFDa(\HB_1))$, consisting of explicit cycles in
  $\Mor(\CFDa(\HB_0),\CFDa(\HB_1))$.
\item For each basis element $f_i$, computing $\Id_{\CFDAa(\AZ)}\DT f_i$. 
\item Computing a basis for $H_*\Mor(\CFDa(\HB_0),\CFDAa(\AZ)\DT\CFDa(\HB_0))$
  and for $H_*\Mor(\CFDAa(\AZ)\DT\CFDa(\HB_1),\CFDa(\HB_1))$.  Even
  though we do not implement the grading for $\CFDAa(\AZ)$, the way that Zhan's
  code computes homology automatically gives bases of homogeneous elements. Each
  of these bases has $2^k$ elements where $k$ is the genus of the Heegaard
  splitting. For the computations in Table~\ref{tab:comps}, $k=2$, so each of
  these bases has $4$ elements.
\item Searching through these bases to find the unique homotopy equivalences
  $\Psi_0^{-1}$ and $\Psi_1$.
\item For each $f_i$, computing the composition $\Psi_1\circ \bigl(\Id_{\CFDAa(\AZ)}\DT f_i\bigr)\circ \Psi_0^{-1}$. The map
  \[
    f_i\mapsto \Psi_1\circ \bigl(\Id_{\CFDAa(\AZ)}\DT f_i\bigr)\circ \Psi_0^{-1}
  \]
  is a map
  \[
    [H_*\Mor(\CFDa(\HB_0),\CFDa(\HB_1))]\to \Mor(\CFDa(\HB_0),\CFDa(\HB_1))
  \]
  representing $\iota$. (Mapping from the homology of the complex to the complex
  means we do not have to choose a projection from the morphism complex to its
  homology.) Abusing notation, we call this map $\iota$.
\item There is also an inclusion
  \[
    \Id\co H_*\Mor(\CFDa(\HB_0),\CFDa(\HB_1))\to \Mor(\CFDa(\HB_0),\CFDa(\HB_1))
  \]
  induced by the choice of cycles $f_1,\dots,f_n$. The involutive Floer homology
  is then the homology of $\Cone(\iota+\Id)$.
\end{enumerate}

The computations in Table~\ref{tab:comps} are fairly slow: on a circa 2016
MacBook Pro with 16 GB of RAM the code computes $\HFa(\Sigma(K))$ within a few
minutes but each computation of $\HFIa(\Sigma(K))$ takes up to several hours. (We
have not made a serious attempt to improve the efficiency of our code.)

\subsection{Computing \texorpdfstring{$\HFIm$}{HFI-minus} from \texorpdfstring{$\HFIa$}{HFI-hat}}

Sometimes, one can recover $\HFIm(Y)$ from $\HFa(Y)$ and
$\HFIa(Y)$. (This is desirable given that most known applications use $\HFIm(Y)$ or $\HFI^+(Y)$ rather than $\HFIa(Y)$.) We illustrate the process of recovering $\HFIm$ by computing $\HFIm(\Sigma(10_{161}))$
up to a grading shift. 

Let $\spinc_0$ denote the $\Spin$-structure on $\Sigma(10_{161})$. If
$\spinc\in\spinC(\Sigma(10_{161}))$ is any other $\SpinC$-structure
then, since $\HFa(\Sigma(10_{161}),\spinc)\cong\Field$,
$\HFI^-(\Sigma(10_{161}),[\spinc])\cong \Field[U]\oplus\Field[U]$ with
trivial $Q$-action, where $[\spinc]$ denotes the orbit consisting of the $\SpinC$ structure and its conjugate. So, for the rest of the section we focus on
$\HFI^-(\Sigma(10_{161}),\spinc_0)$.

\begin{lemma} \label{lem:minus-from-hat} Let $d = d(\Sigma(10_{161}, \spinc_0))$ be the Heegaard Floer correction term of the $\SpinC$-structure $\spinc_0$ on $\Sigma(10_{161})$. Then
  \[
    \HFIm(\Sigma(10_{161}), \spinc_0) \simeq \Field[U]_{(d-3)}\langle a \rangle\oplus \Field[U]_{(d-2)} \langle b \rangle \oplus (\Field)_{(d-2)}\langle c \rangle
  \]
  with $Q$-action given by $Qa = Ub$ and $Qb=Qc=0$.
\end{lemma}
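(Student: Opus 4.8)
The plan is to reduce to a small, explicit model for $\CF^-(\Sigma(10_{161}),\spinc_0)$ together with its conjugation map $\iota$, and then to compute the homology of the associated involutive complex \eqref{eq:CFI-diff} directly.

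First I would record the shape of $\HF^-$. Since $\Sigma(10_{161})$ is a rational homology sphere, $\HF^-(\Sigma(10_{161}),\spinc_0)\cong\Field[U]\oplus\HF^-_{\mathrm{red}}$, where the free tower has bottom element in grading $d$ and $\HF^-_{\mathrm{red}}$ is a finite-dimensional torsion $\Field[U]$-module. Because $\dim_\Field\HFa(\Sigma(10_{161}),\spinc_0)=3$ (Table~\ref{tab:comps}) and $\HFa$ and $\HF^-$ are related by the multiplication-by-$U$ exact triangle, $\HF^-_{\mathrm{red}}$ has exactly one cyclic summand; the graded versions of $\HFa(\Sigma(10_{161}),\spinc_0)$ and of the $\Field[Q]/(Q^2)$-module $\HFIa(\Sigma(10_{161}),\spinc_0)$ (the latter computed by the algorithm described above, and encoding the map $\iota_*$) then identify that summand as $\HF^-_{\mathrm{red}}\cong\Field$, concentrated in one grading.

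Next I would fix a reduced $\Field[U]$-complex $C$ realizing $\CF^-(\Sigma(10_{161}),\spinc_0)$: a tower generator $x$ with $\partial x=0$ in grading $d$, together with a pair $p,q$ realizing the reduced homology, with $\partial p=Uq$ and $\partial q=0$. The conjugation map is realized by a $\Field[U]$-linear, grading-preserving chain map $\iota\co C\to C$ with $\iota^2\simeq\Id$ inducing the prescribed map on homology. The grading constraint and the chain-map equation leave only finitely many candidates for $\iota$; among them, the conditions $\iota^2\simeq\Id$ and ``$\iota_*$ equals the computed involution on $\HF^-$'' single out a unique one up to homotopy, and this $\iota$ acts non-trivially (it mixes the tower generator with the reduced generator)---a feature visible already from the rank of $\Id+\iota_*$ on $\HFa(\Sigma(10_{161}),\spinc_0)$, equivalently from $\dim\HFIa(\Sigma(10_{161}))$.

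Finally I would build $\CFI^-(\Sigma(10_{161}),\spinc_0)$ from $(C,\iota)$ via \eqref{eq:CFI-diff} and compute its homology over $\Field[U,Q]/(Q^2)$ by hand: one finds cycles represented by $q$, $Qx$, and a combination of the form $Ux+Qp$, with boundaries generated by $\partial x=Qq$, $\partial p=Uq$, and $\partial(Qp)=UQq$, which produces the module in the statement---two towers $\langle a\rangle,\langle b\rangle$ and one copy of $\Field\langle c\rangle$, with $Qa=Ub$ and $Qb=Qc=0$---and the grading bookkeeping gives the stated degrees. The main obstacle is the middle step: rigidly pinning down the chain-level $\iota$, and in particular establishing that it is not homotopic to the identity-like model, since it is precisely this non-triviality that forces the non-split extension recorded by the relation $Qa=Ub$ rather than a split $\Field[U,Q]/(Q^2)$-module. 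Granting that, the mapping-cone computation is routine homological algebra, paralleling the proof of \cite[Proposition~4.1]{HM:involutive}.
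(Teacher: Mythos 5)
Your plan is correct and arrives at the right module, but it takes a genuinely different route from the paper in its second half. Both arguments share the first step: using the graded computation of $\HFa(\Sigma(10_{161}),\spinc_0)$ and the fact that $\dim\HFIa=\dim\HFa+1$ to pin down $\HF^-(\Sigma(10_{161}),\spinc_0)\cong\Field[U]\langle\alpha\rangle\oplus\Field\langle\beta\rangle$ and to force $\iota_*(\alpha)=\alpha+\beta$, $\iota_*(\beta)=\beta$. (Be aware that transferring the nontriviality of $\iota_*$ from $\HFa$ to $\HF^-$ is not automatic; the paper does it by noting that the $U$-exact sequence commutes with $\iota_*$, that the grading-$q$ part of $\HFa$ is exactly the image of $\HF^-$, and that there are only two $U$-equivariant involutions on this $\HF^-$, the identity being excluded. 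Your phrase ``visible already from the rank of $\Id+\iota_*$ on $\HFa$'' is the right idea but needs this argument spelled out.) After that the paper stays at the level of homology: it invokes the exact triangle relating $\HFIm$, $\HF^-$, and $Q\cdot\HF^-[-1]$ from \cite[Proposition 4.6]{HM:involutive} and observes that, although $\HFIm$ is in general not determined by $\iota_*$ alone, in this small case the cone of $Q(1+\iota_*)$ is the unique $\Field[U,Q]/(Q^2)$-module fitting into the triangle. You instead descend to the chain level, replace $\CF^-$ by the minimal free $\Field[U]$-model $\langle x,p,q\mid \partial p=Uq\rangle$, realize $\iota$ there, and compute the cone of $Q(1+\iota)$ by hand. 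Your approach is more robust (it would still work if the exact triangle failed to determine the module uniquely) at the cost of a chain-level rigidity claim; the paper's is shorter but leans on the smallness of the example.

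On the point you flag as the main obstacle: it is in fact not an obstacle. In the minimal model the gradings alone force $\iota(x)=x+q$, $\iota(q)=q$, $\iota(p)=p$ (the degree-$(d-2)$ part of the complex is spanned by $x$ and $q$ with no boundaries there, and the degree-$(d-3)$ part is spanned by $p$), so the chain-level $\iota$ is literally unique, not just unique up to homotopy; and in any case the quasi-isomorphism type of $\Cone(Q(1+\iota))$ depends only on the homotopy class of $\iota$, so homotopy-uniqueness would suffice. Your final homology computation (cycles $q$, $Qx$, $Ux+Qp$; boundaries $Qq$, $Uq$, $UQq$) is correct and reproduces the stated module with $Qa=Ub$.
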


In \cite{HM:involutive}, C.~Manolescu and the first author extract two invariants of $\Field$-homology cobordism from involutive Heegaard Floer homology, called the \emph{involutive correction terms}. Given a rational homology sphere $Y$ and a conjugation-invariant $\SpinC$-structure $\spinc$, in terms of the minus variant, these invariants are
\[ \dl(Y,\spinc) = \max \{r \mid \exists \ x \in \HFIm_r(Y, \spinc), \forall \ n, \ U^nx\neq 0 \ \text{and} \ U^nx \notin \operatorname{Im}(Q)\} + 1 \]
and
\[ \du(Y,\spinc) = \max \{r \mid \exists \ x \in \HFIm_r(Y,\spinc),\forall \ n, U^nx\neq 0; \exists \ m\geq 0 \ \operatorname{s.t.} \ U^m x \in \operatorname{Im}(Q)\} +2. \]

We therefore have the following corollary of Lemma~\ref{lem:minus-from-hat}:

\begin{corollary}
The involutive correction terms of $\Sigma(10_{161})$ in the unique $\Spin$ structure are related to $d=d(\Sigma(10_{161}), \spinc_0)$ by
\begin{align*}
\dl(\Sigma(10_{161}), \spinc_0) &= d-2 \\
\du(\Sigma(10_{161}), \spinc_0) &= d.
\end{align*}
\end{corollary}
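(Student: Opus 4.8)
The plan is to deduce the corollary by unwinding the definitions of $\dl$ and $\du$ directly against the explicit module computed in Lemma~\ref{lem:minus-from-hat}. Write $M=\HFIm(\Sigma(10_{161}),\spinc_0)\simeq \Field[U]_{(d-3)}\langle a\rangle\oplus\Field[U]_{(d-2)}\langle b\rangle\oplus(\Field)_{(d-2)}\langle c\rangle$, so that $a$ has degree $d-3$, both $b$ and $c$ have degree $d-2$, $Uc=0$, and $Qa=Ub$ while $Qb=Qc=0$. The first step is to identify $\operatorname{Im}(Q)\subseteq M$: since $Q$ is $\Field[U]$-linear of degree $-1$, its image is the $\Field[U]$-submodule generated by $Qa=Ub$, that is, $\operatorname{Im}(Q)=U\Field[U]\langle b\rangle$. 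I will also record the elementary fact that a homogeneous $x\in M$ satisfies $U^nx\neq 0$ for all $n$ exactly when its image in the $U$-free part $\Field[U]\langle a\rangle\oplus\Field[U]\langle b\rangle$ is nonzero; in particular the summand $(\Field)_{(d-2)}\langle c\rangle$ contributes only $U$-torsion.

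Next I would compute $\du$. The degree-$(d-2)$ class $b$ is $U$-nontorsion, and $Ub\in\operatorname{Im}(Q)$, so $b$ realizes the value $r=d-2$ in the definition of $\du$. Because every nonzero homogeneous piece of $M$ lies in some degree $d-3-2n$ or $d-2-2n$ with $n\geq 0$ (together with the $\langle c\rangle$-summand in degree $d-2$), there is no homogeneous class of degree exceeding $d-2$; hence the relevant maximum is exactly $d-2$ and $\du(\Sigma(10_{161}),\spinc_0)=(d-2)+2=d$.

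Then I would compute $\dl$. The degree-$(d-3)$ class $a$ is $U$-nontorsion, and $U^na$ always has a nonzero $\langle a\rangle$-component, so it never lies in $\operatorname{Im}(Q)=U\Field[U]\langle b\rangle$; thus $a$ realizes $r=d-3$. To see this is the best possible, it suffices to rule out degree $d-2$: the homogeneous classes there are $b$, $c$, and $b+c$; of these $c$ is $U$-torsion, whereas $b$ and $b+c$ each satisfy $U(\,\cdot\,)=Ub\in\operatorname{Im}(Q)$, so none of them is admissible for $\dl$. Hence the maximum is $d-3$ and $\dl(\Sigma(10_{161}),\spinc_0)=(d-3)+1=d-2$.

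I do not expect any genuine obstacle: the corollary is a mechanical consequence of Lemma~\ref{lem:minus-from-hat} and the definitions of the involutive correction terms. The only points that need a little care are correctly identifying $\operatorname{Im}(Q)$ using the $U$-equivariance of $Q$, and keeping in mind that the $(\Field)_{(d-2)}\langle c\rangle$ summand is $U$-torsion, so it never supplies a $U$-nontorsion class and in particular does not push the relevant maximal degree above $d-2$.
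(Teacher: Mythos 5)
Your computation is correct and matches the paper's intent exactly: the paper states this corollary with no separate proof, treating it as an immediate unwinding of the definitions of $\dl$ and $\du$ against the module computed in Lemma~\ref{lem:minus-from-hat}, which is precisely what you do. Your identification of $\operatorname{Im}(Q)=U\cdot\Field[U]\langle b\rangle$ and the degree-by-degree check that $a$ realizes $r=d-3$ for $\dl$ and $b$ realizes $r=d-2$ for $\du$ are all accurate.
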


\begin{proof}[Proof of Lemma \ref{lem:minus-from-hat}] Let $K =
  10_{161}$. Zhan's code for computing $\HFa(\Sigma(K))$ can be used
  to compute relative gradings and $\SpinC$-structures for generators
  of $\HFa(\Sigma(K))$. Arbitrarily numbering the $\SpinC$-structures
  of the generators by $0,\dots,4$, the code finds that, up to a shift, the gradings
  of the generators representing the different $\SpinC$-structure are:
  \begin{center}
    \begin{tabular}{ccccc}
      \toprule
    $\spinc$&$\gr$&\ \ \ \  &$\spinc$&$\gr$\\
    \midrule
    3 & 6/5 & & 0 & 2/5\\ 
    3 & 6/5 & & 1 & 2/5\\
    3 & 1/5 & & 2 & 0\\
            & & & 4 & 0\\
      \bottomrule
  \end{tabular}
\end{center}

Thus, the $\SpinC$-structure labeled $3$ must be the central $\SpinC$-structure. From the computer computation, $\rank(\HFIa(\Sigma(10_{161})))=8=\rank(\HFa(\Sigma(10_{161})))+1$, so
$\iota_*$ must have exactly one fixed point, which must be the generator in relative grading $1/5$. The other two elements in this $\SpinC$-structure must, up to a change of basis, be interchanged by $\iota_*$. We conclude that $\HFa(\Sigma(K), \spinc_0)$ contains three elements, two in some grading $q$ and one in grading $q-1$, and that up to a change of basis, the two elements in grading $q$ are interchanged by $\iota_*$. 

Now, recall that there is a long exact sequence
\begin{equation} \label{eq:minus-exact}
\cdots \to \HF^-(\Sigma(K)) \xrightarrow{\cdot U} \HF^-(\Sigma(K)) \to \HFa(\Sigma(K)) \to \HF^-(\Sigma(K))\to \cdots 
\end{equation}
such that the map $\HF^-(\Sigma(K))\to \HFa(\Sigma(K))$ increases the
grading by $2$ and the map $\HFa(\Sigma(K))\to\HF^-(\Sigma(K))$
decreases the grading by $1$ \cite[Proposition
2.1]{OS04:HolDiskProperties}. This long exact sequence commutes at
every step with $\iota_*$ \cite[Proof of Proposition
4.1]{HM:involutive}. (Strictly speaking, this was proved for the analogous sequence for $\HF^+$, but the proof for $\HF^-$ is identical.) It follows from the existence of this long exact
sequence that there is a noncanonical isomorphism
$\HF^-(\Sigma(K)) \simeq \Field[U]\langle \alpha \rangle \oplus \Field
\langle \beta \rangle$, where both $\alpha$ and $\beta$ lie in grading
$q-2$. In particular, the ordinary Heegaard Floer correction term is
$d(\Sigma(K), \spinc_0) = q$. Further, the grading shifts imply that
the summand of $\HFa(\Sigma(K))$ in grading $q$ is precisely the image
of the summand of $\HF^-(\Sigma(K))$ in grading $q-2$, which is
spanned as a vector space by $\alpha$ and $\beta$. Therefore since the
long exact sequence~(\ref{eq:minus-exact}) respects the action
$\iota_*$, the involution on $\HFa(\Sigma(K))$ is determined by the
involution on $\HF^-(\Sigma(K))$. There are exactly two
$U$-equivariant involutions on $\HF^-(\Sigma(K))$: the identity and
the involution $\iota_*(\alpha) = \alpha + \beta$,
$\iota_*(\beta)=\beta$. The first of these induces the identity
involution on $\HFa(\Sigma(K))$, contradicting the computer
computation. Thus, $\iota_*(\alpha) = \alpha + \beta$,
$\iota_*(\beta)=\beta$.

Recall that there is an exact triangle 
\begin{equation}
\label{pic:exact1}
\begin{tikzpicture}[baseline=(current  bounding  box.center)]
\node(1)at(0,0){$\HFIm(Y,\spinc)$};
\node(2)at (-2,1){$\HF^-(Y, \spinc)$};
\node(3)at (2,1){$Q \cdot \HF^-(Y,\spinc)[-1]$};
\path[->](2)edge node[above]{$Q(1+ \iota_*)$}(3);
\path[->](3)edge (1);
\path[->](1)edge(2);
\end{tikzpicture}
\end{equation}
\cite[Proposition 4.6]{HM:involutive}.

Ordinarily, the existence of this triangle is insufficient to determine $\HFIm(Y, \spinc)$. (That is, $\HFIm$ is in general not a mapping cone of the map $1+\iota_*$ on $\HF^-$, unlike the hat variant.) However, in this case the complex is sufficiently small that given our computation of $\iota_*$, the mapping cone of $(1+\iota_*)$ is the unique $\Field[U,Q]/(Q^2)$-module that fits into the long exact triangle.  The map $Q(1+ \iota_*)$ takes $\alpha$ to $Q\beta$. So, $\HFIm(\Sigma(K),\spinc_0)$ is generated by $U \alpha = a$, $Q \alpha = b$, and $\beta =c$, and those elements lie in gradings $d-3$, $d-2$, and $d-2$ respectively.
\end{proof}

\begin{remark} The reader may have noticed that the complex $\HF^-(\Sigma(10_{161}), \spinc_0)$ is (after a change of basis) a symmetric graded root. Indeed, I.~Dai and C.~Manolescu recently showed that whenever $(Y, \spinc)$ is such that $\HF^-(Y, \spinc)$ is a symmetric graded root with involution given by the canonical symmetry, $\HFIm(Y, \spinc)$ is a mapping cone on $\HF^-(Y,\spinc)$ \cite[Theorem 1.1]{DM:involutive-plumbed}.
\end{remark}

\begin{remark}
  It may be interesting to compare these computations with Lin's spectral sequence from a variant of Khovanov homology to involutive monopole Floer homology of the branched double cover~\cite{Lin:involutive-Kh}.
\end{remark}

\begin{remark}
  One could call a rational homology sphere $Y$ \emph{$\HFIa$-trivial} if for each $\Spin$-structure $\spinc$ on $Y$, $\HFIa(Y,\spinc)\cong\HFa(Y,\spinc)\oplus\Field$ where $Q\cdot\HFa(Y,\spinc)=0$ and $Q$ is non-vanishing on the remaining generator. At the time of writing, no $\HFIa$-nontrivial rational homology sphere $Y$ is known.
\end{remark}



\bibliographystyle{halpha-abbrv.bst}
\bibliography{heegaardfloer}

\newpage
\newcommand{\nocontentsline}[3]{}
\newcommand{\tocless}[2]{\bgroup\let\addcontentsline=\nocontentsline#1{#2}\egroup}
\section{Corrections}
\subsection{Correction regarding the pairing theorem for triangles}
In the proof of Lemma 5.6 of~\cite{HL19:involutive}, we wrote:
\begin{quote}
  For handleslides, both horizontal maps are defined by counting holomorphic triangles, and the fact that this diagram commutes up to homotopy is a special case of the pairing theorem for triangles [LOT14a].
\end{quote}

This is not correct. For $\beta$-handleslides, the statement and proof are correct. For $\alpha$-handleslides, an additional argument is needed. As we explain below, the simplest solution is to change Lemma 5.6 to say that associated to each bordered Heegaard move, there is a map of bordered modules so that the specified square commutes. For moves other than $\alpha$-handleslides, the map of bordered modules is the one from the invariance proof of bordered Floer homology~\cite{LOT1}. For $\alpha$-handleslides, the map is constructed indirectly, by gluing on the ($\alpha,\beta$)-bordered Heegaard diagram $\AZ$, performing the handleslide, and then removing $\AZ$. We explain this in more detail below.

After substituting this new handleslide map for the usual one throughout the paper, all other results and proofs are unchanged.

\subsubsection{More details about the corrected lemma}\label{sec:second-resolution}

Let $\HD=(\Sigma,\alphas,\betas)$ be a bordered Heegaard diagram and $\HD^H=(\Sigma,\alphas^H,\betas)$
the result of handlesliding an $\alpha$-curve over an $\alpha$-circle. 
Let $\AZ$ be the Auroux-Zarev diagram, an $(\alpha,\beta)$-bordered Heegaard diagram, and let $\AZbar$ be the dual diagram, so $\AZ\cup\AZbar$ is equivalent to the identity diagram. (Any other invertible $(\alpha,\beta)$-bordered Heegaard diagram would work just as well here.) The pairing theorem induces homotopy equivalences
\[
  \CFA(\HD\cup \AZ) \simeq \CFA(\HD)\DT\CFDA(\AZ)\qquad
  \CFA(\HD^H\cup \AZ) \simeq \CFA(\HD^H)\DT\CFDA(\AZ),
\]
and similarly for $\CFD$. The diagrams $\HD\cup \AZ$ and $\HD^H\cup \AZ$ are $\beta$-bordered. Since $\CFDA(\AZ)$ is invertible, tensoring with it is an equivalence of categories. Instead of defining the handleslide invariance map from $\CFA(\HD)$ to $\CFA(\HD^H)$ by counting triangles as in~\cite{LOT1}, define it to be the unique map (up to homotopy) $f\colon \CFA(\HD)\to\CFA(\HD^H)$ so that
\[
  \begin{tikzcd}
    \CFA(\HD)\DT\CFDA(\AZ)\arrow[r,"f\DT\Id"]\arrow[d,"\simeq"] & \CFA(\HD^H)\DT\CFDA(\AZ)\arrow[d,"\simeq"]\\
    \CFA(\HD\cup\AZ)\arrow[r,"g"] & \CFA(\HD^H\cup\AZ)
  \end{tikzcd}
\]
homotopy commutes, where the bottom map $g$
is induced by counting holomorphic triangles. Here, the vertical homotopy equivalences are the ones induced by the pairing theorem. (Note that $g$ is defined by counting triangles with respect to two sets of circles and one set of arcs, since $\HD\cup\AZ$ is $\beta$-bordered.)

To verify this case of~\cite[Lemma 5.6]{HL19:involutive}, fix another bordered Heegaard diagram $\HD'$. The claim is that the diagram
\[
\begin{tikzcd}
  \CFA(\HD)\DT\CFD(\HD')\arrow[r,"f\DT\Id"]\arrow[d,"\simeq"] & \CFA(\HD^H)\DT\CFD(\HD')\arrow[d,"\simeq"]\\
  \CFa(\HD\cup\HD') \arrow[r] & \CFa(\HD^H\cup\HD')
\end{tikzcd}  
\]
homotopy commutes, where the vertical arrows are induced by the pairing theorem and the bottom arrow is Ozsv\'ath-Szab\'o's handleslide map. Consider the larger diagram
{\small
\[
\begin{tikzcd}
  \CFA(\HD)\DT\CFD(\HD')\arrow[r,"f\DT\Id"]\arrow[d,"\simeq"] & \CFA(\HD^H)\DT\CFD(\HD')\arrow[d,"\simeq"]\\
  \CFA(\HD)\DT\CFDA(\AZ)\DT\CFDA(\AZbar)\DT\CFD(\HD')\arrow[r,"f\DT\Id^3"] \arrow[d,"\simeq"]& 
  \CFA(\HD^H)\DT\CFDA(\AZ)\DT\CFDA(\AZbar)\DT\CFD(\HD')\arrow[d,"\simeq"]\\
  \CFA(\HD\cup\AZ)\DT\CFDA(\AZbar\cup\HD')\arrow[r,"g\DT\Id"] \arrow[d,"\simeq"]& 
  \CFA(\HD^H\cup\AZ)\DT\CFDA(\AZbar\cup\HD')\arrow[d,"\simeq"]\\
  \CFa(\HD\cup\AZ\cup\AZbar\cup\HD')\arrow[r] \arrow[d,"\simeq"]& 
  \CFa(\HD^H\cup\AZ\cup\AZbar\cup\HD')\arrow[d,"\simeq"]\\
  \CFa(\HD\cup\HD') \arrow[r] & \CFa(\HD^H\cup\HD').
\end{tikzcd}  
\]}Here, the top vertical arrows are induced by a homotopy equivalence between $\CFDA(\AZ)\DT\CFDA(\AZbar)$ and the identity bimodule. (This homotopy equivalence is, in fact, unique up to homotopy~\cite[Corollary 4.6]{HL19:involutive}.) The bottom vertical arrows are induced by a sequence of Heegaard moves (from $\AZ\cup\AZbar$ to the identity diagram followed by some destabilizations). The other vertical arrows come from the pairing theorem, or are identity maps. The bottom two horizontal arrows are Ozsv\'ath-Szab\'o's handleslide maps.

The top square homotopy commutes because $\DT$ is, up to homotopy, a bifunctor~\cite[Lemma 2.3.13]{LOT2}.
The second square commutes from the definition of $g$. The third square homotopy commutes by the pairing theorem for triangles (the version proved in~\cite{LOT:DCov2}). The bottom square homotopy commutes by naturality of $\CFa$~\cite{OS06:HolDiskFour,JT:Naturality}: the two ways around the square correspond to doing a handleslide and then a sequence of Heegaard moves in a different part of the diagram. This proves the result.

\tocless\subsubsection*{Acknowledgements.} We thank Ian Zemke for pointing out this mistake and for helpful comments on this note.

\subsection{Correction regarding certain commutative squares}

The proof of Theorem 7.1 of~\cite{HL19:involutive} begins with a large diagram. Regarding this diagram, we wrote: ``the fact that the top two rows commute on the nose follows from basic properties of
the box tensor product~\cite[Lemma 2.3.3]{LOT2}.''

Both rows are instances of the commutative diagram in~\cite[Case (2) of Lemma 2.3.3]{LOT2}, or rather its generalization to bimodules (mentioned slightly later in~\cite[Section 2.3.2]{LOT2}). In general, that diagram commutes only up to homotopy, so this claim needs further justification, and in fact is not correct without a further assumption.
 
The second row does commute identically. The homotopy for the left square of the second row is given graphically by 
\tikzset{taa/.style={double, double equal sign distance, -implies}} 
\[
\begin{tikzpicture}
  \node at (0,0) (tc) {};
  \node at (-2,0) (tl) {};
  \node at (2,0) (tr) {};
  \node at (2,-2) (delta1) {$\delta$};
  \node at (2,-3) (phi) {$\phi^1$};
  \node at (2,-4) (delta2) {$\delta$};
  \node at (2,-7) (br) {};
  \node at (0,-5) (middelta) {$\delta$};
  \node at (-2,-6) (Psi) {$\Psi_0$}; 
  \node at (-2,-7) (bl) {};
  \node at (0,-7) (bc) {};
  \draw[->] (tl) to node[below,sloped]{\tiny$\CFA(X(K))\DT\CFDA(\AZbar)$} (Psi);
  \draw[->] (Psi) to (bl);
  \draw[->] (tc) to node[below,sloped]{\tiny$\CFDA(\AZ)$} (middelta);
  \draw[->] (middelta) to (bc);
  \draw[->] (tr) to node[below,sloped]{\tiny$\CFD(\mathcal{H}_\infty)$} (delta1);
  \draw[->] (delta1) to (phi);
  \draw[->] (phi) to (delta2);
  \draw[->] (delta2) to (br);
  \draw[taa] (delta1) to (middelta);
  \draw[taa] (middelta) to (Psi);
  \draw[taa] (delta2) to (middelta);
  \draw[->] (phi) to (middelta);
\end{tikzpicture}.
\]
From the form of $\CFDA(\AZ)$, this homotopy vanishes: the operation $\delta^1$ on $\CFDA(\AZ)$ with at least one algebra input outputs an idempotent and with two or more algebra inputs vanishes, so the input to $\Psi_0$ in the diagram has at least one idempotent, and thus the output of $\Psi_0$ vanishes. The right square is similar.
 
The first row does not necessarily commute. Below, we give two ways to resolve the problem and obtain the stated result.

Before doing so, we additionally note that, in the bottom row of that commutative diagram, there is a missing associator homotopy. We explain this for the left square; the right square is similar. The map $G$ is a homotopy between $\Psi_1\circ(\Id\DT\phi)$ and $\phi\circ \Psi_1$. However, $(\Id\DT\Psi_1)\circ(\Id^2\DT\phi)$ is not equal to $\Id\DT(\Psi_1\circ(\Id\DT\phi))$, but rather homotopic to it: the homotopy is given by 
\[
\begin{tikzpicture}
\node at (-1,1) (tl) {};
\node at (1,1) (tr) {};
\node at (1,0) (delta0) {$\delta$};
\node at (1,-1) (phi) {$\Id\DT\phi$};
\node at (1,-2) (delta1) {$\delta$};
\node at (1,-3) (Psi) {$\Psi_1$};
\node at (1,-4) (delta2) {$\delta$};
\node at (-1,-5) (mu) {$\mu$};
\node at (-1,-6) (bl) {};
\node at (1,-6) (br) {};
\draw[->] (tr) to (delta0);
\draw[->] (delta0) to (phi);
\draw[->] (phi) to (delta1);
\draw[->] (delta1) to (Psi);
\draw[->] (Psi) to (delta2);
\draw[->] (delta2) to (br);
\draw[->] (tl) to (mu);
\draw[->] (mu) to (bl);
\draw[->] (phi) to (mu);
\draw[->] (Psi) to (mu);
\draw[taa, bend right=15] (delta0) to (mu);
\draw[taa] (delta1) to (mu);
\draw[taa] (delta2) to (mu);
\end{tikzpicture}.
\]
(See~\cite[Lemma 2.3.3(1)]{LOT2}.)
Similarly, $(\Id\DT\phi)\circ(\Id\DT\Psi_1)$ and $\Id\DT(\phi\circ\Psi_1)$ are not equal, but merely homotopic. The arrow labeled $\Id\DT G$ should include both of these homotopies as terms.
(The analogous homotopies for earlier rows of the diagram vanish because $\CFDA(\AZ)$ has no operations with more than one algebra input.)

For reference later, call these new homotopies $A$ and $B$ for the left square, and $C$ and $D$ for the right one.

\subsubsection{Option 1: making everything strict}
We can arrange that the first row commutes by choosing appropriate representatives. The row is, in fact, a composition of two rows, the first of which introduces a copy of $[\Id]$ in the middle and the second of which applies the map $\Omega\co [\Id] \to \CFDA(\AZbar)\DT\CFDA(\AZbar)$, i.e., 
{\scriptsize
\begin{equation}\label{eq:2-rows}
\begin{tikzcd}
  \CFA(X(K))\DT\CFD(\HD_\infty)\arrow[r,"\Id\DT\phi"]\arrow[d] & 
  \CFA(X(K))\DT\CFD(\HD_{-1})\arrow[r,"\Id\DT\psi"]\arrow[d] &
  \CFA(X(K))\DT\CFD(\HD_0)\arrow[d]\\
  \CFA(X(K))\DT[\Id]\DT\CFD(\HD_\infty)\arrow[r,"\Id^2\DT\phi"]\arrow[d,"\Id\DT\Omega\DT\Id"] & 
  \CFA(X(K))\DT[\Id]\DT\CFD(\HD_{-1})\arrow[r,"\Id^2\DT\psi"]\arrow[d,"\Id\DT\Omega\DT\Id"] &
  \CFA(X(K))\DT[\Id]\DT\CFD(\HD_0)\arrow[d,"\Id\DT\Omega\DT\Id"]\\
  \begin{matrix}\CFA(X(K))\DT\CFDA(\AZbar)\\\DT\CFDA(\AZ)\DT\CFD(\HD_\infty)\end{matrix}\arrow[r,"\Id^3\DT\phi"]\arrow[d] & 
  \begin{matrix}\CFA(X(K))\DT\CFDA(\AZbar)\\\DT\CFDA(\AZ)\DT\CFD(\HD_{-1})\end{matrix}\arrow[r,"\Id^3\DT\psi"]\arrow[d] &
  \begin{matrix}\CFA(X(K))\DT\CFDA(\AZbar)\\\DT\CFDA(\AZ)\DT\CFD(\HD_0)\end{matrix}\arrow[d]\\
  \vdots & \vdots & \vdots
\end{tikzcd}
\end{equation}
}

The first of these rows commutes identically. For the second, the homotopy for the left square is given by
\begin{equation}\label{eq:the-htpy}
\vcenter{\hbox{
  \begin{tikzpicture}
  \node at (0,0) (tl) {};
  \node at (2,0) (tr) {};
  \node at (2,-2) (delta1) {$\delta$};
  \node at (2,-3) (phi) {$\phi^1$};
  \node at (2,-4) (delta2) {$\delta$};
  \node at (0,-5) (Omega) {$\Id\DT\Omega$};
  \node at (0,-6) (bl) {};
  \node at (2,-6) (br) {};
  \draw[taa] (delta1) to (Omega);
  \draw[taa] (delta2) to (Omega);
  \draw[->] (phi) to (Omega);
  \draw[->] (tl) to node[below,sloped]{\tiny$\CFA(X(K))\DT[\Id]$} (Omega);
  \draw[->] (Omega) to (bl);
  \draw[->] (tr) to node[below,sloped]{\tiny$\CFD(\HD_\infty)$} (delta1);
  \draw[->] (delta1) to (phi);
  \draw[->] (phi) to (delta2);
  \draw[->] (delta2) to (br);
\end{tikzpicture}}}.
\end{equation}
The right square is again similar.
 
Suppose we have chosen $\CFA(X(K))=\CFA(X(K))\DT[\Id]$ to be a projective \dg module, rather than a more general $\Ainf$-module. (For instance, we could take $\CFA(X(K))$ to be the modulification of $\CFD(X(K))$.) Then
$\CFA(X(K))\DT\CFDA(\AZbar)\DT\CFDA(\AZ)\cong \CFA(X(K))\DT\CFDD(\AZbar)\DT\CFAA(\AZ)=\CFA(X(K))\DT\CFDD(\AZbar)\DT \Alg(\PMC)$
is also a projective \dg module. So, up to homotopy, $\Id\DT\Omega$ is an honest \dg module map $\Omega'$ (by~\cite[Proposition 2.4.1]{LOT2}). If we use $\Omega'$ in place of $\Omega$, the homotopy vanishes, and these squares commute. The composition of each column is still $\iota$, which was only well-defined up to homotopy, and the rest of the proof is unchanged.

\subsubsection{Option 2: making everything coherent}
Consider the first row of Formula~\eqref{eq:2-rows}. As noted above, there are homotopies making this diagram homotopy commute; the homotopy for the first square is given in Formula~\eqref{eq:the-htpy} and the second homotopy is similar. Denote these homotopies by $K$ and $L$, respectively. The compositions $(\Id^3\DT\psi)\circ K$ and $L\circ(\Id^2\DT\phi)$ are also homotopic, via the homotopy $M$ given by

\[
  \begin{tikzpicture}
    \node at (0,0) (tl) {};
    \node at (2,0) (tr) {};
    \node at (2,-2) (delta1) {$\delta$};
    \node at (2,-3) (phi) {$\phi^1$};
    \node at (2,-4) (delta2) {$\delta$};
    \node at (2,-5) (psi) {$\psi^1$};
    \node at (2,-6) (delta3) {$\delta$};
    \node at (0,-7) (Omega) {$\Id\DT\Omega$};
    \node at (0,-8) (bl) {};
    \node at (2,-8) (br) {};
    \draw[taa] (delta1) to (Omega);
    \draw[taa] (delta2) to (Omega);
    \draw[taa] (delta3) to (Omega);
    \draw[->] (phi) to (Omega);
    \draw[->] (psi) to (Omega);
    \draw[->] (tl) to node[below,sloped]{\tiny$\CFA(X(K))\DT[\Id]$} (Omega);
    \draw[->] (Omega) to (bl);
    \draw[->] (tr) to node[below,sloped]{\tiny$\CFD(\HD_\infty)$} (delta1);
    \draw[->] (delta1) to (phi);
    \draw[->] (phi) to (delta2);
    \draw[->] (delta2) to (psi);
    \draw[->] (psi) to (delta3);
    \draw[->] (delta3) to (br);
  \end{tikzpicture}
\]

That is, we have a diagram of the form

{\footnotesize
\[
\begin{tikzcd}[row sep=4em]
  \CFA(X(K))\DT[\Id]\DT\CFD(\HD_\infty)\arrow[r,"\Id^2\DT\phi"]\arrow[d,"\Id\DT\Omega\DT\Id"]\arrow[dr,dashed,"K"]\arrow[drr,dotted,"M", near start] & 
  \CFA(X(K))\DT[\Id]\DT\CFD(\HD_{-1})\arrow[r,"\Id^2\DT\psi"]\arrow[d,"\Id\DT\Omega\DT\Id"]\arrow[dr,dashed,"L"] &
  \CFA(X(K))\DT[\Id]\DT\CFD(\HD_0)\arrow[d,"\Id\DT\Omega\DT\Id"]\\
  \begin{matrix}\CFA(X(K))\DT\CFDA(\AZbar)\\\DT\CFDA(\AZ)\DT\CFD(\HD_\infty)\end{matrix}\arrow[r,"\Id^3\DT\phi"] & 
  \begin{matrix}\CFA(X(K))\DT\CFDA(\AZbar)\\\DT\CFDA(\AZ)\DT\CFD(\HD_{-1})\end{matrix}\arrow[r,"\Id^3\DT\psi"] &
  \begin{matrix}\CFA(X(K))\DT\CFDA(\AZbar)\\\DT\CFDA(\AZ)\DT\CFD(\HD_0)\end{matrix}
\end{tikzcd}  
\]}
where the maps satisfy
\begin{align}
  \partial\circ K + K\circ \partial &= (\Id\DT\Omega\DT\Id)\circ (\Id^2\DT\phi)+(\Id^3\DT\phi)\circ(\Id\DT\Omega\DT\Id)\label{eq:K-htpy}\\
  \partial\circ L + L\circ \partial &= (\Id\DT\Omega\DT\Id)\circ (\Id^2\DT\psi)+(\Id^3\DT\psi)\circ(\Id\DT\Omega\DT\Id)\\
  \partial\circ M + M\circ \partial &= (\Id^3\DT\psi)\circ K + L\circ (\Id^2\DT\phi).\label{eq:M-htpy}
\end{align}

We continue the proof of Theorem 7.1, keeping track of these maps. Composing the maps in each column of the diagram at the diagram at the beginning of the proof of Theorem 7.1 gives Diagram (7.3), which we now draw as:
\[
(7.3')\qquad
\begin{tikzcd}[row sep=4em]
  0 \arrow[r] & \CFa(Y) \arrow[r,"i"]\arrow[d,"\Id+\iota"]\arrow[dr,dashed,"G'"] \arrow[drr,dotted,"M'",near start]& \CFa(Y_{-1}(K))\arrow[r,"p"]\arrow[d,"\Id+\iota", near start]\arrow[dr,"H'",dashed] & \CFa(Y_0(K))\arrow[r]\arrow[d,"\Id+\iota"] & 0\\
  0 \arrow[r] & \CFa(Y) \arrow[r,"i"] & \CFa(Y_{-1}(K))\arrow[r,"p"] & \CFa(Y_0(K))\arrow[r] & 0
\end{tikzcd}
\]
The meaning of $G'$ and $H'$ has changed, and there is a new map $M'$. Now,
\begin{align*}
  G' &= \bigl((\Id\DT G)+A+B\bigr)\circ (\Psi_0\DT\Id^2)\circ\Omega + (\Id\DT\Psi_1)\circ (\Psi_0\DT\Id^2)\circ K \\
  H' &= \bigl((\Id\DT H)+C+D\bigr)\circ (\Psi_0\DT\Id^2)\circ\Omega + (\Id\DT\Psi_1)\circ (\Psi_0\DT\Id^2)\circ L\\
  M' &= \bigl((\Id\DT H)+C+D\bigr)\circ (\Psi_0\DT\Id^2)\circ K + (\Id\DT\Psi_1)\circ (\Psi_0\DT\Id^2)\circ M\\
  &\qquad\qquad+(E+F)\circ(\Psi_0\DT\Id^2)\circ\Omega.
\end{align*}
Here, $E$ is defined similarly to $A$ and $C$, but for $\Id\DT(\Psi_1\circ(\Id\DT\psi)\circ(\Id\DT\phi))$ versus $(\Id\DT\Phi_1)\circ(\Id^2\DT\psi)\circ(\Id^2\DT\phi)$, and $F$ is similar to $B$ and $D$, but for $\Id\DT(\psi\circ\phi\circ\Psi_1)$ versus $(\Id\DT\psi)\circ(\Id\DT\phi)\circ(\Id\DT\Psi_1)$.
This diagram commutes coherently, in the sense that, if we let $d$ denote the sum of the internal differential at each vertex and all the maps associated to arrows in the diagram, then $d^2=0$. For instance, considering the component of $d^2$ from the top-left $\CFa(Y)$ to the bottom-right $\CFa(Y_0(K))$, this encodes the relation that $\partial\circ M'+M'\circ\partial+H'\circ i +p\circ G'=0$, which in turn follows from the relation 
\[
\bigl((\Id\DT H)+C+D\bigr)\circ(\Id^2\DT\phi)+(\Id\DT\psi)\circ\bigl((\Id\DT G)+A+B\bigr)+\partial\circ (E+F)+(E+F)\circ\partial=0
\]
and Formulas~\eqref{eq:K-htpy} and~\eqref{eq:M-htpy} above.

Replacing each column by its mapping cone, Formula (7.3$'$) exhibits the involutive Floer complex of $Y$ as the mapping cone of a map from the involutive Floer complex of $Y_{-1}(K)$ to the involutive Floer complex of $Y_0(K)$. This implies the result.

\tocless\subsubsection*{Acknowledgements.} We thank Akram Alishahi for pointing out this mistake and for helpful comments on this note.



\end{document}